\newtheorem{satz}{Theorem}
\newtheorem{proposition}[satz]{Proposition}
\newtheorem{theorem}[satz]{Theorem}
\newtheorem{lemma}[satz]{Lemma}
\newtheorem{corollary}[satz]{Corollary}
\newtheorem{remark}[satz]{Remark}
\newtheorem{example}[satz]{Example}
\def\eps{\varepsilon}
\def\_phi{\varphi}
\def\a{\alpha}
\def\d{\delta}
\def\la{\lambda}
\def\F{{\mathbb F}}
\def\o{\omega}
\def\ov{\overline}
\def\C{{\mathbb C}}
\def\R{{\mathbb R}}
\def\E{\mathsf {E}}
\def\T{{\mathbb T}}
\def\Z_N{{\mathbb Z}_N}
\def\Z{{\mathbb Z}}
\def\f{{\mathbb F}}
\def\Gr{{\mathbf G}}
\def\D{{\mathbb D}}
\def\supp{{\rm supp\,}}
\def\tr{{\rm tr\,}}
\def\FF{\widehat}
\def\D{\Delta}
\def\T{\mathsf {T}}
\def\SL{{\rm SL\,}}
\author{Shkredov I.D.}
\title{Modular hyperbolas and bilinear forms of Kloosterman sums
	\footnote{This work is supported by the Russian Science Foundation under grant 19--11--00001.}
	%\newline
	%{\bf Keywords} : Gowers norms, linear equations.
	%\newline
	%MSC 2000 : 11B75, 11B99.}
}
\date{}
\begin{document}
	\maketitle

	\begin{center}
		Annotation.
	\end{center}

	{\it \small
		In this paper we study incidences for hyperbolas in $\F_p$ and show how linear sum--product  methods work for such curves. 
		As an application we give a purely combinatorial proof of 
		%obtain 
		a nontrivial upper bound for 
		%symmetric 
		bilinear forms of Kloosterman sums.
		% in the range from $(\log p)^C$ to $p^{1/21}$. 
	}
	\\
	%\\
	%\\

	\section{Introduction}
	\label{sec:introduction}

	Let $p$ be an odd prime number, and $\F_p$ be the finite field.
	Given two sets $A,B\subset \F_p$, define the  \textit{sumset}, the \textit{product set} and the \textit{quotient set} of $A$ and $B$ as 
	$$A+B:=\{a+b ~:~ a\in{A},\,b\in{B}\}\,,$$
	$$AB:=\{ab ~:~ a\in{A},\,b\in{B}\}\,,$$
	and 
	$$A/B:=\{a/b ~:~ a\in{A},\,b\in{B},\,b\neq0\}\,,$$
	correspondingly.
	This paper is devoted to the so--called {\it sum--product phenomenon},
	%over 
	%in 
	%The sum--product phenomenon asserts 
	which says that either the sumset or the product set of a set must be large up to some natural algebraic constrains.  
	One of 
	%the most difficult and 
	the strongest form of this principle is the Erd\H{o}s--Szemer\'{e}di  conjecture \cite{ES},
	which says that for any sufficiently large set $A$ of reals and an arbitrary $\epsilon>0$ one has
	\begin{equation*}\label{f:sum-product_ES}
	\max{\{|A+A|,|AA|\}}\gg{|A|^{2-\epsilon}} \,.
	\end{equation*}
	The best up to date results in the direction 
	can be found in \cite{Shakan}
	%\cite{soly}, \cite{KS1}, \cite{KS2}, \cite{RSS} 
	and in 
	%\cite{AMRS}, \cite{RRS}  
	\cite{RSS} 
	for  $\R$ and $\F_p$, respectively. 
	Basically, in this 
	paper we restrict ourselves to the case of the finite fields only.

	It was Elekes \cite{Elekes2} who realised that the sum--product phenomenon is connected with Incidence Geometry.
	%The main subject of the Incidence Geometry is a collection of 
	Incidence Geometry deals with the incidences among basic geometrical objects such as points, lines, curves, surfaces and so on. 
	After Elekes various results on incidences of different types in $\R$ were obtained by many authors (see, e.g., \cite{TV}). 
	%many authors obtained
	Nevertheless, in $\F_p$ only {\it linear} incidences, i.e., incidences between linear objects as points/planes, points/lines, lines/lines were obtained 
	see, e.g., \cite{Rudnev_pp}, \cite{SdZ}, \cite{Zeeuw}. 
	A remarkable exception is the case of so--called $\SL_2 (\F_p)$--hyperbolas and this exception 
	%an approach which 
	was suggested by Bourgain \cite{B_hyp} who gives, in particular, the first nontrivial upper bound for cardinality of the following set
	\begin{equation}\label{f:hyp}
	\{ (a+b) (c+d) = \lambda ~:~ a\in A,\, b\in B,\, c\in C,\, d\in D \} 
	\end{equation}
	for any $\la \neq 0$ and arbitrary sets $A,B,C,D \subseteq \F_p$. 
	The importance of hyperbolas in Additive Combinatorics and Number Theory was discussed in \cite{Shp_hyp}. 
	Bourgain's approach was connected with the group actions (the importance of the group actions in Additive Combinatorics was realized by Elekes as well, see \cite{Elekes1}, \cite{Elekes3}) and it was based on Helgott's result on growth  in  $\SL_2 (\F_p)$, see  \cite{HH}, \cite{HaH} and on some additional considerations \cite{BG}.
	% as well as . 

	In this paper we 
	% continue 
	obtain a series of new upper bounds for cardinality of the set from \eqref{f:hyp}.
	Here are two of our results (other results can be found in Sections \ref{sec:hyp_first}, \ref{sec:hyp_second}, see, e.g., Theorem \ref{t:B_progr_new} below).  
	
	\begin{theorem}
		Let $A,B,C,D\subseteq \F_p$ be sets.
		Then for any $\la \neq 0$, one has 
		\[
		|\{ (a+b) (c+d) = \lambda ~:~ a\in A,\, b\in B,\, c\in C,\, d\in D \}| - \frac{|A||B||C||D|}{p} 
		\lesssim 
		\]
		\begin{equation}\label{f:hyp_incidences_intr}
		\lesssim |A|^{1/4} |B||C| |D|^{1/2} + |A|^{3/4} (|B||C|)^{41/48} |D|^{1/2}   \,.
		\end{equation}
		\label{t:hyp_incidences_intr}
	\end{theorem}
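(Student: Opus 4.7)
The plan is to reinterpret
\[
T := |\{(a,b,c,d) \in A \times B \times C \times D : (a+b)(c+d) = \la\}|
\]
as a point--hyperbola incidence count in $\F_p^2$ and then apply an asymmetric Cauchy--Schwarz reduction, feeding the result into the incidence machinery for modular hyperbolas developed earlier in the paper. Writing $u = a + b$, $v = c + d$, one has
\[
T = \sum_{u \neq 0} r_{A+B}(u)\, r_{C+D}(\la/u),
\]
where $r_{X+Y}(z) := |\{(x,y) \in X \times Y : x + y = z\}|$; the main term $|A||B||C||D|/p$ arises from replacing each representation function by its uniform average on $\F_p$. Equivalently, each pair $(a,c) \in A \times C$ yields a translate $\mathcal{H}_{a,c} := \{(x,y) : (x+a)(y+c) = \la\} \subset \F_p^2$ of the model hyperbola, and $T$ is the incidence count between the Cartesian point set $B \times D$ and the family $\{\mathcal{H}_{a,c}\}_{(a,c) \in A \times C}$.

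The asymmetry of~\eqref{f:hyp_incidences_intr} between $(A, D)$ and $(B, C)$ suggests treating the two Cartesian directions separately. Applying Cauchy--Schwarz over $d \in D$ produces the factor $|D|^{1/2}$ and reduces the problem, after the main term is isolated, to the count of sextuples $(a_1, b_1, c_1, a_2, b_2, c_2) \in (A \times B \times C)^2$ satisfying
\[
\frac{\la}{a_1 + b_1} - c_1 = \frac{\la}{a_2 + b_2} - c_2 .
\]
This is an additive energy of the map $(a, b, c) \mapsto \la/(a + b) - c$; after the change of variables $u_i = a_i + b_i$, it becomes an energy of the (weighted) set $\la/(A + B)$ shifted against $C$.

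To produce the two terms in~\eqref{f:hyp_incidences_intr}, this energy is controlled in two ways and the results are combined. A direct estimate, trivially bounding the energy by $|A|^{1/2}(|B||C|)^2$ after a further Cauchy--Schwarz split in the $A$ variable, yields the first summand $|A|^{1/4}|B||C||D|^{1/2}$. The nontrivial estimate uses a point--line incidence bound of Stevens--de Zeeuw type~\cite{SdZ} (or a sharpened version from the earlier sections of the paper) applied to the multiplicatively structured set $\la/(A + B)$ together with lines parameterised by $C$; optimizing over the threshold between popular and unpopular level sets of the map $(a,b)\mapsto \la/(a+b)$ produces the second summand with the exponent $41/48$.

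The main obstacle will be twofold. First, the careful bookkeeping of the main term $|A||B||C||D|/p$ through the Cauchy--Schwarz step: the diagonal (``constant'') part of the energy matches $|A||B||C||D|/p$ exactly and must be subtracted before the incidence input is applied, leaving only the genuine off-diagonal fluctuation. Second, extracting the sharp exponent $41/48$ depends on the correct choice of which Cartesian factor plays the role of the points and which the role of the lines in the incidence theorem, and on the precise popularity decomposition used; a suboptimal choice visibly weakens the exponent.
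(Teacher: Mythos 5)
Your opening reduction is sound and in fact coincides with the paper's setup: counting solutions of $(a+b)(c+d)=\la$ is recast as counting incidences $gx=y$ between points and a family of M\"obius--type transformations $g\in G_\la(-B,C)$ parameterised by $B\times C$, and a Cauchy--Schwarz in the $D$--variable (with the balanced function inserted first, as you correctly note) is exactly the $k=1$ case of Lemma \ref{l:actions}. The gap is in what happens next. A single Cauchy--Schwarz leaves you with a \emph{second} moment, namely $\sum_g r_{GG^{-1}}(g)\,|A\cap gA|$ (your sextuple count), and this quantity cannot yield either term of \eqref{f:hyp_incidences_intr}. Concretely: the diagonal $(a_1,b_1,c_1)=(a_2,b_2,c_2)$ already contributes $|A||B||C|$ to your energy $E$, so your claimed ``trivial'' bound $E\lesssim |A|^{1/2}(|B||C|)^2$ is false whenever $|A|>(|B||C|)^2$, and in that regime $|D|^{1/2}E^{1/2}\ge |D|^{1/2}(|A||B||C|)^{1/2} > |A|^{1/4}|B||C||D|^{1/2}$; the first term of the theorem is therefore unreachable by one application of Cauchy--Schwarz in $D$. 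In the paper the exponent $|A|^{1/4}$ arises only after a \emph{second} Cauchy--Schwarz (the $k=2$ case of Lemma \ref{l:actions}), which produces $\sigma_*^4\le |D|^2|A|\sum_g r_{(GG^{-1})^2}(g)\,w(g)$ and hence a fourth root of the prefactor $|A|$.

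The second term is likewise out of reach of your plan. After the second Cauchy--Schwarz the paper runs a pigeonhole over the level sets $S_\tau$ of $w(g)=|A\cap gA|$, bounds $|S_\tau|\ll |A|^6/\tau^3$ by the $3$-transitivity estimate of Lemma \ref{l:k-transitive}, and uses the submultiplicativity of $\T_k$ (Lemma \ref{l:T_2^k}) to isolate the \emph{third} moment $\T_3(G)$ of the transformation set; this is then controlled by Lemma \ref{l:T_3(G)} together with the point--plane (Rudnev-type) bound $\sum_x r^2_{(B-B)(C-C)}(x)\lesssim (|B||C|)^{5/2}\E^{+}(B,C)^{1/2}$ of Lemma \ref{l:D_2}. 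The exponent $41/48$ is exactly $19/24+\tfrac{1}{24}\cdot\tfrac{3}{2}$ coming from the trivial bound $\E^{+}(B,C)\le(|B||C|)^{3/2}$ inserted into Theorem \ref{t:hyp_incidences}; it does not come from a Stevens--de Zeeuw point--line bound applied to $\la/(A+B)$ against $C$, and no choice of popularity decomposition at the second-moment level will produce it. What your route does recover (essentially) is the \emph{first} bound of Theorem \ref{t:hyp_incidences}, the one involving $\E^{+}(B)$ and $\E^{+}(C)$, which with trivial energy bounds carries the factor $|A|^1$ rather than $|A|^{3/4}$. To repair the proposal you need the additional Cauchy--Schwarz step and the third-moment/3-transitivity machinery; these are not optional refinements but the source of both exponents in the statement.
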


	The Theorem above allows to obtain a uniform upper bound for size of hyperbola with elements from a set with the small sumset.

	\begin{corollary}
		Let $A\subseteq \F_p$ be a set.
		Suppose that $|A+A| \ll |A|$ and $|A| \ll p^{13/23}$. 
		Then for any $\la \neq 0$, one has 
		\begin{equation}\label{f:r_AA'_intr}
		|\{ a_1 a_2 = \la ~:~ a_1,a_2 \in A \}| \lesssim  
		%\frac{|A|^2}{p} + 
		|A|^{149/156} \,.
		\end{equation}
		\label{c:r_AA_intr}
	\end{corollary}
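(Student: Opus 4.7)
\proof
The plan is to bound $r_{AA}(\lambda) = |\{(a_1, a_2) \in A^2 : a_1 a_2 = \lambda\}|$ by applying Theorem~\ref{t:hyp_incidences_intr} to a judicious quadruple of sets, so that each multiplicative pair on the hyperbola is amplified by many uniform additive representations coming from the small-sumset assumption, and then inverting.

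First, Plünnecke--Ruzsa turns the hypothesis $|A+A|\ll|A|$ into $|A-A|\ll|A|$. Writing $n:=|A|$, I will apply Theorem~\ref{t:hyp_incidences_intr} to the quadruple $(\mathcal A,\mathcal B,\mathcal C,\mathcal D)=(A,\,A-A,\,A,\,A-A)$. The key observation supplying the lower bound is uniform representation: for any $a\in A$ and any $a'\in A$, the difference $a-a'$ lies in $A-A$, so each $a\in A$ admits at least $n$ representations of the form $a=a'+b$ with $a'\in A$ and $b\in A-A$. Hence each pair $(a_1,a_2)\in A^2$ with $a_1a_2=\lambda$ lifts to at least $n^2$ quadruples $(a,b,c,d)$ in $A\times(A-A)\times A\times(A-A)$ with $(a+b)(c+d)=\lambda$, so
\[
r_{AA}(\lambda)\cdot n^2 \;\leq\; |\{(a+b)(c+d)=\lambda : a,c\in A,\ b,d\in A-A\}|.
\]

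On the right-hand side, Theorem~\ref{t:hyp_incidences_intr} produces a $p$-term of order $n^4/p$ (absorbed by the hypothesis $n\ll p^{13/23}$) plus the main term $n^{3/4}(|A-A|\cdot n)^{41/48}|A-A|^{1/2}$, which under $|A-A|\ll n$ is $\lesssim n^{71/24}$. Dividing through by $n^2$ gives a first bound $r_{AA}(\lambda)\lesssim n^{23/24}$, close to but slightly weaker than the claimed $n^{149/156}$.

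The hard part is then the final refinement from $n^{23/24}$ to $n^{149/156}$, a saving of $n^{1/312}$. To extract this one cannot afford the wasteful choice $\mathcal B=\mathcal D=A-A$; instead $\mathcal B$ must be replaced by a more carefully engineered subset of $A-A$ of smaller cardinality, chosen so as to still guarantee enough additive representations of each $a\in A$ (possibly only on average, combined with a popular-element pigeonhole) while making the two terms of the bound in Theorem~\ref{t:hyp_incidences_intr} balance optimally against the resulting weighted lower bound. This is where the bulk of the technical work sits; equivalently, the improvement can be obtained by feeding this setup into the sharper incidence bound of Theorem~\ref{t:B_progr_new} in Section~\ref{sec:hyp_second}, with the $p$-term still under control because $n\ll p^{13/23}$ is exactly the threshold at which $n^4/p\ll n^{149/156+2}$.
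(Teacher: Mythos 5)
Your reduction to the incidence theorem is sound and your first computation is correct: lifting each solution of $a_1a_2=\la$ to $\gtrsim |A|^2$ quadruples (the paper does the same with the quadruple $(A+A,\,-A,\,-A,\,A+A)$ rather than your $(A,\,A-A,\,A,\,A-A)$, which is an immaterial difference) and applying the black-box bound \eqref{f:hyp_incidences_intr} yields $r_{AA}(\la)\lesssim_K |A|^{23/24}$. But the passage from $23/24$ to $149/156$ — the actual content of the corollary — is missing, and the direction you sketch for it is not the right one. The saving does not come from shrinking $\mathcal B$ to an engineered subset of $A-A$, nor from Theorem~\ref{t:B_progr_new} (which requires $B,C$ to be subsets of an interval $[N]$ with $N\le p^{\tau}$, $\tau<1/8$, and does not apply to a general set of small doubling). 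It comes from opening up the proof of Theorem~\ref{t:hyp_incidences}: the exponent $41/48$ there arises from inserting the trivial bound $\E^{+}(B,C)\le(|B||C|)^{3/2}$ into \eqref{tmp:12.04_3}--\eqref{tmp:06.04_0}, and for $B=C=-A$ one instead keeps $\T_3(G)\le |A|^2\sum_x r^2_{(A-A)(A-A)}(x)$, bounds $\sum_x r^2_{(A-A)(A-A)}(x)\le |A|^4\,\E^{\times}(A-A)$, and then invokes the sum--product estimate of Lemma~\ref{l:E_times_A} applied to $A-A$ (which has bounded doubling by Pl\"unnecke), giving $\E^{\times}(A-A)\ll_K |A|^{32/13}$. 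Feeding this into \eqref{tmp:12.04_3} produces the exponent $\tfrac54-\tfrac56+\tfrac{1}{12}\bigl(4+\tfrac{32}{13}\bigr)=\tfrac{149}{156}$.

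You also misread the role of the hypothesis $|A|\ll p^{13/23}$. The $p$-term $|A|^4/p$ is harmless under a far weaker condition; the constraint $|A|\ll p^{13/23}$ is exactly what is needed to make Lemma~\ref{l:E_times_A} applicable to the set $A-A$ (the paper's condition $|A-A|^{92}\le p^{52}$). Without identifying the multiplicative-energy step, there is no place in your argument where this hypothesis is genuinely used, which is a sign that the key idea has not been located.
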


	Another rather unusual result (for example, the proof uses the fact that the group $\SL_2 (\Z)$ contains free subgroups) on incidences \eqref{f:hyp} is the following (an analogue of this statement in $\F_p$ is our  Theorem \ref{t:B_progr_new} from Section \ref{sec:hyp_second}). 
	
	\begin{theorem}
		Let $A,D \subset \R$, $B\subset \Z$  be sets, and $\la \neq 0$ be any number.
		Then 
		%	\begin{equation}\label{f:A_abs_Re_Z}
		\[
		|\{ (a+b) (c+d) = \lambda ~:~ a\in A,\, d\in D,\, b,c \in B \}|
		%	\ll_{|\la|}
		\] 
		\begin{equation}\label{f:A_abs_Re_Z_intr}
		\ll_{|\la|} 
		\sqrt{|A||D|} |B|^2 \cdot \max\{ |D|^{-1/2}, 
		%(\E^{+} (B) |B|^{-4} )^{1/5} \}  \,.
		|B|^{-1/4} \}  \,.
		\end{equation}
		%	where $c>0$ is an absolute constant. 
		\label{t:A_abs_Re_Z_intr} 
	\end{theorem}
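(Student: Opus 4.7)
Interpret each solution of $(a+b)(c+d)=\lambda$ as the fixed-point condition $T_{b,c}(a)=d$ for the M\"obius transformation $T_{b,c}(x)=\lambda/(x+b)-c$, corresponding to the matrix $M_{b,c}=U_{-c}J_\lambda U_b\in GL_2(\R)$ with $U_t=\bigl(\begin{smallmatrix}1&t\\0&1\end{smallmatrix}\bigr)$ and $J_\lambda=\bigl(\begin{smallmatrix}0&\lambda\\1&0\end{smallmatrix}\bigr)$. Denote the left-hand side of \eqref{f:A_abs_Re_Z_intr} by $N$ and apply Cauchy--Schwarz in the pair $(a,d)$ to obtain $N^2\le|A||D|\cdot E$ with
\[
E=\sum_{a\in A,\,d\in D}\bigl|\{(b,c)\in B^2:T_{b,c}(a)=d\}\bigr|^2,
\]
which counts $6$-tuples $(a,d,b_1,c_1,b_2,c_2)$ having $T_{b_1,c_1}(a)=T_{b_2,c_2}(a)=d$. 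The diagonal $(b_1,c_1)=(b_2,c_2)$ contributes $N$ itself; off-diagonally, $a$ is a fixed point of $g:=M_{b_1,c_1}^{-1}M_{b_2,c_2}$, of which there are at most two for each non-identity $g$.

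A direct computation gives
\[
g=U_{-b_1}\,V_{(c_1-c_2)/\lambda}\,U_{b_2},\qquad V_s=\bigl(\begin{smallmatrix}1&0\\s&1\end{smallmatrix}\bigr),
\]
and the fixed-point equation reduces to $(c_1-c_2)(a+b_1)(a+b_2)=\lambda(b_2-b_1)$. The crude off-diagonal bound $\le 2|B|^4$ already yields $N\ll\sqrt{|A||D|}\,|B|^2$, which (in conjunction with the $\sqrt{|A|}\,|B|^2$ branch under the $\max$) handles the range $|D|<|B|^{1/2}$. For the sharper $|B|^{7/4}$ bound, observe that $g$ is determined by $(b_1,b_2,(c_1-c_2)/\lambda)$ and the multiplicity of a given $g$ is $N_g=r_{B-B}(c_1-c_2)$, hence
\[
\sum_g N_g^2=|B|^2\,\E^+(B)\le|B|^5.
\]

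The hypothesis $B\subset\Z$ enters through the free subgroup structure of $SL_2(\Z)$: after clearing the denominator $\lambda$ (this is the source of the $|\lambda|$-dependence) the products $g$ lie in $SL_2(\Z)$, which contains a finite-index free subgroup $\Gamma$ (for instance the principal congruence subgroup $\Gamma(2)$). In a free subgroup of $PSL_2(\R)$, two commuting non-identity elements are powers of a common element, while any two non-parabolic M\"obius maps sharing a fixed point commute; hence the stabilizer of any point of $\R\cup\{\infty\}$ inside $\Gamma$ is cyclic. A word-length argument then forces at most $O_{|\lambda|}(1)$ of our products $g$ to fix any given $a\in\R$, so
\[
\bigl|\{g\ne\mathrm{id}:\mathrm{Fix}(g)\cap A\ne\emptyset\}\bigr|\ll_{|\lambda|}|A|.
\]
Combining this with Cauchy--Schwarz against $\sum_g N_g^2\le|B|^5$ upgrades the off-diagonal part of $E$ to $O_{|\lambda|}(\sqrt{|A|}\,|B|^{5/2})$, which is $\le|B|^{7/2}$ in the relevant regime $|A|\le|B|^2$; substitution produces the claimed $\sqrt{|A||D|}\,|B|^{7/4}$ bound.

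The main obstacle will be making the free-subgroup step quantitative. One has to identify a finite-index free subgroup $\Gamma\le PSL_2(\Z)$ absorbing essentially all of the products $U_{-b_1}V_{(c_1-c_2)/\lambda}U_{b_2}$, treat the finitely many bad cosets separately, and convert the abstract cyclicity of point-stabilizers into a uniform $O_{|\lambda|}(1)$ bound on fixed-point multiplicities independent of $|B|$. This quantitative step is precisely where the implicit $|\lambda|$-dependence in the theorem is produced.
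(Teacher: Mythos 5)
Your reduction to counting fixed points of the products $g=U_{-b_1}V_{(c_1-c_2)/\lambda}U_{b_2}$ is a genuinely different (and more elementary) route than the paper's, which iterates Cauchy--Schwarz $k$ times, bounds the energies $\T_{2k}(G_\la(-B,C))$ for \emph{every} $k$ via the free subgroup generated by $U_2$ and $V_{m/\la}$, and then applies Szemer\'edi--Trotter, with the exponent $|B|^{-1/4}$ emerging only after optimizing over $k\to\infty$. Unfortunately your version has a gap at its central step, the claim that $|\{g\neq\mathrm{id}:\mathrm{Fix}(g)\cap A\neq\emptyset\}|\ll_{|\la|}|A|$. First, it fails outright for the parabolic conjugates: for $b_1=b_2=b$ the element $g=U_{-b}V_sU_{b}$ fixes the single point $-b$ for \emph{every} $s\in(B-B)/\la$, so one point of $A$ can lie in $\mathrm{Fix}(g)$ for $\gg|B|$ distinct $g$ (these happen to contribute nothing to $E$ because the corresponding $d$ is $\infty$, but they do kill the stated lemma, so they must be excised explicitly). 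Second, and more seriously, even for the non-degenerate elements the cyclic-stabilizer argument does not give $O_{|\la|}(1)$ per point: $g$ fixes $a$ iff $U_{b_2-b_1}V_{s}$ fixes $a+b_2$, and the free-group/cyclic-stabilizer reasoning pins down only the pair $(b_2-b_1,s)$ up to $O(1)$ choices \emph{for each value of} $b_2$; the parameter $b_2$ remains free, so the honest conclusion is $|\{g:\mathrm{Fix}(g)\cap A\neq\emptyset\}|\ll_{|\la|}|A||B|$. Feeding that into your Cauchy--Schwarz gives $E_{\mathrm{off}}\ll\sqrt{|A||B|}\,|B|^{5/2}$ and hence $N\ll|A||D|+|A|^{3/4}|D|^{1/2}|B|^{3/2}$, which beats the target $\sqrt{|A||D|}\,|B|^{7/4}$ only when $|A|\le|B|$. (A side remark: your justification that the stabilizer is cyclic via ``two non-parabolic maps sharing a fixed point commute'' is false --- two hyperbolic elements sharing one fixed point need not commute; the correct route is that the stabilizer is a solvable subgroup of a free group, hence cyclic.)

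There are two further, smaller defects. Applying Cauchy--Schwarz over the pair $(a,d)$ produces the diagonal term $|A||D|$, which is not dominated by the right-hand side of \eqref{f:A_abs_Re_Z_intr} in all ranges (take $|A|,|D|$ both much larger than $|B|^2$); the paper avoids this by the asymmetric form of Lemma \ref{l:actions}, whose $k=1$ diagonal is only $\sqrt{|A||D|}\,|B|$. And your concluding restriction ``in the relevant regime $|A|\le|B|^2$'' is not a regime you are entitled to assume: the theorem is unconditional in $|A|,|D|$, and the complementary range is exactly where a single application of Cauchy--Schwarz is too weak --- this is the reason the paper's proof must run the Szemer\'edi--Trotter/energy dichotomy for arbitrarily large $k$ (see the exponents $\frac{2k-1}{8k-6}\to\frac14$ in $\rho(B,B)$). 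Finally, for irrational $\la$ the products do not land in $\SL_2(\Z)$ after ``clearing denominators''; one must instead restrict $B$ and $C$ to congruence classes modulo $2$ and modulo $2\lceil|\la|\rceil$ and invoke the ping--pong lemma for $U_2$ and $V_{m/\la}$ inside $\SL_2(\R)$, as the paper does in the proof of Theorem \ref{t:A_abs_Re_Z}.
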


	Rather mysterious part of Helfgott's proof of $\SL_2 (\F_p)$--growth result was that the sum--product phenomenon in $\F_p$, which deals exclusively with linear objects as points/lines, points/planes and so on gives absolutely nontrivial results for completely different curves, namely, for hyperbolas.
	An explanation in a particular but a transparent case is given in our Lemma \ref{l:T_3(G)},  where we estimate a certain energy of a subset of matrices from $\SL_2 (\F_p)$ via purely linear  sum--product quantity. Now it remains to notice that energies of subsets of acting groups are naturally 
	%connected 
	related 
	with the incidences, see, e.g., \cite{NG_S}, \cite{Brendan_rich}, \cite{RS_SL2}.

	It turns out that incidences between hyperbolas and points are connected with bilinear forms of Kloosterman sums, see 
	\cite{BFKMM}, \cite{FKM}, \cite{KS}--\cite{MS_Kloosterman}, 
	%\cite{KMS_gen},
	%, \cite{KMS}    
	\cite{Shp_hyp}--\cite{SZ_Kloosterman} and other papers.
	We obtain the following result in this  direction (see Theorems \ref{t:Kloosterman}, \ref{t:Kloosterman_NM} from Section \ref{sec:Kloosterman}), 
	which we  formulate here in a particular case (the main advantage 
	%strength 
	of our method is that it allows to consider rather general sets and weights). 
	Recall that the Kloosterman sum in a  finite field $\F$  is  
	\[
	K(n,m) = \sum_{x \in \F \setminus \{0\}} e( nx + mx^{-1}) \,.
	\]
	We are interested in bilinear forms of Kloosterman sums \cite{KMS}--\cite{KMS_gen}, that is, the sums of the form 
	\[
	S(\a,\beta) = \sum_{n,m} \a(n) \beta (m) K(n,m) \,,
	\] 
	where $\a : \F \to \C$, $\beta : \F \to \C$ are rather arbitrary  functions.

	\begin{theorem}
		Let $\a,\beta  : \F_p \to \C$ be functions with supports on $\{1,\dots, N\} +t_1$ and $\{1,\dots, M\} +t_2$, respectively,  
		and 
		$N$ or $M$ is at most $p^{1-c}$, $c>0$.
		% and $A=\supp \a$, $B=\supp \beta$.
		Then
		\begin{equation}\label{f:d-est_intr}
		S(\a,\beta ) \lesssim \| \a \|_2 \| \beta \|_2  p^{1-\d}  \,,
		\end{equation}
		where 
		%$\d>0$ 
		$\d (c) >0$ 
		is a positive constant. 
		Besides, if  
		%$\beta_2 = \{1,\dots, M\} + t_2$ and 
		$M^2 <pN$, then
		%$\supp \beta \subseteq \{1,\dots, M\}$, then  
		\begin{equation}\label{f:Kloosterman_NM_1_intr}
		S(\{1,\dots, N\}+t_1,\beta) \lesssim \| \beta \|_2 \left(N^{3/7} M^{1/7} p^{13/14} +  N^{3/4} p^{3/4} + N^{1/4} p^{13/12} \right) \,.
		\end{equation}
		%	Finally, if $N=M \le p^{1/21}$ and $t_1 =t_2= 0$, then 
		%	\begin{equation}\label{f:Kloosterman_NM_3_intr}
		%	S(\a,\beta) \lesssim \sqrt{p} \| \a\|_2 \| \beta \|_2 N^{63/64}  \,.
		%	\end{equation}	
		\label{t:Kloosterman_intr}
	\end{theorem}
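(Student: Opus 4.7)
The plan is to reduce $S(\alpha,\beta)$ to a weighted incidence count between points and the modular hyperbola $\{xy=1\}$, and then invoke the hyperbola--incidence estimates proved earlier in the paper. Expanding the Kloosterman sum and swapping the order of summation yields the clean Fourier--Kloosterman identity
\[
S(\alpha,\beta) \;=\; \sum_{x\in \F_p^*}\, \widehat\alpha(x)\, \widehat\beta(x^{-1}),
\]
where $\widehat{f}(x) = \sum_n f(n) e(nx)$. Thus $S(\alpha,\beta)$ is a product of Fourier transforms summed exactly along the hyperbola $xy=1$.

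Since $\alpha$ and $\beta$ are supported on intervals of lengths $N$ and $M$, the functions $\widehat\alpha$ and $\widehat\beta$ behave like modulated Dirichlet kernels, with $|\widehat\alpha(h)|\lesssim \min(N,\, p/|h|)$ (distances taken mod $p$) and similarly for $\widehat\beta$. Dyadically decompose them through level sets $\Omega_i=\{h:|\widehat\alpha(h)|\sim 2^{-i}N\}$ and $\Theta_j=\{h:|\widehat\beta(h)|\sim 2^{-j}M\}$; each of these is essentially an interval of length $\sim 2^{i}p/N$, resp.\ $\sim 2^{j}p/M$, about the origin. The triangle inequality then gives
\[
|S(\alpha,\beta)| \;\lesssim\; \sum_{i,j}\, 2^{-i-j}\, NM \cdot I(\Omega_i,\Theta_j),
\]
where $I(\Omega,\Theta)=|\{x\in\Omega: x^{-1}\in\Theta\}|$ counts incidences of $\{xy=1\}$ with $\Omega\times\Theta$. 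Writing each interval as a sumset $A+B$ of shorter intervals converts this count precisely into the form $(a+b)(c+d)=1$ controlled by Theorem~\ref{t:hyp_incidences_intr} (and its relatives from Section~\ref{sec:hyp_first}).

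For each pair $(i,j)$ the incidence bound splits as the main term $|\Omega_i||\Theta_j|/p$ plus the two error contributions coming from $|A|^{1/4}|B||C||D|^{1/2}$ and $|A|^{3/4}(|B||C|)^{41/48}|D|^{1/2}$ at scales $|A|,|B|\sim 2^i p/N$, $|C|,|D|\sim 2^j p/M$. The main term, summed against the weights $2^{-i-j}NM$, contributes harmlessly; the two error terms, after dyadic summation and optimization using $M^2<pN$, produce respectively the bounds $N^{3/7}M^{1/7}p^{13/14}$, $N^{3/4}p^{3/4}$, and $N^{1/4}p^{13/12}$ of \eqref{f:Kloosterman_NM_1_intr}. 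To obtain the more general estimate \eqref{f:d-est_intr} one replaces $\mathbf 1_I$ by an arbitrary $\alpha$ on $I$, normalises via $\|\alpha\|_2$ (using that $|\widehat\alpha(h)|$ is still controlled by a Dirichlet-type envelope), and runs the same argument; the hypothesis $\min(N,M)\le p^{1-c}$ then guarantees that the non--trivial regime of the incidence bound is actually reached, yielding a saving $p^{-\delta}$ with some effective $\delta=\delta(c)>0$.

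The main obstacle is the dyadic optimisation in the last step: the multi--term incidence estimate must be applied at the correct scale $(i,j)$ and with the right splitting of each interval as $A+B$ in order to saturate each regime, and the exponents $3/7$, $1/7$, $13/14$ are born of the resulting dyadic sum. Some care is also needed at the boundary levels, where $\Omega_i$ or $\Theta_j$ degenerates into most of $\F_p$ and one has to fall back on the trivial bound --- this is precisely what forces the assumption $\min(N,M)\le p^{1-c}$ in the $\delta$--saving estimate \eqref{f:d-est_intr}.
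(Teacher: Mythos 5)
Your starting point --- the identity \eqref{f:S_Fourier} and the plan to convert $\sum_x \widehat{\alpha}(x)\widehat{\beta}(x^{-1})$ into a hyperbola--incidence count --- is exactly the paper's strategy, but the way you manufacture the additive structure does not work, and this is the crux of the argument. For a \emph{general} function $\alpha$ supported on $[N]+t_1$ the bound $|\widehat{\alpha}(h)|\lesssim\min(N,p/|h|)$ is false: only the characteristic function of the interval has a Dirichlet--kernel envelope, while an arbitrary weight can concentrate $\widehat{\alpha}$ at whatever frequencies Parseval permits. Hence your level sets $\Omega_i$, $\Theta_j$ need not be intervals and cannot be written as sumsets of shorter intervals, so the reduction to $(a+b)(c+d)=1$ collapses. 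The paper circumvents this by factoring $\alpha=\alpha\cdot 1_{[N]+t_1}$ and using $\widehat{\alpha\cdot 1}=p^{-1}\widehat{\alpha}*\widehat{1}$: the convolution itself supplies the sums $a+a'$, with $a$ ranging over an arbitrary level set of $\widehat{\alpha}$ and $a'$ over a level set of the Dirichlet kernel, whose total weight is controlled via the Wiener norm $\|1_{[N]}\|_W\ll\log p$ and Lemma \ref{l:P_norms}. With that fix, \eqref{f:d-est_intr} follows from Theorem \ref{t:B_any} as in Theorem \ref{t:Kloosterman}, the hypothesis that $N$ or $M$ is at most $p^{1-c}$ serving to make the main and degenerate terms negligible.

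The second gap is quantitative. Feeding the incidence bound of Theorem \ref{t:hyp_incidences_intr} (i.e.\ Corollary \ref{c:hyp_incidences}) into this scheme yields only the exponents $N^{7/48}M^{7/48}p^{23/24}$ of \eqref{f:Kloosterman_NM_1}; no dyadic bookkeeping converts these into $N^{3/7}M^{1/7}p^{13/14}$. The stronger exponents in \eqref{f:Kloosterman_NM_1_intr} need two further ingredients absent from your outline: a smoothing step replacing $\widehat{\alpha_0}$ and $\widehat{\beta_0}$ by $|Q_i|^{-1}(\widehat{\cdot}*Q_i)$ for auxiliary symmetric progressions $Q_1,Q_2$ (at the cost of an error $\|\alpha\|_2\|\beta\|_2\,O(N^2Q_1^2/p+M^2Q_2^2/p)$), and the incidence bound of Corollary \ref{c:hyp_incidences_progr}, which exploits the sharp multiplicative energy of arithmetic progressions from Theorem \ref{t:E^*_progr}. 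Optimizing $|Q_1|,|Q_2|$ under $NQ_1=MQ_2$ together with the constraint $M^2<pN$ is what produces $3/7$, $1/7$ and $13/14$. So the architecture of your proof is right, but both the reduction to incidences for general weights and the actual source of the stated exponents are missing.
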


	%Bound \eqref{f:Kloosterman_NM_3_intr} says that in the case of zero shifts and the same arithmetic progression, we have a nontrivial saving for sums $S(\a,\beta)$ for very wide range of parameters, namely, $(\log p)^C \ll N \le p^{1/21}$, where $C>1$ is an absolute constant (we assume for simplicity that  $\a$ and $\beta$ are bounded).  
	%Here $\| f\|_W$ is normalized $L^1$--norm of Fourier transform, see Section \ref{sec:definitions}. 
	It is easy to check that the last result is better than \cite[Theorem 7]{Shp_Sato}, as well as \cite[Theorem 1.17(2)]{FKM} 
	but worse than the current world record from \cite{KMSII} in the case $N=M$.
	%Previously, such savings were known \cite{KMSII} just for $N\ge p^{3/8+\eps}$, $\eps>0$  and for $N\ge p^{1/3+\eps}$, $\eps>0$ in the case $\a=\{1,\dots, N\}$ 
	Of course the advantage of our results is that they hold in  very general situation.
	Also, the method of the proof is not analytical but combinatorial  one and hence does not require deep tools from Algebraic Geometry as in \cite{KMSII}. 
	% (see Theorem \ref{t:Kloosterman} below).  
	%For large powers of $p$ a similar saving as in Theorem \ref{t:Kloosterman_intr} was obtained in \cite{KST}.

	%Of course bound \eqref{f:Kloosterman_NM_3_intr} of Theorem \ref{t:Kloosterman_intr} implies a number--theoretical consequence.

	%\begin{corollary}
	%	Let $p$ be a prime and $(\log p)^C \ll X \ll p^{1/21}$ for a certain absolute constant $C>1$. 
	%	Then 
	%\[
	%	\sum_{n=1}^X \tau(n) K(n,1) \ll X^{1-c} \,,
	%\] 
	%	where $c<1/64$.
	%\end{corollary}

	In 
	%the 
	our 
	paper we develop the ideas from \cite{NG_S}, where growth in $\SL_2 (\F_p)$ was applied to the Zaremba conjecture about continued fractions. 
	Before this paper various analytical tools (as Kloosterman sums)  
	%applied 
	were used 
	in 
	%this 
	%the discussed 
	the aforementioned 
	area, see, e.g., \cite{Mosh_Kloosterman}.
	After 
	%this paper 
	\cite{NG_S} 
	it is not surprising that sometimes  combinatorial methods give results of comparable quality 
	%and better 
	to the ones, which were obtained via deep analytical techniques.

	%We conclude with few comments regarding the notation used in this paper. 
	%For a positive integer $n,$ we set $[n]=\{1,\ldots,n\}.$ 
	%All logarithms are base $2.$ Signs $\ll$ and $\gg$ are the usual Vinogradov's symbols. 
	%Finally, with a slight abuse of notation we use the same letter to denote a set $S\subseteq \Gr$ 
	%and its characteristic function $S:\Gr\rightarrow \{0,1\}.$

	All logarithms are to base $2.$ The signs $\ll$ and $\gg$ are the usual Vinogradov symbols.
	For a positive integer $n,$ we set $[n]=\{1,\ldots,n\}.$
	Having a set $A$,  we will write $a \lesssim b$ or $b \gtrsim a$ if $a = O(b \cdot \log^c |A|)$, $c>0$.

	The author is grateful to Brendan Murphy, Nikolay Moshchevitin, Dmitrii Frolenkov and Maxim Korolev for very useful discussions and fruitful explanations.

%\section{Notation and preliminaries}
\section{Notation}
\label{sec:definitions}

In this paper $\F$ is a field, and $p$ is an odd prime number, 
%further 
$\F_p = \Z/p\Z$ and $\F_p^* = \F_p \setminus \{0\}$. 
%
%
%%If $\gamma \in \f_p^*$ then $f^\gamma (x) := f(\gamma x)$.
We denote the Fourier transform of a function  $f : \F_p \to \mathbb{C}$ by~$\FF{f},$ namely, 
\begin{equation}\label{F:Fourier}
\FF{f}(\xi) =  \sum_{x \in \F_p} f(x) e( -\xi \cdot x) \,,
\end{equation}
where $e(x) = e^{2\pi i x/p}$. 
We rely on the following basic identities. 
The first one is called the Plancherel formula and its particular case $f=g$ is called the Parseval identity 
\begin{equation}\label{F_Par}
\sum_{x\in \F_p} f(x) \ov{g (x)}
=
\frac{1}{p} \sum_{\xi \in \F_p} \widehat{f} (\xi) \ov{\widehat{g} (\xi)} \,.
\end{equation}
Another  particular case of (\ref{F_Par}) is 
\begin{equation}\label{svertka}
\sum_{y\in \F_p} |(f*g) (x)|^2 
=
\sum_{y\in \F_p} \Big|\sum_{x\in \F_p} f(x) g(y-x) \Big|^2
= \frac{1}{p} \sum_{\xi \in \F_p} \big|\widehat{f} (\xi)\big|^2 \big|\widehat{g} (\xi)\big|^2 \,.
\end{equation}
and the identity 
%%the inversion formula
\begin{equation}\label{f:inverse}
f(x) = \frac{1}{p} \sum_{\xi \in \F_p} \FF{f}(\xi) e(\xi \cdot x) 
\,.
\end{equation}
is called the inversion formula.
The (normalized) Wiener norm of $f(x)$ is defined as 
\begin{equation}\label{def:Wiener}
\| \FF{f} \|_{L^1} = \| f\|_W := \frac{1}{p} \sum_{\xi \in \F_p} |\FF{f}(\xi)| \,.
\end{equation}
Clearly, by the Parseval identity \eqref{F_Par}, the inverse formula \eqref{f:inverse} and the Cauchy--Schwarz inequality, we have 
\begin{equation}\label{f:Wiener}
\|f \|_\infty \le \| f\|_W \le \|f\|_2 \quad \quad \mbox{and} \quad \quad  \| f\|_1 \le p\| f\|_W \,.
\end{equation}
It is well--known that equipped with the Wiener norm the set of functions on the group forms an algebra relatively pointwise multiplication.   
In this paper we use the same letter to denote a set $A\subseteq \F$ and  its characteristic function $A: \F \to \{0,1 \}$.  
Also, we write $f_A (x)$ for the {\it balanced function} of a set $A\subseteq \F_p$, namely, $f_A (x) = A(x) - |A|/p$.
Let  $m \cdot A$ 
%for the set 
be the set 
$\{ ma ~:~ a\in A\}$.  
%Finally, having two functions $f(x), g(x)$ a new function $(f \otimes g) (x,y)$ is $(f \otimes g) (x,y) = f(x) g(y)$. 

%	We recall the notion of the spectrum $\Spec_\eps (A)$ of a set $A$.
%%	 and formulate the required result about the structure of $\Spec_\eps (A)$.
%	Let $A\subseteq \F_p$ be a set, and $\eps \in (0,1]$ be a real number.
%	Define
%	$$
%	\Spec_\eps (A) = \{ r \in \F_p ~:~ |\FF{A}(r)| \ge \eps |A| \} \,.
%	$$

Put
$\E^{+}(A,B)$ for the {\it common additive energy} of two sets $A,B \subseteq \f_p$
(see, e.g., \cite{TV}), that is, 
$$
\E^{+} (A,B) = |\{ (a_1,a_2,b_1,b_2) \in A\times A \times B \times B ~:~ a_1+b_1 = a_2+b_2 \}| \,.
$$
If $A=B$, then  we simply write $\E^{+} (A)$ instead of $\E^{+} (A,A)$
and the quantity $\E^{+} (A)$ is called the {\it additive energy} in this case. 
One can consider $\E^{+}(f)$ for any complex function $f$ as well.  
More generally, 
we deal with 
%consider 
a higher energy
\begin{equation}\label{def:T_k_ab}
\T^{+}_k (A) := |\{ (a_1,\dots,a_k,a'_1,\dots,a'_k) \in A^{2k} ~:~ a_1 + \dots + a_k = a'_1 + \dots + a'_k \}|
=
\frac{1}{p} \sum_{\xi} |\FF{A} (\xi)|^{2k}
\,.
\end{equation}
The last identity follows from  (\ref{svertka}). 
Another sort of higher energy is \cite{SS1} 
\[
\E^{+}_k (A) = |\{ (a_1,\dots,a_k,a'_1,\dots,a'_k) \in A^{2k} ~:~ a_1-a'_1 = \dots = a_k - a'_k \}| \,.
\]
Sometimes we  use representation function notations like $r_{AB} (x)$ or $r_{A+B} (x)$, which counts the number of ways $x \in \F_p$ can be expressed as a product $ab$ or a sum $a+b$ with $a\in A$, $b\in B$, respectively. 
%%	For example, $|A| = r_{A-A}(0)$ and  $\E^{+} (A) = r_{A+A-A-A}(0)=\sum_x r^2_{A+A} (x) = \sum_x r^2_{A-A} (x)$.  
%	Thus $r_{A+B} (x) = (A*B) (x)$, say.
%	Having $P\subseteq A-A$ we write $\sigma_P (A) := \sum_{x \in P} r_{A-A} (x)$. 
%Put $\sigma^{+} (A) = \sum_{x \in A} r_{A-A} (x)$.  
Further clearly
\begin{equation*}\label{f:energy_convolution}
\E^{+} (A,B) = \sum_x r_{A+B}^2 (x) = \sum_x r^2_{A-B} (x) = \sum_x r_{A-A} (x) r_{B-B} (x)
\end{equation*}
and by (\ref{svertka}),
%we have
%the following holds
\begin{equation}\label{f:energy_Fourier}
\E^{+}(A,B) = \frac{1}{p} \sum_{\xi} |\FF{A} (\xi)|^2 |\FF{B} (\xi)|^2 \,.
\end{equation}
Similarly, one can define $\E^\times (A,B)$, $\E^{\times} (A)$, $\E^{\times} (f)$  and so on.

\section{Preliminaries}
\label{sec:preliminaries}

We need in  a sum--product result from \cite[Theorem 32]{sh_as}, as well as 
%a result 
\cite[Theorem 35]{collinear}.

\begin{lemma}
	Let $A,B \subseteq \F_p$ be sets.
	Then
	\[
	\sum_x r^{2}_{(A-A)(B-B)} (x) - \frac{|A|^4 |B|^4}{p} \lesssim (|A| |B|)^{5/2} \E^{+} (A,B)^{1/2} \,.
	\]
	\label{l:D_2}
\end{lemma}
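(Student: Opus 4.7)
Expanding, the left-hand side $\sum_x r^2_{(A-A)(B-B)}(x) - |A|^4|B|^4/p$ counts (minus the ``trivial'' Fourier mode) the $8$-tuples $(a_i)_{i=1}^4 \in A^4$, $(b_j)_{j=1}^4 \in B^4$ satisfying the hyperbola equation
\[
(a_1-a_2)(b_1-b_2) = (a_3-a_4)(b_3-b_4).
\]
My plan is to decouple this via the ratio $r = (a_1-a_2)/(a_3-a_4) = (b_3-b_4)/(b_1-b_2)$ and then to invoke a Rudnev/Stevens--de Zeeuw type point-plane incidence bound.

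Introducing the dilated additive energies $T(r) := \sum_{u\ne 0} r_{A-A}(u)\,r_{A-A}(ru)$ and $S(r) := \sum_{v\ne 0} r_{B-B}(v)\,r_{B-B}(rv)$, one verifies after the change of variable $r = u_1/u_2$, $v_2 = v_1/r$ inside the 8-tuple count that
\[
\sum_{x \ne 0} r^2_{(A-A)(B-B)}(x) = \sum_{r \ne 0} T(r)\,S(r).
\]
The boundary contributions (the $x=0$ term and the $u_i = 0$ or $v_j = 0$ cases that are cut by the substitution) are of lower order; using $\E^+(A,B) \ge |A||B|$ together with the subtraction of $|A|^4|B|^4/p$ they are absorbed into the target bound.

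To bound $\sum_r T(r) S(r)$, I would apply Rudnev's point-plane incidence theorem \cite{Rudnev_pp} to a suitable parameterization of the hyperbola equation in $\F_p^3$: fix, say, $(a_3, a_4, b_1, b_2)$ as plane parameters and view $(a_1, a_2, b_3, b_4)$ as the point variables after an affine change of coordinates that linearizes the equation. Rudnev's bound
\[
I(P,\Pi) - \frac{|P||\Pi|}{p} \ll (|P||\Pi|)^{3/4} + k\,|\Pi|,
\]
with $|P|, |\Pi| \sim (|A||B|)^2$ and $k$ a line-richness parameter, would by itself yield only an $(|A||B|)^{3}$-type estimate. The improvement down to $(|A||B|)^{5/2}\E^+(A,B)^{1/2}$ comes from a preliminary Cauchy--Schwarz step, in the style of \cite[Theorem~32]{sh_as} and \cite[Theorem~35]{collinear}, which exploits the bilinear structure of $(a_i-a_j)(b_k-b_\ell)$ to bound the richness parameter $k$ by $\E^+(A,B)^{1/2}$ rather than by a cruder quantity; balancing the two terms in Rudnev's inequality then produces the claimed bound.

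The main obstacle is precisely that Cauchy--Schwarz step: the parameterization must be set up so that the richness factor from Rudnev's theorem is controlled by the \emph{common} additive energy $\E^+(A,B)^{1/2}$, and not by $\max(\E^+(A),\E^+(B))^{1/2}$ or $(\E^+(A)\E^+(B))^{1/4}$. Respecting the bilinear structure of the hyperbola equation in this Cauchy--Schwarz is the delicate technical point that distinguishes this sum-product estimate from more standard ones.
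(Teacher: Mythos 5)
A preliminary remark: the paper itself does not prove this lemma --- it is imported from \cite[Theorem 32]{sh_as} and \cite[Theorem 35]{collinear} --- so your proposal has to be judged against the content of those results rather than against an in-paper argument. Your opening identity is fine: counting solutions of $(a_1-a_2)(b_1-b_2)=(a_3-a_4)(b_3-b_4)$ and factoring through $r=(a_1-a_2)/(a_3-a_4)=(b_3-b_4)/(b_1-b_2)$ does give $\sum_{x\neq 0}r^{2}_{(A-A)(B-B)}(x)=\sum_{r\neq 0}T(r)S(r)$. But this is the \emph{wrong grouping} for the bound you are after. Up to diagonal terms $T(r)=\E^{+}(A,r\cdot A)-|A|^2$ and $S(r)=\E^{+}(B,r\cdot B)-|B|^2$, so $T$ sees only $A$ and $S$ only $B$; any Cauchy--Schwarz applied to $\sum_r T(r)S(r)$ that keeps the two factors apart produces $\sum_r T(r)^2=\sum_x r^2_{(A-A)(A-A)}(x)$ and its $B$-analogue, hence at best a bound of the shape $(|A||B|)^{5/2}\bigl(\E^{+}(A)\E^{+}(B)\bigr)^{1/4}$, which is \emph{weaker} than the target since $\E^{+}(A,B)\le\bigl(\E^{+}(A)\E^{+}(B)\bigr)^{1/2}$. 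The grouping that surfaces the common energy is the other ratio, $\mu=(a_1-a_2)/(b_3-b_4)=(a_3-a_4)/(b_1-b_2)$, which gives
\begin{equation*}
\sum_{x\neq 0}r^{2}_{(A-A)(B-B)}(x)=\sum_{\mu\neq 0}\bigl(\E^{+}(A,\mu\cdot B)-|A||B|\bigr)^{2}\,,
\end{equation*}
a sum of squares of \emph{cross} energies, and it is to this object that the incidence machinery must be applied.

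The second, and more serious, gap is the one you flag yourself: the step in which Rudnev's theorem is set up so that the relevant richness/collinearity parameter is controlled by $\E^{+}(A,B)^{1/2}$ is precisely the content of the cited theorems, and you do not supply it. As described, your configuration (plane parameters $(a_3,a_4,b_1,b_2)$, point variables $(a_1,a_2,b_3,b_4)$) is a hyperplane arrangement in $\F_p^{4}$, to which the point--plane theorem in $\F_p^{3}$ does not apply directly; the reduction to three dimensions (or to a weighted point--line problem for the grid $A\times B$, in the style of Stevens--de Zeeuw) and the bookkeeping that turns the richness term into $\E^{+}(A,B)^{1/2}$ while the main term becomes $(|A||B|)^{5/2}$ is exactly the missing work. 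Finally, the dismissal of the degenerate contributions is not automatic: the $x=0$ term alone is of order $\bigl(|A|^2|B|+|A||B|^2\bigr)^{2}$, which can exceed $(|A||B|)^{5/2}\E^{+}(A,B)^{1/2}$ when $|A|$ and $|B|$ are very lopsided, so these terms must be excluded or treated explicitly rather than ``absorbed''.
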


\begin{lemma}
	Suppose that $A$ is a subset of $\F_p$ such that $|A+A| = K|A|$ and $|A| \le p^{13/23} K^{25/92}$. 
	Then
	\[
	\E^\times (A) \lesssim K^{51/26} |A|^{32/13} \,.
	\] 
	\label{l:E_times_A}
\end{lemma}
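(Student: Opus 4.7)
The plan is to reduce the estimate on $\E^\times(A)$ to a point--plane incidence bound of Rudnev type in $\F_p^3$, and to import the small doubling hypothesis via Pl\"unnecke--Ruzsa. First, I would write $\E^\times(A) = \sum_\lambda r_{A/A}(\lambda)^2$, where $r_{A/A}(\lambda) = |A \cap \lambda \cdot A|$ counts points of $A \times A$ on the line $\{y = \lambda x\}$, and dyadically decompose in $\lambda$: the level sets $P_\tau = \{\lambda : r_{A/A}(\lambda) \sim \tau\}$ satisfy $|P_\tau| \le |A|^2/\tau$, so $\E^\times(A) \lesssim \max_\tau \tau^2 |P_\tau|$. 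For each $\lambda \in P_\tau$ the set $A_\lambda := A \cap \lambda \cdot A$ has cardinality $\sim \tau$ and, by Pl\"unnecke--Ruzsa applied to $|A+A| = K|A|$, satisfies $|A_\lambda + A_\lambda| \ll K|A|$, so it inherits the small doubling of $A$ with only a loss polynomial in $K$.

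Next, I would pass to a higher moment---either $T_3^\times(A) = \sum_\lambda r_{A/A}(\lambda)^3$, or a mixed energy coupling $r_{A/A}$ to the additive convolution $r_{A-A}$---and express it as the number of incidences between $O(|A|^3)$ points in $\F_p^3$ and a corresponding family of planes. The linearization of a chain of multiplicative identities into a coplanarity condition is the standard manoeuvre going back to Rudnev and exploited in \cite{NG_S}. Rudnev's point--plane theorem then yields a bound of the form $\lesssim |A|^{9/2}$ with an error controlled provided the point set has size below $p^2$; the hypothesis $|A| \le p^{13/23} K^{25/92}$ is exactly the regime where this main term dominates for the configuration producing the final exponents.

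Finally, I would descend from the third moment back to $\E^\times(A)$ via H\"older's inequality, and fold in the additive hypothesis through Lemma \ref{l:D_2}, which transfers an $\E^+$--style bound on $(A-A)(B-B)$ into control of a multiplicative quantity. The exponents $32/13$ and $51/26$ should then drop out of a two-parameter dyadic optimization---balancing the level $\tau$ against the incidence bound, and the Pl\"unnecke loss against the admissibility range for Rudnev's theorem. The main obstacle I anticipate is encoding the sumset hypothesis into the point--plane configuration with the correct weight in $K$: any looseness in this step immediately degrades the final exponents, and obtaining the precise exponent $51/26$ will likely require a symmetrized Katz--Koester popularity argument to treat the additive and multiplicative sides on equal footing.
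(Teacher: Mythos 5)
First, a point of order: the paper does not prove Lemma \ref{l:E_times_A} at all --- it is imported verbatim as Theorem~35 of \cite{collinear} (see the sentence opening Section \ref{sec:preliminaries}), so there is no in-paper argument to measure your sketch against. Your proposal does point at the right toolkit --- Rudnev's point--plane incidence theorem, dyadic decomposition of $r_{A/A}$, higher moments, Pl\"unnecke--Ruzsa --- which is indeed the machinery behind the cited result. But as written it is a research plan rather than a proof, and two of its steps have genuine gaps.

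The first gap is the claim that $A_\lambda = A\cap\lambda\cdot A$ ``inherits the small doubling of $A$ with only a loss polynomial in $K$.'' Since $A_\lambda\subseteq A$, all one gets is $|A_\lambda+A_\lambda|\le |A+A|=K|A|$, i.e.\ a doubling constant of $K|A|/\tau$ \emph{relative to} $|A_\lambda|\sim\tau$; this is not $K^{O(1)}$ unless $\tau\gg|A|$, so the popular level sets do not automatically carry a usable small-doubling hypothesis. Encoding the hypothesis $|A+A|=K|A|$ into the incidence configuration is exactly where the work lies, and the sketch does not do it. The second gap is quantitative: a single pass through Rudnev's theorem (bounding a third moment by point--plane incidences and descending by H\"older or Cauchy--Schwarz) yields, in this range of $|A|$, bounds of the shape $\E^\times(A)\ll K^{3/2}|A|^{5/2}$. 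Since $32/13<5/2$, the claimed exponent is strictly stronger in the $|A|$-aspect and cannot ``drop out of a two-parameter dyadic optimization'' of that one application. The exponents $51/26$ and $32/13$ --- equivalently $\E^\times(A)^{26}\lesssim |A+A|^{51}|A|^{13}$ --- arise from the bootstrapped argument of \cite{collinear}, which couples the point--plane bound to third-moment additive energies $\E^+_3$ and an eigenvalue/Katz--Koester step; your closing sentence correctly names this as a possible missing ingredient, but it is the heart of the proof, not a refinement, and the threshold $|A|\le p^{13/23}K^{25/92}$ is tuned to that iteration rather than to the admissibility condition of a single Rudnev application. (Lemma \ref{l:D_2}, which you invoke at the end, controls $\sum_x r^2_{(A-A)(B-B)}(x)$ by $\E^+(A,B)$ and runs in the opposite direction to what you need here.)
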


In \cite[Theorem 2]{CSZ} it was obtained a very precise result on multiplicative energy of arithmetic progressions.

\begin{theorem}
	Let $A$ and $B$ be  arithmetic progressions with the difference equals one. Then 
	\[
	\E^\times (A,B) = \frac{|A|^2 |B|^2}{p} + O(|A||B| \log^2 p) \,.
	\]
	\label{t:E^*_progr}
\end{theorem}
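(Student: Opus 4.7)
My plan is to evaluate $\E^\times(A,B)$ by lifting the modular equation back to $\Z$ and combining it with an additive Fourier expansion. A quadruple $(a_1,b_1,a_2,b_2)\in A^2\times B^2$ contributes to $\E^\times(A,B)$ precisely when $a_1 b_1 - a_2 b_2 = kp$ for a unique integer $k$ with $|k|\le |A||B|/p$. Splitting according to whether $k=0$ gives
\[
\E^\times(A,B) = N_0 + \sum_{k\ne 0} N_k,\qquad N_k := \#\{(a_1,b_1,a_2,b_2)\in A^2\times B^2 : a_1 b_1 - a_2 b_2 = kp\}.
\]

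For the diagonal term the equation $a_1 b_1 = a_2 b_2$ holds in $\Z$, and standard divisor bounds yield
\[
N_0 = \sum_n d_{A,B}(n)^2 \;\le\; \sum_{(a,b)\in A\times B} d(ab) \;\le\; \Big(\sum_{a\in A}d(a)\Big)\Big(\sum_{b\in B}d(b)\Big) \;\ll\; |A||B|\log^2 p,
\]
using the multiplicative inequality $d(ab)\le d(a)d(b)$ together with $\sum_{n\le N}d(n)\ll N\log p$. This already supplies the entire error term claimed in the statement. The main term should come from the off-diagonal part, which I would extract via an additive Fourier expansion on $\F_p$: orthogonality of characters produces
\[
\E^\times(A,B) = \frac{|A|^2|B|^2}{p} + \frac{1}{p}\sum_{\xi\ne 0}|S(\xi)|^2,\qquad S(\xi) = \sum_{a\in A,\,b\in B} e(\xi a b/p),
\]
so the problem reduces to showing the tail $p^{-1}\sum_{\xi\ne 0}|S(\xi)|^2$ is $O(|A||B|\log^2 p)$, equivalently $\sum_{k\ne 0}N_k \le |A|^2|B|^2/p + O(|A||B|\log^2 p)$. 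Note that this tail is automatically nonnegative, so only an upper bound is needed.

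The main obstacle is this sharp upper bound on the off-diagonal sum (or equivalently on the bilinear exponential-sum tail). A naive divisor-function estimate via Shiu-type bounds $\sum_{m\equiv r\pmod p,\, m\le X}d(m)\ll (X/p)\log X$ summed over all pairs $(a_1,b_1)\in A\times B$ produces an extra logarithmic factor, yielding $|A|^2|B|^2\log p/p$ rather than the clean main term. To remove this loss I would either apply the Polya--Vinogradov estimate to the inner geometric sum $\sum_{b\in B}e(\xi a b/p)$ and exploit the equidistribution of $\{\xi a\bmod p: a\in A\}$ as $a$ ranges over the interval $A$, or give a direct arithmetic count based on the distribution of modular inverses $\{\lambda a^{-1}\bmod p : a\in A\}$ inside intervals. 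In either route the divisor bound on $N_0$ already provides the required $\log^2 p$ error, so the remaining task is merely to verify that the off-diagonal sum matches its expected value $|A|^2|B|^2/p$ with slack much smaller than $|A||B|\log^2 p$.
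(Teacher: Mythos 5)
The paper does not prove this statement at all --- it is imported verbatim from \cite{CSZ} --- so your argument has to stand on its own, and it does not. The two pieces you actually establish, namely the orthogonality identity $\E^\times(A,B)=\frac{|A|^2|B|^2}{p}+\frac{1}{p}\sum_{\xi\neq 0}|S(\xi)|^2$ and the diagonal bound $N_0\ll |A||B|\log^2 p$, are correct (the latter only for $A=[N]$, $B=[M]$, see below), but between them they contain essentially none of the content of the theorem: everything is in the bound $\frac{1}{p}\sum_{\xi\neq0}|S(\xi)|^2\ll |A||B|\log^2p$, which you explicitly defer, and neither of the two remedies you propose can deliver it. Completing the inner sum gives $|S(\xi)|\le\sum_{a\in A}\min\{|B|,\|\xi a/p\|^{-1}\}$; once absolute values are taken the cancellation over $a$ is gone, and Cauchy--Schwarz in $a$ together with the fact that $\xi\mapsto\xi a$ permutes $\F_p^*$ yields only $\frac{1}{p}\sum_{\xi\neq0}|S(\xi)|^2\ll|A|^2|B|$, worse than the target by a factor of order $\min\{|A|,|B|\}/\log^2p$. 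The other route, a pointwise count of $\{(a,b):ab\equiv n \pmod p\}$ via the distribution of modular inverses (equivalently, Weil's bound for incomplete Kloosterman sums), gives $|A||B|/p+O(p^{1/2}\log^2p)$ per residue class, which is vacuous in the entire range $|A||B|\le p^{3/2}$ --- precisely where the theorem is interesting. So the proposal in effect reduces the theorem to itself. The actual proof (Cilleruelo--Shparlinski--Zumalac\'arregui, and Ayyad--Cochrane--Zheng for initial intervals) rests on a different idea: one writes $\E^\times(A,B)=\sum_\lambda r_{A/A}(\lambda)r_{B/B}(\lambda)$ and controls, in an $L^2$-averaged sense over $\lambda$, the number of lattice points $(x,y)$ in a box with $x\equiv\lambda y\pmod p$ via a gcd/lattice-point argument; nothing in your sketch supplies this input.

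A secondary but genuine gap: your lift to $\Z$ (the restriction $|k|\le|A||B|/p$) and the estimate $\sum_{a\in A}d(a)\ll|A|\log p$ both tacitly assume $A=[N]$ and $B=[M]$. The theorem, and Corollary \ref{c:hyp_incidences_progr} which invokes it, concern arbitrary progressions of difference one, i.e.\ intervals $[N]+t$ with $t\in\F_p$ arbitrary. There the least positive representatives have size up to $p$, the parameter $k$ runs over roughly $p$ values rather than $|A||B|/p$ of them, the term $k=0$ is no longer a distinguished easy diagonal, and the short-interval divisor sum $\sum_{t<a\le t+N}d(a)$ picked up by your argument is only $O(N\log p+\sqrt{p})$ in general, which ruins the error term when $|A||B|$ is small compared with $p$. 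Any complete proof must be invariant under shifting the intervals, and your decomposition is not.
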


%We need a simple result, see, e.g., \cite[Lemma 4.1]{SZ_Kloosterman}.

%\begin{lemma}
%	For any integers $1 \le X,Y < p$ and $\la \in \F^*_p$, the congruence 
%\[
%	xy \equiv \la \pmod \,, \quad \quad 1\le |x| \le X,\, 1 \le |y| \le Y 
%\]
%	has at most $(XY/p+1) p^{o(1)}$ solutions.
%\label{l:hyp_AP}
%\end{lemma}

%\begin{theorem}
%\end{theorem}

The 
%next 
last 
result about Fourier transform of arithmetic progressions is well--known. 

\begin{lemma}
	Let $P$ be an arithmetic progression. Then $\| P \|_W \ll \log p$ and for any $c>1$ the following holds 
	\[
	p^{-1} \sum_{\xi \in \F_p} |\FF{P} (\xi)|^c \ll |P|^{c-1} \,.  
	\]
	\label{l:P_norms}
\end{lemma}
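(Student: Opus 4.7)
The plan is to reduce to the standard progression $\{0,1,\dots,N-1\}$, obtain a pointwise bound on $|\widehat{P}(\xi)|$ via the geometric series formula, and then split the summation over frequencies.

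First I would note that translation and non-zero dilation of $P$ only permute the frequencies (up to a unimodular factor), so it suffices to prove both bounds for $P = \{0,1,\dots,N-1\}$ with $N=|P| \le p$. For such $P$ the geometric series identity gives
\[
\widehat{P}(\xi) = \sum_{k=0}^{N-1} e(-\xi k) = e\!\left(-\tfrac{\xi(N-1)}{2}\right) \cdot \frac{\sin(\pi \xi N/p)}{\sin(\pi \xi/p)} \quad (\xi \neq 0),
\]
and $\widehat{P}(0)=N$. Writing $\|\xi/p\|$ for the distance to the nearest integer, the elementary bound $|\sin(\pi t)| \ge 2\|t\|$ combined with $|\sin(\pi \xi N/p)| \le \min(1,\pi N\|\xi/p\|)$ yields the familiar estimate
\[
|\widehat{P}(\xi)| \ll \min\!\left(N,\ \frac{p}{|\xi|_p}\right), \qquad |\xi|_p := \min(\xi,\,p-\xi).
\]

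Next I would plug this into the two sums and split the range at $|\xi|_p \asymp p/N$. For any $c \ge 1$,
\[
\sum_{\xi \in \F_p} |\widehat{P}(\xi)|^c \ll \sum_{|\xi|_p \le p/N} N^c \;+\; \sum_{p/N < |\xi|_p \le p/2} \left(\frac{p}{|\xi|_p}\right)^c .
\]
The first sum contains $O(p/N)$ terms and contributes $O(p N^{c-1})$. The second sum is $p^c \sum_{p/N < n \le p/2} n^{-c}$, which for $c=1$ is $O(p \log p)$ and for $c>1$ is $O(p^c \cdot (p/N)^{1-c}) = O(p N^{c-1})$ (with an implicit constant depending on $c$, absorbed in $\ll$). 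Dividing through by $p$ delivers $\|P\|_W \ll 1 + \log p \ll \log p$ for the $c=1$ case, and $p^{-1}\sum_\xi |\widehat{P}(\xi)|^c \ll N^{c-1} = |P|^{c-1}$ for $c>1$.

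There is essentially no main obstacle: this is a textbook computation. The only small care point is the split at the right threshold $p/N$, since choosing it too large would cost a factor of $N$ in the first part and choosing it too small would lose a factor of $\log$ in the tail for $c>1$; the threshold $p/N$ balances the two ranges and is what produces the sharp exponent $|P|^{c-1}$.
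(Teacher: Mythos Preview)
Your argument is correct and is exactly the standard computation one would expect here. The paper does not actually prove this lemma; it introduces it with the phrase ``The last result about Fourier transform of arithmetic progressions is well--known'' and states it without proof, so there is nothing to compare against beyond noting that your proof is the textbook one implicitly being invoked.
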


\section{Some non--abelian results}
\label{sec:non-commutative}

%\bigskip
%$\hfill\Box$

We will formulate and  prove a series of results, which hold in general groups although, of course, our main applications concerns $\SL_2 (\F_p)$ and $\SL_2 (\Z)$.

Let $\Gr$ be a group and $A_1,\dots, A_{2k} \subseteq \Gr$ be sets.
For $k\ge 2$ put 
\begin{equation}\label{def:T_k}
\T_{k} (A_1,\dots, A_{2k}) = |\{ a_1 a^{-1}_2  \dots a_{k-1} a^{-1}_k = a_{k+1} a^{-1}_{k+2}  \dots a_{2k-1} a^{-1}_{2k} ~:~ a_j \in A_j \}| \,.
\end{equation}
More generally, one can define $\T_{k} (f_1,\dots, f_{2k})$ for any functions  $f_1,\dots,f_{2k} : \Gr \to \C$.
Basically, we are interested in the case of the characteristic functions $f_j$. 
If $k=2$, then  we write $\E$ instead of $\T_2$ as in Section \ref{sec:definitions}. 
For any $g\in \Gr$ one has 
\begin{equation}\label{f:T_k_g}
\T_{k} (g A_1,\dots, g A_{2k}) = \T_{k} (A_1,\dots, A_{2k}) 
~\mbox{ and }~  
\T_{k} (A_1 g,\dots, A_{2k} g) = \T_{k} (A_1,\dots, A_{2k}) \,.
\end{equation}
One of the reasons that we have defined $\T_{k} (A_1,\dots, A_{2k})$ as in \eqref{def:T_k} 
%(let us call coin the term "teethed"\, way, of defining $\T_k$)
%(we say that it is a {\it canonical} way of writing variables)
is that we have 
%invariance 
property \eqref{f:T_k_g}.
%Clearly, 
For example, 
if $\T_{k} (A_1,\dots, A_{2k})$ was defined as just the number of the solutions to the equation 
$
a_1 \dots a_k = a_{k+1} \dots a_{2k},
$
then formula \eqref{f:T_k_g} fails. 
Another reason is that for such defined $\T_k$ Lemmas \ref{l:T_2^k}, \ref{l:actions} take place.
Finally, in terms of eigenvalues of some operators, see the proof of Lemma \ref{l:actions} and Remark \ref{r:T_k_non-abelian}, we have for such $\T_k$ a full analogue with the abelian case, compare formula \eqref{def:T_k_ab} and formula \eqref{f:T_k_eigenvalues}.

Now if $A_1=\dots = A_{2k} = A$, then we write $\T_{k} (A)$ for  $\T_{k} (A,\dots, A)$ and similarly $\T_{k} (f)$ for $\T_{k} (f,\dots, f)$.
It is convenient to put $\T_1 (A) = |A|^2$. 
Also, denote $\T_{2k} (A,B) := \T_{2k} (A,B, \dots, A,B)$.  
%One can easily check that besides $\T_{k} (A) = \T_{k} (gA) = \T_{k} (Ag)$, $g\in \Gr$  one has $\T_{k} (A) = \T_{k} (A^{-1})$. 
Since $\T_{k} (A) = \T_{k} (gA) = \T_{k} (Ag)$ for any  $g\in \Gr$, it follows that in any matrix group 
(as $\SL_n$) an arbitrary permutation of rows or columns preserves  $\T_k$. 
Also, notice that  $\T_{k} (A) = \T_{k} (A^{-1})$. 
Further, $\T_{k} (A) \le |A|^{2(k-l)} \T_{l} (A)$, $l\le k$ because the operator, which fix any $l$ positions from the left side and from the right side in \eqref{def:T_k} is, clearly, symmetric and nonnegatively defined
(obviously, one has $\sum_{x,y\in M} \T(x,y) \le |M| \sum_{x\in M} \T(x,x)$ for any nonnegative operator $\T(x,y)$ defined on a set $M$). 
Using the Cauchy--Schwarz inequality, we have
\begin{equation}\label{f:r_infty}
\| r_{(AA^{-1})^{k}} \|_\infty ,\, \| r_{(A^{-1}A)^{k}} \|_\infty \le \T_{k} (A) \,.
\end{equation}
Further for $k\ge 1$  consider the higher energies \cite{SS1} 
\[
\E^R_k (A) = \sum_{x} r^{k}_{AA^{-1}} (x) = \sum_{x_1,\dots,x_{k-1}} |A \cap Ax_1 \cap \dots \cap A x_{k-1}|^2 
\]
and, similarly, $\E^L_k (A)$.

\bigskip

We need in a lemma about quantities $\T_k (A_1,\dots, A_{2k})$.
% which we prove for simplicity just in the case of dyadic indexes.

\begin{lemma}
	Let $f_1,\dots,f_{2k} : \Gr \to \C$ be functions.
	Then
	\begin{equation}\label{f:T_2^k}
	\T^{2k}_{k} (f_1,\dots, f_{2k}) \le \prod_{j=1}^{2k} \T_{k} (f_j) \,.
	\end{equation}
	In particular, for any $A,B,C,D \subseteq \Gr$ one has 
	\begin{equation}\label{f:T_2^k_E}
	\E(A,B,C,D)^4 \le \E(A) \E(B) \E(C) \E(D) \,.
	\end{equation}
	\label{l:T_2^k}
\end{lemma}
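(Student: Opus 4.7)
The plan is to apply the Cauchy--Schwarz inequality iteratively. For any functions $h_1,\dots,h_k:\Gr\to \C$, introduce the one--sided counting function
\[
R(h_1,\dots,h_k)(x) := \sum_{a_1 a_2^{-1} \cdots a_{k-1} a_k^{-1} = x} h_1(a_1) h_2(a_2) \cdots h_k(a_k),
\]
so that by the very definition of $\T_k$ one has the inner--product representation
\[
\T_k(f_1,\dots,f_{2k}) \;=\; \sum_{x\in\Gr} R(f_1,\dots,f_k)(x)\, R(f_{k+1},\dots,f_{2k})(x),
\]
and in particular $\T_k(g_1,\dots,g_k,g_1,\dots,g_k) = \|R(g_1,\dots,g_k)\|_2^2$. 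A first Cauchy--Schwarz applied to this inner product immediately yields
\[
\T_k(f_1,\dots,f_{2k})^2 \;\le\; \T_k(f_1,\dots,f_k,f_1,\dots,f_k)\cdot \T_k(f_{k+1},\dots,f_{2k},f_{k+1},\dots,f_{2k}).
\]

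Therefore the theorem reduces to the symmetric estimate
$\T_k(g_1,\dots,g_k,g_1,\dots,g_k)^k \le \prod_{j=1}^k \T_k(g_j)$, since substituting it into the inequality above (after raising to the $k$-th power) yields the claim $\T_k^{2k}(f_1,\dots,f_{2k}) \le \prod_{j=1}^{2k}\T_k(f_j)$. To establish the symmetric estimate, iterate Cauchy--Schwarz once for each $j\in\{1,\dots,k\}$, decoupling the pair $(a_j,b_j)$ at a time: using the translation invariances \eqref{f:T_k_g}, rewrite the defining equation $a_1 a_2^{-1}\cdots a_k^{-1} = b_1 b_2^{-1}\cdots b_k^{-1}$ so that the $g_j$'s sit in an outermost position and can be factored out; Cauchy--Schwarz then bounds the symmetric quantity by $\T_k(g_j)^{1/k}$ times a residual symmetric quantity in which $g_j$ has been replaced, and iterating over $j$ together with a geometric--mean argument produces the desired $k$-th power bound.

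For the Corollary \eqref{f:T_2^k_E} (the case $k=2$) the argument is fully explicit and needs only two Cauchy--Schwarz steps: the first gives $\E(A,B,C,D)^2 \le \E(A,C,A,C)\cdot\E(B,D,B,D)$, while the rearrangement $ab^{-1}=cd^{-1}\Leftrightarrow a^{-1}c=b^{-1}d$ expresses $\E(A,C,A,C)=\sum_x p_A(x) p_C(x)$ with $p_A(x)=\sum_{a^{-1}c=x}A(a)A(c)$, so a second Cauchy--Schwarz gives $\E(A,C,A,C)^2 \le \E(A)\E(C)$, and analogously $\E(B,D,B,D)^2 \le \E(B)\E(D)$; multiplying the three inequalities yields \eqref{f:T_2^k_E}. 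The main obstacle is the symmetric inequality for general $k\ge 3$: the abelian analogue is immediate from Plancherel and H\"{o}lder on the Fourier side via \eqref{def:T_k_ab}, but in a non--abelian group $\Gr$ one cannot diagonalise the convolution structure directly and must perform all manipulations at the group level, carefully invoking the translation invariance \eqref{f:T_k_g} at each step to justify the decoupling.
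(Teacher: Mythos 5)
Your opening move (the inner--product representation of $\T_k$ plus one Cauchy--Schwarz, reducing everything to the ``symmetric'' quantities $\T_k(g_1,\dots,g_k,g_1,\dots,g_k)$) is exactly the paper's, and your treatment of the case $k=2$ is in substance the paper's as well: rearrange, recognise the symmetric energy as $\sum_x r_{A^{-1}A}(x)r_{C^{-1}C}(x)$, and apply Cauchy--Schwarz once more. A small bookkeeping remark: the pairing that comes out of \emph{your} inner--product representation is $\E(A,B,C,D)^2\le\E(A,B)\,\E(C,D)$, not $\E(A,C,A,C)\,\E(B,D,B,D)$; either route closes, but in a non--abelian group $\E(A,C,A,C)=\sum_x r_{A^{-1}A}(x)r_{C^{-1}C}(x)$ and $\E(A,A,C,C)=\sum_x r_{AA^{-1}}(x)r_{CC^{-1}}(x)$ are genuinely different quantities, so you should fix one rearrangement and carry it through consistently.

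The case $k\ge 3$, which is where the whole content of the lemma lies, is not proved by your sketch, and the step you describe does not exist as stated. If you isolate the pair of variables carrying the weight $g_1$ by writing the symmetric equation as $a_1^{-1}b_1=W\,\overline{W}^{-1}$, where $W,\overline{W}$ are the words in the remaining letters, then Cauchy--Schwarz yields the factor $\bigl(\sum_x r^2_{g_1^{-1}g_1}(x)\bigr)^{1/2}=\E(g_1)^{1/2}$ --- a \emph{second}--order energy, not $\T_k(g_1)^{1/k}$ --- and the other factor is $\bigl(\sum_x N^2(x)\bigr)^{1/2}$ with $N(x)=|\{W\overline{W}^{-1}=x\}|$, a count of words of length $2(k-1)$ on each side; this is a $\T_{2(k-1)}$--type quantity in the remaining functions, not ``a residual symmetric quantity in which $g_j$ has been replaced.'' The shapes do not match, the iteration over $j$ does not close, and no geometric--mean argument is available. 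You correctly identify that the Plancherel--H\"older proof of the abelian analogue \eqref{def:T_k_ab} does not transfer, but you then need an actual substitute. The paper supplies one via a two--level induction: the identity $\T^k_k(A_1,\dots,A_{2k})=\T^k_{k/2}(r_{A_1A_2^{-1}},\dots,r_{A_{2k-1}A_{2k}^{-1}})$ reduces the general statement (by induction on $k$) to the two--set inequality $\T^2_k(A,B)\le\T_k(A)\T_k(B)$, and that inequality is then obtained by the bracketing $(\bar a_1^{-1}a_1)b_1^{-1}a_2\cdots a_s=\bar b_1^{-1}\cdots\bar a_s(\bar b_s^{-1}b_s)$, which gives the recursive bound $\T^k_k(A,B)\le\T_k(A)\T_k(B)\T^{k-2}_k(B,A)$ and, combined with its mirror image, closes the loop. (Alternatively one can argue spectrally via $\T_k(G)=|\Gr|^{-1}\tr(\T^k)$ for the operator $\T(g,h)=r_{GG^{-1}}(gh^{-1})$, as in Remark \ref{r:T_k_non-abelian}.) Some device of this kind is indispensable; as written, your argument for $k\ge 3$ is a gap.
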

\begin{proof}
	For typographical reasons we will  assume sometimes that $f_j = A_j$ for some sets $A_j \subseteq \Gr$. 
	Clearly, by the Cauchy--Schwarz inequality for any $l$ 
	\[
	\T^2_{l} (A_1,\dots, A_{2l}) \le \T_{l} (A_1,\dots, A_{l}, A_1,\dots, A_{l}) \T_{l} (A_{l+1},\dots, A_{2l}, A_{l+1},\dots, A_{2l})    
	\]
	and thus it is enough to have deal with the last quantities. 
	Let us begin with  \eqref{f:T_2^k_E}
	% for 
	because  its 
	simplicity and to 
	%obtain  
	have 
	the basis of the induction. 
	From the last bound we see that $\E(A,B,C,D) \le \E(A,B) \E(C,D)$. 
	Further, 
	%we have 
	using the Cauchy--Schwarz inequality again, we have 
	\[
	\E^2(A,B) = \E(A,B,A,B)^2 = |\{ a b^{-1} = \bar{a} \bar{b}^{-1} ~:~ a,\bar{a} \in A,\, b,\bar{b} \in B \}|^2 
	= 
	\]
	\[
	= |\{ \bar{a}^{-1} a = \bar{b}^{-1} b ~:~ a,\bar{a} \in A,\, b,\bar{b} \in B\}|^2 \le \E(A) \E(B)
	\] 
	as required.
	Clearly, the same is true for functions. 
	%	Having 
	%$\T_{l} (A_1,\dots, A_{l}, A_1,\dots, A_{l})$.  
	%	$\T^2_{l} (A_1,\dots, A_{2l})$ we split $A_1,\dots, A_{2l}$ onto four parts $A,B,C,D$ of the same size of consecutive elements  and use the previous arguments for $E(A,B,C,D)$ (or, alternatively, split )
	Now let $k=2s$ is even (for odd $k$ a similar arguments hold). 
	Using  induction we obtain 
	\begin{equation}\label{tmp:30.04_1}
	\T^{k}_{k} (A_1,\dots, A_{2k}) = \T^k_{k/2} (r_{A_1 A^{-1}_2}, \dots, r_{A_{2k-1} A^{-1}_{2k}}) 
	\le 
	\prod_{j=1}^{k} \T_{k/2} (r_{A_{2j-1} A^{-1}_{2j}}) 
	= \prod_{j=1}^{k} \T_{k} (A_{2j-1}, A_{2j}) 
	%\,.
	\end{equation}
	and hence it is enough to prove for any $A$ and $B$ that
	\[
	\T^2_{k} (A,B) \le \T_{k} (A) \T_{k} (B) \,.
	\]
	%	If $k$ is odd, then just join, say,  $A_{2k}$ in def:T_k}
	%\[
	%	\T^{k/2}_{k} (A_1,\dots, A_{2k}) \le \T_k (A_{2k}) \prod_{j=1}^{k/2} \T_{k} (A_{2j},A_{2j+1}) 	
	%\]
	Now rewrite $\T_k (A,B) = \T_{2s} (A,B)$ as 
	\[
	(\bar{a}^{-1}_1 a_1) b^{-1}_1 a_2 b^{-1}_2 \dots a_s = \bar{b}^{-1}_1 \bar{a}_2 \bar{b}^{-1}_2 \dots \bar{a}_s (\bar{b}^{-1}_s b_s) 
	\]
	and using induction again and the arguments as in \eqref{tmp:30.04_1}, we obtain
	\[
	\T^k_{k} (A,B) \le \T_k (A) \T_k (B) \T^{k-2}_k (B,A) 
	%= \T_k (A) \T_k (B) \T^{k-2}_k (A,B) 
	\]
	and, similarly,
	\[
	\T^k_k (B,A) \le \T_k (A) \T_k (B) \T^{k-2}_k (A,B) \,.
	\]
	Combining the last two formulae, we get
	\[
	\T^{k^2} (A,B) \le \T^k_k (A) \T^k_k (B) \left( \T_k (A) \T_k (B) \T^{k-2}_k (A,B) \right)^{k-2} 
	=
	\T^{2k-2}_k (A) \T^{2k-2}_k (B) \T^{(k-2)^2}_k (A,B) 
	\]
	as required.
	%	This completes the proof.
	$\hfill\Box$
\end{proof}

%\bigskip 

\begin{corollary}
	The formula $\| f \| := \T_k (f)^{1/2k}$, $k\ge 2$ defines a norm of an arbitrary function $f: \Gr \to \C$.   
	Also, $\T_k (f)^{1/2k} \ge \| f\|_{2k}$. 
	\label{c:non-abelian_norm}
\end{corollary}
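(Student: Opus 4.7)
The plan is to check the three norm axioms (homogeneity, positive-definiteness, triangle inequality) for $\|\cdot\| := \T_k(\cdot)^{1/(2k)}$, and then derive the $L^{2k}$ lower bound from a ``diagonal'' contribution to $\T_k(f)$, using the multilinearity of $\T_k$ together with Lemma \ref{l:T_2^k}.

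Homogeneity is immediate once one fixes the natural extension of $\T_k$ to complex-valued arguments: it is multilinear, being linear in the arguments coming from the left-hand side of the defining equation of \eqref{def:T_k} and conjugate-linear in those coming from the right-hand side, so $\T_k(\lambda f) = |\lambda|^{2k}\T_k(f)$ and $\|\lambda f\| = |\lambda|\|f\|$. For the $L^{2k}$ bound I would single out in $\T_k(f)$ the ``diagonal'' tuples satisfying $a_{k+j} = a_j$ for $j = 1, \dots, k$, which are trivial solutions of the defining equation; they alone contribute
\[
\sum_{a_1, \dots, a_k \in \Gr} |f(a_1)|^2 \cdots |f(a_k)|^2 = \|f\|_2^{2k}.
\]
All remaining terms aggregate into a non-negative Hermitian form -- equivalently $\T_k(f) = \sum_x |r(x)|^2$ where $r$ is the weighted representation function of the half-word $a_1 a_2^{-1} \cdots a_{k-1} a_k^{-1}$ -- so $\T_k(f) \ge \|f\|_2^{2k} \ge \|f\|_{2k}^{2k}$, the last step being monotonicity of $L^p$-norms with respect to the counting measure. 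This simultaneously yields positivity and definiteness.

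The triangle inequality is the heart of the argument and follows from Lemma \ref{l:T_2^k} via a Minkowski/binomial expansion. Using multilinearity,
\[
\T_k(f+g) = \sum_{\epsilon \in \{f,g\}^{2k}} \T_k(\epsilon_1, \dots, \epsilon_{2k}),
\]
and Lemma \ref{l:T_2^k} bounds each summand by $\T_k(f)^{a/(2k)} \T_k(g)^{(2k-a)/(2k)}$, where $a := |\{j : \epsilon_j = f\}|$. Summing and grouping by $a$, the binomial theorem compresses the right-hand side to $\bigl(\T_k(f)^{1/(2k)} + \T_k(g)^{1/(2k)}\bigr)^{2k}$, and extracting $(2k)$-th roots gives $\|f+g\| \le \|f\| + \|g\|$. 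The only genuinely delicate point is a bookkeeping one: one must match the conjugation/inversion pattern of the multilinear expansion with the hypotheses of Lemma \ref{l:T_2^k}, so that each individual cross-term $\T_k(\epsilon_1, \dots, \epsilon_{2k})$ is controlled in absolute value by the geometric mean of the diagonal energies. Once this alignment is in place, the argument reproduces the familiar $\ell^{2k}$-Minkowski proof in the abelian setting.
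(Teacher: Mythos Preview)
Your derivation of the triangle inequality is exactly what the paper intends: expand $\T_k(f+g)$ by multilinearity, bound each cross-term in absolute value via Lemma \ref{l:T_2^k}, and recombine with the binomial theorem. Homogeneity is likewise fine. The paper gives no proof of the corollary, and this is the natural reading of ``corollary of Lemma \ref{l:T_2^k}''.

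There is, however, a genuine gap in your argument for $\T_k(f)\ge\|f\|_2^{2k}$ (and hence for positive-definiteness). You isolate the diagonal tuples $a_{k+j}=a_j$ and then assert that ``all remaining terms aggregate into a non-negative Hermitian form --- equivalently $\T_k(f)=\sum_x|r(x)|^2$''. That identity only shows $\T_k(f)\ge 0$; it does \emph{not} show that the off-diagonal contribution is non-negative. Concretely, write $\T_k(f)=V^*MV$ with $V_{\mathbf a}=f(a_1)\overline{f(a_2)}\cdots$ and $M_{\mathbf a,\mathbf a'}=\mathbf 1[w(\mathbf a)=w(\mathbf a')]$; then $M$ is positive semidefinite but $M-I$ is not (on each fibre of $w$ the matrix $M$ is all-ones, so $M-I$ has eigenvalue $-1$ with large multiplicity). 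Your diagonal equals $V^*IV=\|f\|_2^{2k}$, and one cannot conclude $V^*(M-I)V\ge 0$ merely from $M\succeq 0$; the tensor structure of $V$ must be used.

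A clean fix uses the spectral picture the paper itself sets up. In the notation of Remark \ref{r:T_k_non-abelian}, the convolution operator attached to $f$ is Hermitian positive semidefinite with eigenvalues $\mu_\alpha\ge 0$, and $\T_k(f)=|\Gr|^{-1}\sum_\alpha\mu_\alpha^{\,k}$ while $\|f\|_2^{2}=|\Gr|^{-1}\sum_\alpha\mu_\alpha$. The power-mean (H\"older) inequality then gives $\T_k(f)\ge\bigl(|\Gr|^{-1}\sum_\alpha\mu_\alpha\bigr)^k\cdot|\Gr|^{\,0}$, in other words $\T_k(f)^{1/k}\ge\|f\|_2^{2}$; equivalently, $k\mapsto\log\T_k(f)$ is convex. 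Your final step $\|f\|_2\ge\|f\|_{2k}$ (monotonicity of $\ell^p$ norms for counting measure) then yields $\T_k(f)^{1/2k}\ge\|f\|_{2k}$ and, in particular, positive-definiteness.
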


%\bigskip 

We need a well--known lemma, which we prove for the sake of completeness.

\begin{lemma}
	Let $\Gr$ be a group and let $\Gr$ acts $k$--transitively   on a set $X$.
	Suppose that $G\subseteq \Gr$ and $A,B\subseteq X$ are sets.
	Then
	\begin{equation}\label{f:k-transitive}
	\sum_{g\in G} \sum_{x\in B} A(gx) \le |G|^{1-\frac{1}{k}} |A||B| + |G| \,.
	\end{equation}
	\label{l:k-transitive} 
\end{lemma}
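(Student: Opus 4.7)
The plan is to reduce the claim to a $k$-th moment estimate via Hölder's inequality and then to control that moment by using the $k$-transitivity hypothesis. Writing $t(g) := |B \cap g^{-1}A|$, so that the quantity to be bounded is $N := \sum_{g \in G} t(g)$, Hölder's inequality immediately yields
\[
N^k \le |G|^{k-1}\, M_k, \qquad M_k := \sum_{g \in G} t(g)^k,
\]
and the problem reduces to bounding $M_k = |\{(g, x_1, \dots, x_k) \in G \times B^k : g x_i \in A\ \forall i\}|$.

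I would split the count defining $M_k$ according to whether the coordinates $x_1, \dots, x_k$ are pairwise distinct. In the distinct case the $k$-transitivity of $\Gr$ on $X$ is decisive: for any fixed ordered tuples $(x_1,\dots,x_k) \in B^{(k)}$ and $(y_1,\dots,y_k) \in A^{(k)}$, the set $\{g \in \Gr : gx_i = y_i\ \forall i\}$ is a single coset of the pointwise stabiliser of $(x_1, \dots, x_k)$, which has bounded cardinality (equal to $1$ in the sharply $k$-transitive case that covers the applications of interest, e.g.\ $\SL_2(\F_p)$ or $\SL_2(\Z)$ acting on the projective line). Summing over the two tuples therefore contributes at most $O(|A|^k |B|^k)$ to $M_k$. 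For tuples with at least one coincidence, a Stirling-type identity gives $t^k - t^{(k)} = O_k(t^{k-1})$, and iterating this on $M_{k-1}, M_{k-2}, \dots, M_1 = N$ produces
\[
M_k \ll_k |A|^k |B|^k + N.
\]

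Substituting into the Hölder estimate yields $N^k \ll_k |G|^{k-1} |A|^k |B|^k + |G|^{k-1} N$, and I would conclude with a standard bifurcation: either the first term dominates, in which case $N \ll |G|^{1-1/k}|A||B|$ after taking $k$-th roots, or the second term dominates, in which case $N^{k-1} \ll |G|^{k-1}$ and hence $N \ll |G|$. Combining the two alternatives and invoking $(a+b)^{1/k}\le a^{1/k}+b^{1/k}$ delivers the claimed inequality \eqref{f:k-transitive}.

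The step I expect to be the main obstacle is the distinct-tuple estimate: one must argue that $k$-transitivity reduces the count of $g$'s carrying a given ordered $k$-tuple in $B$ into $A$ essentially to the product $|B|^{(k)}|A|^{(k)}$. The subtlety is that the pointwise stabiliser of a $k$-tuple need not be trivial in a merely $k$-transitive action, but in the applications of the paper this stabiliser has $O(1)$ cardinality (for $\SL_2$ on the projective line with $k=3$ it is $\{\pm I\}$), so the resulting absolute constant can be absorbed into the implicit constant of \eqref{f:k-transitive}.
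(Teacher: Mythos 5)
Your proposal follows the same route as the paper's proof: H\"older's inequality reduces the count $N$ to the $k$-th moment $M_k=\sum_{g\in G}t(g)^k$, $k$-transitivity bounds the main part of $M_k$ by (a constant times) $|A|^k|B|^k$ plus an error of size $O(N)$, and the final bifurcation applied to $N^k\ll|G|^{k-1}|A|^k|B|^k+|G|^{k-1}N$ is exactly how the paper concludes. Your treatment of the main term is, if anything, more scrupulous than the paper's: the paper simply asserts that for $\vec x\in B^k$ and $\vec a\neq\vec x$ in $A^k$ there is a \emph{unique} $g$ with $g\vec x=\vec a$, which tacitly assumes both sharpness of the transitivity and distinctness of the coordinates, whereas you isolate the distinct tuples and flag the pointwise-stabiliser issue explicitly.

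The one step I would not accept as written is the disposal of the degenerate tuples by ``iterating on $M_{k-1},M_{k-2},\dots$''. Carried out literally, that induction needs $\sum_{g}t(g)^{(j)}\ll|A|^j|B|^j$ (with $t^{(j)}$ the falling factorial) for the intermediate values $1<j<k$ as well, i.e.\ bounded pointwise stabilisers of $j$-tuples; but for $j<k$ these stabilisers are typically large even when the $k$-point stabiliser is trivial --- for $\mathrm{PGL}_2$ acting on the projective line the stabiliser of one or two points has order comparable to $p^2$ or $p$ --- so the chain of estimates does not close. The fix is to bypass the intermediate moments entirely: for integers $t\ge0$ one has $t^k\le 2^k t^{(k)}+(2k)^{k-1}t$ (treat $t\ge 2k$ and $t<2k$ separately), whence $M_k\ll_k\sum_g t(g)^{(k)}+N\ll_k|A|^k|B|^k+N$ in a single step; this is also, in effect, how the paper's own (rather terse) accounting of the diagonal term $+\,\sigma$ should be read. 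With that substitution your argument is complete and coincides with the paper's, up to the $k$-dependent constants that the clean form of \eqref{f:k-transitive} quietly suppresses.
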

\begin{proof}
	Using the H\"older inequality, we get
	\[
	\sigma^k := 
	\left( \sum_{g\in G} \sum_{x\in B} A(gx) \right)^k \le |G|^{k-1} \sum_{g\in G} \left( \sum_{x\in B} A(gx) \right)^k
	=
	|G|^{k-1} \sum_{x_1,\dots,x_k \in B}\, \sum_{g\in G} A(g x_1) \dots A(g x_k) \,. 
	\]
	By the assumption $\Gr$ acts $k$--transitively  on $X$.
	Hence fixing $(x_1,\dots,x_k) \in B^k$ and $(x_1,\dots,x_k) \neq (a_1,\dots,a_k) \in A^k$, we find a unique $g\in \Gr$ such that $a_j = g x_j$, $j\in [k]$. 
	Thus
	\[
	\sigma^k \le |G|^{k-1} |A|^k |B|^k + |G|^{k-1} \sigma \,. 
	\]
	It gives us
	\[
	\sigma \le |G|^{1-\frac{1}{k}} |A||B| + |G| 
	\]
	as required.
	%	This completes the proof.
	$\hfill\Box$
\end{proof}

\bigskip

The well--known 
%general 
"counting lemma"\,
% estimate 
for general actions was proved many times, see, e.g., \cite{B_hyp} or \cite[Lemma 53]{sh_as}. 
We recall the proof for the case of completeness and because we will use some parts of the proofs later. 
%As we will see $2^k$ can be replaced to any even integer $n\ge 2$ for an arbitrary finite group.
Also, we replace $2^k$ in \eqref{f:actions}  to any even integer $n\ge 2$ for an arbitrary finite group.

\begin{lemma}
	Let $\Gr$ be a group, which acts on a set $X$ and let $f_1, f_2 : X \to \C$ be functions. 
	Also, let $G \subset \Gr$ be a set. 
	Then  for any $k\ge 1$, we get 
	\begin{equation}\label{f:actions}
	\left| \sum_{g \in G} \sum_x f_1 (x) f_2 (g x) \right|^{2^k} 
	%\frac{|G| \langle  f_1 \rangle \langle  f_2 \rangle }{|X|}
	\le 
	\| f_1\|^{2^{k}}_2 \| f_2\|^{2^{k}-2}_2 \cdot \sum_g r_{(GG^{-1})^{2^{k-1}}} (g) \sum_x f_2 (x) \overline{f_2 (g x)} \,.
	\end{equation}
	The same is true in the case $|\Gr| < \infty$ if one replaces $2^k$ to any nonzero even integer.  
	\label{l:actions}
\end{lemma}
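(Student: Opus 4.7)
The plan is to prove \eqref{f:actions} by induction on $k$, each step doubling the exponent on the left via one Cauchy--Schwarz inequality on the relevant inner sum.

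For the base case $k=1$, set $\sigma:=\sum_{g\in G}\sum_x f_1(x)f_2(gx)$ and apply Cauchy--Schwarz in the $x$ variable to obtain $|\sigma|^2 \le \|f_1\|_2^2\sum_x|\sum_{g\in G} f_2(gx)|^2$. Expanding the inner square, interchanging summations, and performing the substitution $y=g_1 x$ (legitimate because each $g\in\Gr$ acts as a bijection of $X$) rewrites the double sum over $(g_1,g_2)\in G\times G$ as $\sum_h r_{GG^{-1}}(h)\sum_y f_2(y)\overline{f_2(hy)}$, with $h=g_2 g_1^{-1}$. This is \eqref{f:actions} at $k=1$.

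For the inductive step, denote by $T_k$ the right-hand energy factor of \eqref{f:actions}. Squaring the inequality at stage $k$ reduces the goal to proving $T_k^2 \le \|f_2\|_2^2\, T_{k+1}$. Writing $T_k=\sum_y f_2(y)\overline{\Phi(y)}$ with $\Phi(y)=\sum_h r_{(GG^{-1})^{2^{k-1}}}(h) f_2(hy)$, Cauchy--Schwarz in $y$ gives $|T_k|^2\le \|f_2\|_2^2\sum_y|\Phi(y)|^2$. Expanding $|\Phi(y)|^2$, applying the same substitution $z=h_1 y$, and using that $GG^{-1}$ is symmetric under inversion (so $h_1^{-1}$ again lies in $(GG^{-1})^{2^{k-1}}$) collapses each pair $(h_1,h_2)$ into a single $h=h_2 h_1^{-1}$ carrying multiplicity precisely $r_{(GG^{-1})^{2^k}}(h)$, producing $T_{k+1}$.

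For the generalization to arbitrary even $n=2m$ under $|\Gr|<\infty$, I would package the same argument operator-theoretically. Introduce the averaging operator $A:\ell^2(X)\to \ell^2(X)$, $(Af)(x)=\sum_{g\in G} f(gx)$, so that $\sigma=\langle f_1, Af_2\rangle$ and $B:=A^*A$ is self-adjoint and nonnegative; the substitution $y=g_1 x$ unfolds its powers as $\langle f_2,B^m f_2\rangle=\sum_h r_{(GG^{-1})^m}(h)\sum_x f_2(x)\overline{f_2(hx)}$. One Cauchy--Schwarz yields $|\sigma|^2\le\|f_1\|_2^2\langle f_2,Bf_2\rangle$, and H\"older's inequality applied to the spectral measure of $B$ gives $\langle f_2,Bf_2\rangle^m\le \|f_2\|_2^{2m-2}\langle f_2,B^m f_2\rangle$; combining the two delivers $|\sigma|^{2m}\le\|f_1\|_2^{2m}\|f_2\|_2^{2m-2}\langle f_2,B^m f_2\rangle$, which is the extended inequality. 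The main obstacle throughout is purely bookkeeping: tracking that the product $h_2 h_1^{-1}$, and its iterates under repeated substitution, ranges over $(GG^{-1})^{2^k}$ with exactly the multiplicities announced, which rests on nothing deeper than $(GG^{-1})^{-1}=GG^{-1}$.
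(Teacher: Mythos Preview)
Your proof is correct and follows the same approach as the paper: iterated Cauchy--Schwarz for the exponent $2^k$, followed by a spectral/H\"older argument for general even exponents. The only variation is that the paper diagonalizes the convolution operator $\T(g,h)=r_{GG^{-1}}(gh^{-1})$ on $\ell^2(\Gr)$ (whence the hypothesis $|\Gr|<\infty$), whereas you work with $B=A^*A$ directly on $\ell^2(X)$---a slightly cleaner packaging that in fact only needs $|G|<\infty$.
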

\begin{proof}
	Denote by $\sigma$ the left--hand side of \eqref{f:actions}.
	Using the Cauchy--Schwarz, we obtain
	\begin{equation}\label{tmp:10.04_1}
	|\sigma|^2 \le \| f_1\|^2_2 \cdot \sum_x  \left| \sum_{g \in G} f_2 (g x) \right|^2
	=
	\| f_1\|^2_2  \cdot  \sum_g r_{GG^{-1}} (g) \sum_x f_2 (x) \overline{f_2 (g x)} \,. 
	\end{equation}
	Continuing this way, we get
	\[
	|\sigma|^{2^k} \le \| f_1\|^{2^{k}}_2 \| f_2\|^{2^{k}-2}_2 \cdot \sum_g r_{GG^{-1} \dots GG^{-1}} (g) \sum_x f_2 (x) \overline{f_2 (g x)} \,,
	\]
	where the term $GG^{-1}$ in $r_{GG^{-1} \dots GG^{-1}} (g)$ is taken $2^{k-1}$ times.
	Thus, \eqref{f:actions} follows.

	Let us give another proof for even powers and finite group $\Gr$.  
	%and firstly, we begin with the even powers.
	Returning to \eqref{tmp:10.04_1}, we have   
	\[
	|\sigma|^2 \le |\Gr|^{-1} \| f_1\|^2_2 \sum_{g,h} r_{GG^{-1}} (gh^{-1}) \sum_x f_2 (h x) \overline{f_2 (g x)} \,. 
	\]
	Consider a hermitian nonnegatively defined operator
	\begin{equation}\label{def:T}
	\T (g,h) = r_{GG^{-1}} (gh^{-1}) = \sum_{\a =1}^{|\Gr|} \mu_\a \_phi_\a (g) \ov{\_phi}_\a(h) \,, 
	\end{equation}
	where $\mu_\a \ge 0$ are eigenvalues and $\_phi_\a$ are correspondent eigenfunctions. 
	Thus
	\[
	|\sigma|^2 \le |\Gr|^{-1} \| f_1\|^2_2 \sum_{\a =1}^{|\Gr|} \mu_\a \left( \sum_x \left| \sum_{h} f_2 (h x) \ov{\_phi}_\a(h) \right|^2 \right)  \,.
	\] 
	Using the H\"older inequality and the orthogonality of the functions $\_phi_\a (g)$, we obtain
	\[
	|\sigma|^{2k} \le |\Gr|^{-k} \| f_1\|^{2k}_2 \sum_{\a =1}^{|\Gr|} \mu^k_\a \left( \sum_x \left| \sum_{h} f_2 (h x) \ov{\_phi}_\a(h) \right|^2 \right) 
	\cdot 
	\left( \sum_{\a =1}^{|\Gr|} \sum_x \left| \sum_{h} f_2 (h x) \ov{\_phi}_\a(h) \right|^2 \right)^{k-1}
	=
	\]
	\[
	= |\Gr|^{-k+1} \| f_1\|^{2k}_2  \sum_g r_{(GG^{-1})^k} (g) \sum_x f_2 (x) \overline{f_2 (g x)} 
	\cdot \left( \sum_h \sum_x |f_2 (h x)|^2 \right)^{k-1}
	= 
	\]
	\[
	=
	\| f_1\|^{2k}_2 \| f_2 \|^{2k-2}_2 \cdot \sum_g r_{(GG^{-1})^k} (g) \sum_x f_2 (x) \overline{f_2 (g x)} \,.
	\]
	%	It gives us the proof for the even powers.
	%	For arbitrary power we consider a decomposition of the operator $G(gh^{-1})$  similar to \eqref{def:T}. 
	%	%the singular decomposition 
	%%	\[
	%%		G(gh^{-1}) = \sum_{\a =1}^{|\Gr|} \mu^{1/2}_\a u_\a (g) \ov{v}_\a(h) \,.
	%%	\]
	%	After that we use \eqref{tmp:10.04_1} again as well as the fact that $\mu_\a \ge 0$  and apply the previous arguments.
	%	It requires to notice that anyway 
	%	and
	%\[
	%	\sum_{\a =1}^{|\Gr|} \mu^{n/2}_\a \left( \sum_x \left| \sum_{h} f_2 (h x) \ov{\_phi}_\a(h) \right|^2 \right)
	%\]
	This completes the proof.
	$\hfill\Box$
\end{proof}

\begin{remark}
	In terms of the eigenfunctions of the operator $\T$ from \eqref{def:T}, we have the following formula (let $|\Gr| < \infty$ for simplicity)
	\begin{equation}\label{f:T_k_eigenvalues}
	\T_{k} (G) = |\Gr|^{-1} \sum_{\a =1}^{|\Gr|} \mu^k_\a = |\Gr|^{-1}  \tr (\T^k) \,,
	\end{equation}
	and, clearly, $\T^k (g,h) = r_{(GG^{-1})^k} (gh^{-1})$. 
	\label{r:T_k_non-abelian}
\end{remark}

\section{First results on  incidences for hyperbolas}
\label{sec:hyp_first}

%\bigskip
%$\hfill\Box$

Take any $\la \neq 0$ and consider our basic equation 
\[
(y-a) (b-x) = \lambda 
\]
or, in other words, 
\begin{eqnarray}\label{f:basic_eq}
y = a + \frac{\lambda}{b-x} = g x \,,
\end{eqnarray}
where 
\[
u_a v_b = 
\left( {\begin{array}{cc}
	1 & a \\
	0 & 1 \\
	\end{array} } \right)
\left( {\begin{array}{cc}
	0 & \lambda \\
	-1 & b  \\
	\end{array} } \right)
=
\left( {\begin{array}{cc}
	-a & a b + \lambda \\
	-1 & b \\
	\end{array} } \right) = g \in G(A,B)  = G_\la (A,B) \,.
\]
Clearly, $\det (g) = \la \neq 0$ and hence in our main case  $\la=1$ we have   $G_1 (A,B) \subseteq \SL_2 (\F)$.  
Also, in the next Section we will consider the set 
\[G(A) =  \{ u_a v_a ~:~ a\in A \} \subseteq G(A,A) \,. \] 
Notice that $u_{a_1} u_{a_2} = u_{a_1+a_2}$ ("u"\, for a unipotent matrix from $\SL_2 (\F)$) and 
\[
v^{-1}_b = \lambda^{-1}
\left( {\begin{array}{cc}
	b & -\lambda \\
	1 & 0  \\
	\end{array} } \right)
\quad 
\mbox{ and }
\quad 
v_{b_1} v^{-1}_{b_2} = 
\left( {\begin{array}{cc}
	1 & 0 \\
	\la^{-1} (b_1-b_2) & 1  \\
	\end{array} } \right) = u^*_{\la^{-1} (b_1-b_2)} \in \SL_2 (\F) 
\,.
\]

\bigskip

Lemma below shows the connection between energy of a subset of $\SL_2 (\F)$ and the sum--product phenomenon. 
Formulae  \eqref{f:T_3(G)}, \eqref{f:T_2(G)} say us that any nontrivial upper bound for linear incidences in an arbitrary  field $\F$ implies a good upper estimate for  
$\T_2 (G_\la (A,B))$, $\T_3 (G_\la (A,B))$.

\begin{lemma}
	For any $A,B\subseteq \F$ and $\la \in \F$, $\la \neq 0$ one has
	\begin{equation}\label{f:T_3(G)}
	\T_3 (G_\la (A,B)) \le |A| |B| \sum_x r^{2}_{(A-A)(B-B)} (x) + |A|^4 |B|^4 \,.
	\end{equation}
	Besides 
	\begin{equation}\label{f:T_2(G)}
	\T_2 (G_\la (A,B)) \le |A|^2 \E^{+} (B) + |B|^2 \E^{+} (A) \,.
	\end{equation}
	%	Similarly, 
	\label{l:T_3(G)}
\end{lemma}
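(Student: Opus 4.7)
The plan is to convert the matrix equation defining $\T_3(G_\la(A,B))$ into scalar identities on the twelve parameters $(a_i,b_i)_{i=1}^{6}$, then to count the 8-tuples satisfying the main quadratic relation (which yields the sum $\sum_x r^2_{(A-A)(B-B)}(x)$) and observe that the remaining four parameters are, generically, uniquely determined.

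For the $\T_2$ bound we expand $g_1 g_2^{-1} = u_{a_1} u^*_{c_{12}} u_{-a_2}$, where $c_{ij}=\la^{-1}(b_i-b_j)$, and similarly on the right-hand side. An entry-by-entry comparison gives $c_{12}=c_{34}$, together with either $a_1=a_3$ and $a_2=a_4$ (when $c_{12}\neq 0$) or $a_1-a_2=a_3-a_4$ (when $c_{12}=0$). Summing the two cases yields $|A|^2\E^{+}(B)+|B|^2\E^{+}(A)$, which is \eqref{f:T_2(G)}.

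For $\T_3$ the analogous expansion reduces $g_1 g_2^{-1} g_3 = g_4 g_5^{-1} g_6$ to $u_{a_1}u^*_{c_{12}}u_{a_3-a_2}v_{b_3}=u_{a_4}u^*_{c_{45}}u_{a_6-a_5}v_{b_6}$. Computing the four matrix entries (one of which follows from the other three) produces three scalar relations, the principal one being
\[
\mathrm{(I)} \qquad (a_3-a_2)(b_1-b_2) = (a_6-a_5)(b_4-b_5),
\]
and, from the remaining entries,
\begin{align*}
\mathrm{(II)} &\quad (a_1-a_4)\bigl(\la+(a_3-a_2)(b_1-b_2)\bigr) = \la\bigl((a_6-a_5)-(a_3-a_2)\bigr), \\
\mathrm{(III)} &\quad \la\bigl((b_1-b_2)-(b_4-b_5)\bigr) = \bigl(\la+(a_3-a_2)(b_1-b_2)\bigr)(b_6-b_3).
\end{align*}
By definition, the number of 8-tuples $(a_2,a_3,a_5,a_6,b_1,b_2,b_4,b_5)\in A^4\times B^4$ satisfying (I) is exactly $\sum_x r^2_{(A-A)(B-B)}(x)$. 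In the generic case $\la+(a_3-a_2)(b_1-b_2)\neq 0$, equations (II) and (III) determine both $a_1-a_4$ and $b_6-b_3$ uniquely as functions of the 8-tuple, so each 8-tuple admits at most $|A|\cdot|B|$ extensions in $A^2\times B^2$, contributing the main term $|A||B|\sum_x r^2_{(A-A)(B-B)}(x)$.

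It remains to absorb the two degenerate branches into $|A|^4|B|^4$. When $b_1=b_2$, (I) forces $a_5=a_6$ or $b_4=b_5$; in either sub-case the residual constraints become purely additive and are bounded respectively by $|A||B|\E^{+}(A)\E^{+}(B)\le|A|^4|B|^4$ and by $|B|^3\T^{+}_3(A)\le|A|^5|B|^3$ (the latter absorbed either directly by $|A|^4|B|^4$ when $|A|\le|B|$, or by $|A||B|\sum_x r^2_{(A-A)(B-B)}(x)$ using the trivial lower bound $\sum_x r^2_{(A-A)(B-B)}(x)\gg|A|^2|B|^2\max(|A|,|B|)^2$ coming from $x=0$). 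In the remaining case $\la+(a_3-a_2)(b_1-b_2)=0$ with $b_1\neq b_2$, relations (I) and (III) force $a_3-a_2=a_6-a_5$ and $b_1-b_2=b_4-b_5$, and a closer inspection of the $(1,1)$-entry yields the linear condition $(b_1-b_2)(a_1-a_4)=(a_3-a_2)(b_6-b_3)$, giving at most $|A|^2|B|$ choices for $(a_1,a_4,b_3,b_6)$ per 8-tuple and a total contribution of at most $\min(|A|^5|B|^3,|A|^3|B|^5)\le|A|^4|B|^4$. The main obstacle is the matrix bookkeeping of step one, together with verifying that the three extracted relations really imply the fourth entry; once (I)--(III) are in hand, the counting is entirely elementary.
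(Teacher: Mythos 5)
Your argument is correct and is essentially the paper's own proof: you compute $g_1g_2^{-1}g_3=u_{a_1}u^{*}_{\la^{-1}(b_1-b_2)}u_{a_3-a_2}v_{b_3}$, read the quadratic relation $(a_3-a_2)(b_1-b_2)=(a_6-a_5)(b_4-b_5)$ off the lower--left entry (which produces $\sum_x r^{2}_{(A-A)(B-B)}(x)$), reconstruct the remaining parameters in at most $|A||B|$ ways when $\la+(a_3-a_2)(b_1-b_2)\neq 0$, and bound the degenerate branch by $|A|^4|B|^4$; the $\T_2$ computation is likewise identical. The only cosmetic remarks are that your ``$b_1=b_2$'' branch is already contained in the generic case (there $\la+(a_3-a_2)(b_1-b_2)=\la\neq 0$, so (II) and (III) still determine $a_1-a_4$ and $b_6-b_3$) and hence needs no separate treatment --- keeping it as a disjoint extra branch would only cost a harmless factor of $2$ --- and that the linear condition in the truly degenerate branch comes from the $(1,2)$-entry rather than the $(1,1)$-entry.
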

\begin{proof}
	Take three elements $g_1 = u_{a_1} v_{b_1}$, $g_2 = u_{a_2} v_{b_2}$, $g_3 = u_{a_3} v_{b_3}$ from $G_\la  (A,B)$. 
	Putting $\o_1 = \la^{-1} (b_1 - b_2)$, $\o_2 = a_3 - a_2$, we obtain
	\[
	g_1 g^{-1}_2 g_3 = u_{a_1} v_{b_1} v^{-1}_{b_2} u^{-1}_{a_2} u_{a_3} v_{b_3} = u_{a_1} u^*_{\o_1} u_{\o_2} v_{b_3} 
	=
	\]
	\[
	=
	\left( {\begin{array}{cc}
		-a_1 (\o_1 \o_2 +1) - \omega_2 & \la (1+a_1 \o_1) + b_3 (\o_2 + a_1 (1+\o_1 \o_2)) \\
		-(\o_1 \o_2 +1) & \la \o_1 + b_3 (\o_1 \o_2 +1)  \\
		\end{array} } \right)	\,.
	\] 
	If  $\o_1 \o_2 +1 \neq 0$, then 
	we reconstruct $a_1,b_3$, having the matrix above fixed. 
	%Finally, 
	Now it remains to notice  that
	\[
	|\{ \o_1 \o_2 = \o'_1 \o'_2 ~:~ \o_1 = \la^{-1} (b_1 - b_2), \o'_1 = \la^{-1} (b'_1 - b'_2), \o_2 = a_3 - a_2, \o'_2 = a'_3 - a'_2 \}| 
	=
	\]
	\[
	= 
	\sum_{x} r^{2}_{(A-A)(B-B)} (x) \,.  
	\]
	%as required.
	%	But if $\o_1 \o_2 +1 = 0$, then we just add the term $r^{2}_{(A-A)(B-B)} (-1)$ in the formula above. 
	%Now, 
	Further 
	if  $\o_1 \o_2 +1 = 0$, then $\o_1, \o_2 \neq 0$ and we can find, say, 	$a_1$, having $\o_1,\o_2$, $b_3$ and the matrix above fixed (see the right--up corner of the matrix above).
	%  are arbitrary and 
	Hence we need to count an additional term, which is at most 
	\[
	|A| |B|^2 \sum_{x\neq 0} r^{2}_{A-A} (x) r^{2}_{B-B} (-\la x^{-1}) \le |A| |B|^4  \E^{+} (A) \le |A|^4 |B|^4 \,.
	\]

	Similarly, to calculate $\T_2 (G_\la (A,B))$, we see that
	\[
	g_1 g^{-1}_2 = u_{a_1} u^*_{\o_1} u_{-a_2} 
	=
	\left( {\begin{array}{cc}
		1+a_1 \o_1 & a_1 - a_2 - a_1 a_2 \o_1 \\
		\o_1 & 1 - a_2 \o_1 \\
		\end{array} } \right)
	\] 
	and hence 
	\[
	\T_2 (G_\la  (A,B)) = |A|^2 (\E^{+} (B) - |B|^2) + |B|^2 \E^{+} (A) \le |A|^2 \E^{+} (B) + |B|^2 \E^{+} (A) \,.
	\]
	This completes the proof.
	$\hfill\Box$
\end{proof}

\begin{remark}
	Similarly, one can calculate higher energies of the set $\E^R_k (G_\la (A,B))$, $\E^L_k (G_\la (A,B))$ and prove  
	%Finally, by the same arguments, we have 
	\[
	\E^R_k (G_\la (A,B)) = |A|^2 (\E^{+}_k (B) - |B|^k) + |B|^k \E^{+}_k (A) \le |A|^2 \E^{+}_k (B) + |B|^k \E^{+}_k (A)
	\]  
	and 
	\[
	\E^L_k (G_\la (A,B)) \le |B|^2 \E^{+}_k (A) + |A|^k \E^{+}_k (B) \,.
	\]  	
\end{remark}

\bigskip

Using these upper bounds for the energy of the set $G(A,B)$, we obtain our first incidence result. 
Theorem \ref{t:hyp_incidences} implies Theorem \ref{t:hyp_incidences_intr} from the Introduction if one 
%use 
applies 
a trivial bound $\E^{+} (B,C) \le (|B| |C|)^{3/2}$.
Further, 
the first bound of Theorem \ref{t:hyp_incidences} is nontrivial only if $\E^{+} (C) \le |C|^{3-\eps}$ and $\E^{+} (B) \le |B|^{3-\eps}$, where $\eps>0$ 
but the second one is always nontrivial. 
Nevertheless it is interesting that incidences for hyperbolas are connected with the ordinary additive energy of a set.   
Also, the first bound takes place in any field not only in $\F_p$.

\begin{theorem}
	Let $A,B,C,D\subseteq \F_p$ be sets.
	Then for any $\la \neq 0$, one has 
	\begin{equation}\label{f:hyp_incidences_def}
	|\{ (a+b) (c+d) = \lambda ~:~ a\in A,\, b\in B,\, c\in C,\, d\in D \}| - \frac{|A||B||C||D|}{p} 
	\lesssim 
	\end{equation}
	\[
	\lesssim \min \{ |D|^{1/2} |B| |C| + |A| |D|^{1/2} (|B| |C|)^{1/3} (|B|^{1/3} \E^{+} (C)^{1/6} + |C|^{1/3} \E^{+} (B)^{1/6}), 
	%|A| |C|^{1/3} |B|^{2/3} |D|^{1/2} \E^{+} (C)^{1/6}, 
	\]
	\begin{equation}\label{f:hyp_incidences}
	|A|^{1/4} |B||C| |D|^{1/2} + |A|^{3/4} (|B||C|)^{19/24} |D|^{1/2} (\E^{+} (B,C))^{1/24} \} \,.
	\end{equation}
	%	Besides the quantity from \eqref{f:hyp_incidences_def} can be estimated as  
	%\[
	%	|A|^{1/4} |B||C| |D|^{1/2} + |A|^{3/4} |D|^{1/2} (|B||C|)^{5/8} (\E^{+} (B,C))^{1/24} \,.
	%	%\left( |C| \E^{+} (B)^{1/2} + |B| \E^{+} (C)^{1/2} \right)^{1/18} \,.
	%\]
	\label{t:hyp_incidences}
\end{theorem}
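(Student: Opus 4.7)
The plan is to recast the left--hand side of \eqref{f:hyp_incidences_def} as an incidence count for a group action of $\SL_2(\F_p)$ on $\F_p$, and then combine the counting lemma (Lemma \ref{l:actions}) with the energy estimates for $G_{-\la}(C,B)$ supplied by Lemma \ref{l:T_3(G)} and Lemma \ref{l:D_2}.  After the linear change of variables $x=-a$, $y=-d$, the equation $(a+b)(c+d)=\la$ rewrites as $y=gx$ under the M\"obius action, with $g=u_{c}v_{b}\in G_{-\la}(C,B)$. Setting $G:=G_{-\la}(C,B)$, the map $(c,b)\mapsto u_{c}v_{b}$ is injective, so $|G|=|B||C|$, and
\[
I=\sum_{g\in G}\sum_{x}(-A)(x)(-D)(gx).
\]
Writing $(-D)=f_{-D}+|D|/p$ with $f_{-D}$ the balanced function of $-D$ extracts the main term $|A||B||C||D|/p$, reducing matters to estimating
\[
\sigma:=\sum_{g\in G}\sum_{x}(-A)(x)\,f_{-D}(gx).
\]

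For the second bound of the theorem, I apply Lemma \ref{l:actions} with $f_{1}=-A$, $f_{2}=f_{-D}$ and $k=2$.  Since $\|(-A)\|_{2}^{2}=|A|$ and $\|f_{-D}\|_{2}^{2}\le|D|$, this yields
\[
|\sigma|^{4}\le|A|^{2}|D|\cdot R,\qquad R:=\sum_{g}r_{(GG^{-1})^{2}}(g)\sum_{x}f_{-D}(x)\overline{f_{-D}(gx)}.
\]
The quantity $R$ is then handled by separating the diagonal contribution from $g=e$ (which equals $\T_{2}(G)\cdot\|f_{-D}\|_{2}^{2}$) from the off--diagonal part, and estimating the latter via Cauchy--Schwarz together with the identity $\sum_{g}r^{2}_{(GG^{-1})^{2}}(g)=\T_{4}(G)$ and the order reduction $\T_{4}(G)\le|G|\T_{3}(G)$ from Section \ref{sec:non-commutative}.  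This introduces a factor of $\T_{3}(G)^{1/2}$ alongside $|D|$--dependent moments of $\tau(g):=\sum_{x}f_{-D}(x)f_{-D}(gx)$ (bounded trivially by $|D|$).  The key input Lemma \ref{l:T_3(G)} gives
\[
\T_{3}(G_{-\la}(C,B))\le|B||C|\sum_{x}r^{2}_{(B-B)(C-C)}(x)+|B|^{4}|C|^{4},
\]
and Lemma \ref{l:D_2} applied with $A\to B$, $B\to C$ gives
\[
\sum_{x}r^{2}_{(B-B)(C-C)}(x)\lesssim\frac{|B|^{4}|C|^{4}}{p}+(|B||C|)^{5/2}\E^{+}(B,C)^{1/2}.
\]
Inserting these into the fourth--power inequality and taking fourth roots produces the two terms $|A|^{1/4}|B||C||D|^{1/2}$ and $|A|^{3/4}(|B||C|)^{19/24}|D|^{1/2}\E^{+}(B,C)^{1/24}$ of the claimed bound.

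For the first bound, involving $\E^{+}(B)$ and $\E^{+}(C)$ separately, I proceed analogously but at the lower level $k=1$: Cauchy--Schwarz gives $|\sigma|^{2}\le|A|R_{1}$ with $R_{1}=\sum_{g}r_{GG^{-1}}(g)\tau(g)$.  The identity contribution $r_{GG^{-1}}(e)\cdot\tau(e)=|G|\|f_{-D}\|_{2}^{2}\lesssim|B||C||D|$ yields the term $|D|^{1/2}|B||C|$ after taking square roots.  The non--identity part is bounded via H\"older's inequality with exponents tuned so as to extract a factor $\T_{2}(G)^{1/3}$; substituting the formula \eqref{f:T_2(G)} of Lemma \ref{l:T_3(G)}, namely
\[
\T_{2}(G_{-\la}(C,B))\le|C|^{2}\E^{+}(B)+|B|^{2}\E^{+}(C),
\]
and using subadditivity of $t\mapsto t^{1/6}$, produces the factor $|B|^{1/3}\E^{+}(C)^{1/6}+|C|^{1/3}\E^{+}(B)^{1/6}$.

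The main obstacle is the precise book--keeping of exponents in the Cauchy--Schwarz/H\"older chain applied to $R$ (respectively $R_{1}$): the unusual fractions $19/24$ and $1/24$ in the second bound, and $1/3$ and $1/6$ in the first, must emerge exactly from balancing the energy powers $\T_{k}(G)$ against the trivial $L^{\infty}$ estimate $|\tau(g)|\le|D|$ and the identity--element contributions.  A minor technical point is handling the degenerate loci of the M\"obius action (the pole $x=b$ and the point at infinity), which contribute at most $O(|A|+|D|)$ to $I$ and are absorbed in the error terms.
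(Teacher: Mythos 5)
Your overall frame -- recasting the count as $\sum_{g\in G}\sum_x f_1(x)f_2(gx)$ for a set $G$ of $\SL_2$-elements, extracting the main term with a balanced function, invoking Lemma \ref{l:actions}, and feeding in $\T_2(G)$ and $\T_3(G)$ via Lemma \ref{l:T_3(G)} and Lemma \ref{l:D_2} -- is exactly the paper's strategy. But the step you defer as ``book-keeping of exponents'' is where the proof actually lives, and the mechanism you propose for it does not work. The paper's exponents $1/4,\,3/4,\,19/24,\,1/24$ (and $1/3,\,1/6$ in the first bound) come from Lemma \ref{l:k-transitive} with $k=3$, i.e.\ the $3$-transitivity of $\SL_2(\F_p)$ on the projective line, applied to the correlation $w(g)=\sum_{x\in A}A(gx)$: it gives the level-set bound $|S_\tau|\ll |A|^6/\tau^3$ (equivalently a third-moment bound $\sum_g w^3(g)\lesssim |A|^6$), which is then combined with a pigeonhole in $\tau$ and a bootstrap using the lower bound $\tau\gg \sigma_*^4|D|^{-2}|A|^{-1}|G|^{-4}$ to reach $\sigma_*^{12}\lesssim |D|^6|A|^9|G|^6\T_3(G)$. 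Your substitute -- splitting off $g=e$, Cauchy--Schwarz against $\T_4(G)\le|G|\T_3(G)$, and the trivial bound $|\tau(g)|\le|D|$ -- cannot produce these exponents: the trivial $L^\infty$ bound on the correlation only yields $\sigma\lesssim (|A||D|)^{1/2}|B||C|$, and the $L^2$ route leaves you with $\sum_g\tau(g)^2$ over all of $\SL_2(\F_p)$, which you have no tool to control. The same omission affects your first bound: the factor $|A|\,\T_2(G)^{1/6}$ arises from H\"older with exponent $3$ against the third moment of $w$, again supplied by $3$-transitivity, not from ``tuned exponents'' alone.

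A second, smaller but real, problem is that you have reversed the roles of the two functions in Lemma \ref{l:actions}. The paper takes $f_1$ to be the balanced function of $-D$ and $f_2=A$, so that the prefactor is $\|f_1\|_2^{2^k}\|f_2\|_2^{2^k-2}=|D|^{2}|A|$ (for $k=2$) and the correlation appearing inside the sum is the nonnegative quantity $w(g)=|A\cap gA|$, to which the level-set argument applies. With your choice $f_1=-A$, $f_2=f_{-D}$ the prefactor is $|A|^2|D|$ and the inner correlation involves the signed function $f_{-D}$; carried through, this would at best prove the theorem with $|A|$ and $|D|$ interchanged (e.g.\ $|D|^{1/4}|A|^{1/2}\cdots$ in place of $|A|^{1/4}|D|^{1/2}\cdots$), which is not the stated estimate. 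Incidentally, the degenerate loci you worry about do not arise: since $\la\neq 0$, every solution of $(y-a)(b-x)=\la$ automatically has $x\neq b$, so the correspondence with the M\"obius action is exact on affine points.
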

\begin{proof}
	Rewrite our basic equation $(a+b) (c+d) = \lambda$ as $(y-a) (b-x) = \la$, where we have new variables $y\in A$, $x\in -D$, $a\in -B$, $b\in C$. 
	In other words, we 
	%are interested in 
	need to count the number of the  solutions $\sigma$ to the equation 
	$g x =y$ with $g\in G_\la (-B,C) := G$ and  $y\in A$, $x\in -D$. 
	Let $f_1 (x) = D(-x) - |D|/p$. Then 
	\[
	\sigma = \frac{|A||B||C||D|}{p} + \sum_{g\in G} \sum_x f_1 (x) A(gx) = \frac{|A||B||C||D|}{p} + \sigma_* \,.
	\]
	Here one can consider other balanced functions, e.g., of the set $A$ or even of the sets $(-B)$, $C$ in our  set of actions  $G$
	%(that is 
	(in other words $g$ is taken with the correspondent weight in this case).  
	Using  Lemma \ref{l:actions} with $k=1$, we get
	\[
	\sigma^2_* \le |D| \sum_g r_{GG^{-1}} (g) \sum_{x \in A} A(gx) \,. 
	\]
	Applying Lemma \ref{l:k-transitive} with $k=3$, as well as the second part of Lemma \ref{l:T_3(G)}, we obtain
	\[
	\sigma_* \ll |D|^{1/2} |B| |C| + |A| |D|^{1/2} (|B| |C|)^{1/3} \T^{1/6}_2 (G) 
	%\le 2 |A| |D|^{1/2} (|B| |C|)^{2/3} \T^{1/6}_2 (G) 
	\le
	\]
	\[
	\le
	|D|^{1/2} |B| |C| + |A| |D|^{1/2} (|B| |C|)^{1/3} (|B|^{1/3} \E^{+} (C)^{1/6} + |C|^{1/3} \E^{+} (B)^{1/6}) \,.
	\]
	%\[
	%	\le
	%	2 |A| |B| |D|^{1/2} \E^{+} (C)^{1/6} \,. 
	%\] 

	Similarly, using Lemma \ref{l:actions} 
	with $k=2$, 
	we 
	%obtain
	have 
	\begin{equation}\label{tmp:12.04_1}
	\sigma^4_* \le |D|^2 |A| \sum_g r_{GG^{-1} G G^{-1}} (g) \sum_{x \in A} A(gx) \,.
	\end{equation}
	Denote by $w(g) =  \sum_{x \in A} A(gx)$.
	% and return to \eqref{tmp:12.04_1}. 
	It gives us
	\[
	\sigma^4_* \le |D|^2 |A| \sum_g r_{GG^{-1} G} (g) r_{wG} (g) \,,
	\]
	and by the pigeonhole principle there is $\tau$ such that 
	\begin{equation}\label{tmp:12.04_2}
	\sigma^4_* \lesssim |D|^2 |A| \tau \sum_g r_{GG^{-1} G} (g) r_{S_\tau G} (g) \,,
	\end{equation}
	where $S_\tau = \{ g\in \SL_2 (\F_p) ~:~ \tau \le w(g) \le 2\tau \}$. 
	From the last inequality one can 
	%see that 
	derive 
	$\tau \gg |D|^{-2} |A|^{-1} |G|^{-4} \sigma^4_*$.
	It follows that if $\tau \ll 1$, then $\sigma_* \ll |G| |D|^{1/2} |A|^{1/4}$.  
	Otherwise in view of Lemma \ref{l:k-transitive}, we 
	%obtain 
	have 
	$|S_\tau| \ll |A|^6/\tau^3$. 
	Now combining \eqref{tmp:12.04_2}, 
	%Lemma \ref{l:T_2^k} with $k=1$, 
	the second part of Lemma \ref{l:T_2^k} and the Cauchy--Schwarz inequality, we obtain 
	\[
	\sigma^8_* \lesssim  
	|D|^4 |A|^2 \tau^2 \T_3 (G) \E (G, S^{-1}_\tau, G, S^{-1}_\tau)
	\le |D|^4 |A|^2 \tau^2 \T_3 (G) |G|^2 |S_\tau| \,.
	% \le |D|^4 |A|^2 \tau^2 \T_3 (G) \E(G)^{1/2} |S_\tau|^{3/2} \,.	
	\]
	Using $|S_\tau| \ll |A|^6/\tau^3$ and our lower bound 	$\tau \gg |D|^{-2} |A|^{-1} |G|^{-4} \sigma^4_*$, we get
	\begin{equation}\label{tmp:12.04_3}
	\sigma^{12}_* \cdot (|D|^{-2} |A|^{-1} |G|^{-4})^{} \ll \sigma^8_* \tau^{} \lesssim |D|^4 |A|^{8} |G|^2 \T_3 (G) \,.	
	\end{equation}
	Applying Lemma \ref{l:T_3(G)} and Lemma \ref{l:D_2} to estimate $\T_3 (G)$, we derive
	\begin{equation}\label{tmp:06.04_0}
	\T_3 (G) \le |B| |C| \sum_x r^{2}_{(B-B)(C-C)} (x) + |B|^4 |C|^4 
	\lesssim 
	(|B||C|)^{7/2} (\E^{+} (B,C))^{1/2} \,.
	\end{equation}
	Here we do not need to have deal the term $(|B||C|)^4/p$ in Lemma \ref{l:D_2} because one can consider the balanced function of  
	$(-B)$, $C$ in the set of actions  $G$ (see details in \cite{sh_as}).  
	%It follows that as well as the previous calculations, we have 
	Combining the last inequality with \eqref{tmp:12.04_3}, we obtain  
	\[
	\sigma^{12}_* \lesssim  |D|^6 (|B||C|)^{6} |A|^{9} (|B||C|)^{7/2} \E^{+} (B,C)^{1/2}  \,.
	\]
	This completes the proof.
	%In view of  the first part of Lemma \ref{l:T_3(G)} and Lemma \ref{l:D_2}, we have 
	%\[
	%\sigma_* \ll |A|^{1/4} |B||C| |D|^{1/2} + |A|^{3/4} (|B||C|)^{19/24} |D|^{1/2} (\E^{+} (B,C))^{1/24}
	%\]
	%as required.
	$\hfill\Box$
\end{proof}

\begin{remark}
	One can apply general results from \cite{NG_S}, \cite{sh_as} to nontrivially estimate  $\T_4 (G)$ via $\T_2 (G)$ in Theorem \ref{t:hyp_incidences} (see formula \eqref{tmp:12.04_1})	but we prefer to use $\T_3 (G)$ because it gives better bounds.  
\end{remark}

Using a trivial bound $\E^{+} (B,C) \le (|B||C|)^{3/2}$, we obtain

\begin{corollary}
	Let $A,B,C,D\subseteq \F_p$ be sets.
	Then for any $\la \neq 0$, one has 
	\[
	|\{ (a+b) (c+d) = \lambda ~:~ a\in A,\, b\in B,\, c\in C,\, d\in D \}| - \frac{|A||B||C||D|}{p} 
	\lesssim 
	\]
	\begin{equation}\label{fc:hyp_incidences}
	\lesssim 
	|A|^{1/4} |B||C| |D|^{1/2} + |A|^{3/4} (|B||C|)^{41/48} |D|^{1/2}  \,.
	\end{equation}
	\label{c:hyp_incidences}	
\end{corollary}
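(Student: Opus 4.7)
The plan is to read the Corollary off from the second bound in Theorem~\ref{t:hyp_incidences} by substituting the trivial upper estimate for the additive energy $\E^{+}(B,C)$. Specifically, the Theorem gives, among other things,
\[
|\{(a+b)(c+d)=\lambda\}| - \frac{|A||B||C||D|}{p} \lesssim |A|^{1/4}|B||C||D|^{1/2} + |A|^{3/4}(|B||C|)^{19/24} |D|^{1/2} (\E^{+}(B,C))^{1/24}\,,
\]
so it suffices to control $\E^{+}(B,C)$ by an absolute function of $|B|,|C|$.

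The main computation is the trivial bound $\E^{+}(B,C) \le (|B||C|)^{3/2}$. I would verify this by two elementary steps: fixing $b_1,b_2\in B$ in the representation $b_1+c_1=b_2+c_2$ and noting that $c_1$ determines $c_2$ yields $\E^{+}(B,C) \le |B|^2|C|$; by the symmetric argument $\E^{+}(B,C) \le |B||C|^2$; and taking the geometric mean of the two estimates gives $\E^{+}(B,C) \le (|B||C|)^{3/2}$.

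Inserting this into the bound of Theorem~\ref{t:hyp_incidences}, the exponent of $|B||C|$ in the second summand becomes
\[
\frac{19}{24} + \frac{1}{24}\cdot\frac{3}{2} \;=\; \frac{38}{48} + \frac{3}{48} \;=\; \frac{41}{48}\,,
\]
while the factors $|A|^{3/4}$ and $|D|^{1/2}$ are untouched, yielding exactly \eqref{fc:hyp_incidences}. There is no genuine obstacle here; the only place to be careful is the arithmetic check that $19/24 + 3/48 = 41/48$, and the observation that the trivial energy bound is sharp enough to make the resulting estimate meaningful (indeed nontrivial in its own right, as remarked before the Theorem).
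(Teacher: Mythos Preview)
Your proposal is correct and matches the paper's own derivation exactly: the paper simply states ``Using a trivial bound $\E^{+}(B,C) \le (|B||C|)^{3/2}$, we obtain'' and records the Corollary, so your write-up is just a fleshed-out version of the same one-line deduction from Theorem~\ref{t:hyp_incidences}.
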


\begin{remark}
	In \cite[Theorem 41]{sh_as} it was proved, in particular, that for any $c<1/192$  one has 
	%the quantity 
	$\sum_x r^{2}_{(A-A)(A-A)} (x) \ll |A|^{13/2-c}$, provided 
	$|A|\le p^{48/97}$. 
	It gives an improvement of Theorem \ref{t:hyp_incidences} at least in the symmetric case (in particular, it gives a better bound in formula \eqref{f:r_AA} of Corollary \ref{c:r_AA} below for small $A$).  
\end{remark}

Now let us obtain an upper bound for size of hyperbola with elements from a set with small sumset. 
First results of this type were obtained in \cite{NG_S} but our new bound is more "quantitative".

\begin{corollary}
	Let $A\subseteq \F_p$ be a set.
	Suppose that $|A+A| \le K |A|$. 
	Then for any $\la \neq 0$, one has 
	\begin{equation}\label{f:r_AA}
	r_{AA} (\la) \lesssim \frac{K^2 |A|^2}{p} + K^{5/4} |A|^{23/24} \,.
	\end{equation}
	Finally, if 
	%$|A| \le p^{13/23} K^{50/23}$, 
	%	$K^{17} |A|^{92} \le p^{52}$, 
	$|A-A|^{92} \le p^{52}$, 
	then
	\begin{equation}\label{f:r_AA'}
	r_{AA} (\la) \lesssim \frac{K^2 |A|^2}{p} + O_K (|A|^{149/156}) \,.
	\end{equation}
	\label{c:r_AA}
\end{corollary}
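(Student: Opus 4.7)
The plan is to reduce $r_{AA}(\la)$ to an incidence count for hyperbolas and then apply Corollary~\ref{c:hyp_incidences} for \eqref{f:r_AA}, and an internal refinement of Theorem~\ref{t:hyp_incidences} via Lemma~\ref{l:E_times_A} for \eqref{f:r_AA'}. The reduction itself is straightforward: for each tuple $(a_1, a_2, b, d) \in A^4$ with $a_1 a_2 = \la$, put $s = a_1+b$ and $t = a_2+d$, so that $s, t \in A+A$ and $(s-b)(t-d) = \la$. The map $(a_1, a_2, b, d) \mapsto (s, b, t, d)$ is injective and its image lies in the ``target set'' $\{(s, b, t, d) : s, t \in A+A,\ b, d \in A,\ (s-b)(t-d) = \la\}$, so
\[
|A|^2 \, r_{AA}(\la) \;\le\; \big|\{(s, b, t, d) : s, t \in A+A,\ b, d \in A,\ (s-b)(t-d) = \la\}\big|.
\]

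For \eqref{f:r_AA}, I rewrite $(s-b)(t-d) = (s+(-b))(t+(-d))$ and apply Corollary~\ref{c:hyp_incidences} with $\mathcal{A} = \mathcal{D} = A+A$ and $\mathcal{B} = \mathcal{C} = -A$. This assignment puts the ``fat'' sets of size $K|A|$ into the two slots whose total $K$-exponent is smallest in the dominant term. With $|\mathcal{A}| = |\mathcal{D}| = K|A|$ and $|\mathcal{B}| = |\mathcal{C}| = |A|$ the bound becomes
\[
|A|^2\, r_{AA}(\la) - \frac{K^2 |A|^4}{p} \;\lesssim\; K^{3/4}|A|^{11/4} + K^{5/4}|A|^{71/24},
\]
and dividing by $|A|^2$ yields \eqref{f:r_AA} (the second term dominates).

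For \eqref{f:r_AA'} I use the same reduction but go inside the proof of Theorem~\ref{t:hyp_incidences} and replace the Lemma~\ref{l:D_2} estimate $\sum_x r^2_{(A-A)(A-A)}(x) \lesssim |A|^{13/2}$ by a better one. From the trivial $\|r_{A-A}\|_\infty \le |A|$ I obtain $\sum_x r^2_{(A-A)(A-A)}(x) \le |A|^4 \cdot \E^{\times}(A-A)$. The hypothesis $|A-A|^{92} \le p^{52}$ gives $|A-A| \le p^{13/23}$, and the Pl\"unnecke--Ruzsa inequality gives $|(A-A)+(A-A)| \le K^4|A|$, so the doubling of $A-A$ is $O(K^4)$. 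Lemma~\ref{l:E_times_A} then applies to $A-A$ and yields $\E^{\times}(A-A) \lesssim O_K(|A|^{32/13})$, so
\[
\sum_x r^2_{(A-A)(A-A)}(x) \;\lesssim\; O_K\big(|A|^{4 + 32/13}\big) \;=\; O_K\big(|A|^{13/2 - 1/26}\big),
\]
an $|A|^{1/26}$-saving. Feeding this into Lemma~\ref{l:T_3(G)} yields $\T_3(G_\la(-A,-A)) \lesssim O_K(|A|^{17/2 - 1/26})$, and propagating through $\sigma_*^{12} \lesssim K^{15}|A|^{27}\,\T_3(G)$ (from the proof of Theorem~\ref{t:hyp_incidences}) and dividing by $|A|^2$ gives
\[
r_{AA}(\la) \;\lesssim\; \frac{K^2|A|^2}{p} + O_K\big(|A|^{23/24 - 1/312}\big) \;=\; \frac{K^2|A|^2}{p} + O_K\big(|A|^{149/156}\big),
\]
which is \eqref{f:r_AA'}.

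The main technical point is verifying that Lemma~\ref{l:E_times_A} applies to $A-A$ in the given regime and tracking how the $|A|^{1/26}$-saving in the estimate for $\sum r^2_{(A-A)(A-A)}$ becomes a $|A|^{1/312}$-saving in the final bound through the $12$-th power appearing in the proof of Theorem~\ref{t:hyp_incidences}; the final exponent is then $149/156 = 23/24 - 1/312$.
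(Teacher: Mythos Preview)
Your proof is correct and follows essentially the same approach as the paper: the reduction $|A|^2 r_{AA}(\la) \le |\{(a+b)(c+d)=\la : a,d\in A+A,\ b,c\in -A\}|$ via $A+x\subseteq A+A$, then Corollary~\ref{c:hyp_incidences} for \eqref{f:r_AA}, and for \eqref{f:r_AA'} replacing the Lemma~\ref{l:D_2} input in the proof of Theorem~\ref{t:hyp_incidences} by $\sum_x r^2_{(A-A)(A-A)}(x)\le |A|^4\E^\times(A-A)$ together with Lemma~\ref{l:E_times_A} applied to $A-A$. The paper records the size condition for Lemma~\ref{l:E_times_A} as $|A-A|^{117}\le p^{52}|2A-2A|^{25}$ and then observes this follows from $|A-A|^{92}\le p^{52}$; your version (noting $|A-A|\le p^{13/23}$ and that the required bound only loosens with larger doubling) is equivalent.
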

\begin{proof}
	Put $S=A+A$. 
	We have $A+x \subset A+A$ for any $x\in A$. 
	Hence 
	%\begin{equation}\label{tmp:06.04_1}
	%	r_{AA} (\la) |A|^2 \le |\{ (a+b) (c+d) = \lambda ~:~ a\in A+A,\, b\in -A,\, c\in A+A,\, d\in -A \}| \,.
	%\end{equation}
	\begin{equation}\label{tmp:06.04_1}
	r_{AA} (\la) |A|^2 \le |\{ (a+b) (c+d) = \lambda ~:~ a\in A+A,\, b\in -A,\, c\in -A,\, d\in A+A \}| \,.
	\end{equation}
	Applying the second part of Theorem \ref{t:hyp_incidences}, as well as a trivial  estimate 
	%$\E^{+} (-A,A+A) \le K|A|^3$, we obtain
	$\E^{+} (-A,-A) \le |A|^3$, we 
	%obtain
	get 
	\[
	r_{AA} (\la) \lesssim  \frac{K^2 |A|^2}{p} + K^{3/4} |A|^{3/4} + K^{5/4} |A|^{23/24} \ll \frac{K^2 |A|^2}{p} + K^{5/4} |A|^{23/24} \,.
	\]
	%	as required.

	To obtain the second bound of Corollary \ref{c:r_AA} we apply estimate \eqref{tmp:06.04_1} and then we use the first part of Lemma \ref{l:T_3(G)} directly. 
	It gives us (see formulae \eqref{tmp:12.04_3}, \eqref{tmp:06.04_0} from the proof of Theorem \ref{t:hyp_incidences})
	\begin{equation}\label{tmp:06.04_2}
	r_{AA} (\la) \ll \frac{K^2 |A|^2}{p} + K^{3/4} |A|^{3/4} + (K |A|)^{5/4}  |A|^{-5/6} \left( \sum_x r^2_{(A-A)(A-A)} (x) \right)^{1/12} \,.
	\end{equation}
	To estimate $\sum_x r^2_{(A-A)(A-A)} (x)$, 
	%we use trivial bounds and 
	%have
	%	obtain 
	we use the Pl\"unnecke inequality \cite{TV}, combining with Lemma \ref{l:E_times_A}, and obtain
	\[
	\sum_x r^2_{(A-A)(A-A)} (x) \le 
	%|A|^2 |S|^2 (\E^{+} (A-A) \E^{+} (2A-2A) )^{1/2} 
	|A|^4 \E^{\times} (A-A) 
	\ll_K
	|A|^{4 +32/13} \,,
	\]  
	provided 
	$|A-A|^{117} \le p^{52} |2A-2A|^{25}$. 
	The last inequality satisfies thanks to our condition 	$|A-A|^{92} \le p^{52}$.
	%	 $|A+A|^{17} |A|^{75} \le p^{52}$ as well as   the Pl\"unnecke inequality. 
	This completes the proof.
	$\hfill\Box$
\end{proof}

\bigskip

To compare, using the Szemer\'edi--Trotter Theorem \cite{ST}, one can obtain $r_{AA} (\la) \ll_K |A|^{2/3}$ for any finite $A\subset \R$ with $|A+A| \le K|A|$ and $\la \neq 0$.

\bigskip

%Another 
In our final 
consequence of Theorem \ref{t:hyp_incidences} we have deal with the case of arithmetic progressions.

\begin{corollary}
	Let $A,B,C,D\subseteq \F_p$ be sets and $B$, $C$ be arithmetic progressions with the differences equal one.
	Then for any $\la \neq 0$, one has 
	\[
	|\{ (a+b) (c+d) = \lambda ~:~ a\in A,\, b\in B,\, c\in C,\, d\in D \}| - \frac{|A||B||C||D|}{p} 
	\lesssim
	\]
	\begin{equation}\label{fc:hyp_incidences_progr}
	\lesssim 
	|A|^{1/4} |B||C| |D|^{1/2} + |A|^{3/4} |D|^{1/2} (|B||C|)^{5/6} \left( 1+ \left(\frac{|B||C|}{p} \right)^{1/12} \right)  \,.
	\end{equation}
	\label{c:hyp_incidences_progr}	
\end{corollary}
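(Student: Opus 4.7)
The plan is to follow the proof of Theorem \ref{t:hyp_incidences} verbatim, substituting Theorem \ref{t:E^*_progr} for Lemma \ref{l:D_2} at the step where the multiplicative energy of $B-B$ and $C-C$ is bounded. Since $B$ and $C$ are arithmetic progressions with common difference $1$, so are $B-B$ and $C-C$ (of lengths $2|B|-1$ and $2|C|-1$), and Theorem \ref{t:E^*_progr} provides a much sharper bound than the one coming from $\E^+(B,C)$ via Lemma \ref{l:D_2}: for arithmetic progressions the additive energy $\E^{+}(B,C)$ is essentially maximal, so the generic estimate of Theorem \ref{t:hyp_incidences} cannot exploit the arithmetic--progression structure, whereas Theorem \ref{t:E^*_progr} yields a genuine $1/p$--saving on the multiplicative side.

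The summand $|A|^{1/4}|B||C||D|^{1/2}$ arises exactly as in the proof of Theorem \ref{t:hyp_incidences}, from the small--$\tau$ regime of the dyadic decomposition, and no modification is required. For the second summand, I would begin from the inequality \eqref{tmp:12.04_3} obtained in the proof of Theorem \ref{t:hyp_incidences},
\[
\sigma_*^{12}\lesssim |D|^6|A|^9|G|^6\,\T_3(G),\qquad G=G_\lambda(-B,C),\quad |G|\le |B||C|,
\]
and then apply Lemma \ref{l:T_3(G)} to write
\[
\T_3(G)\le |B||C|\sum_x r^{2}_{(B-B)(C-C)}(x)+|B|^4|C|^4.
\]
By Theorem \ref{t:E^*_progr}, since $B-B$ and $C-C$ are arithmetic progressions with common difference one,
\[
\sum_x r^{2}_{(B-B)(C-C)}(x)=\E^{\times}(B-B,C-C)\ll \frac{|B|^2|C|^2}{p}+|B||C|\log^{2}p.
\]
Combining these two displays and absorbing the middle and smallest terms via the trivial bounds $|B|^2|C|^2\le |B|^4|C|^4$ and $|B|^3|C|^3/p\le |B|^5|C|^5/p$, one arrives at
\[
\T_3(G)\lesssim |B|^4|C|^4\Bigl(1+\frac{|B||C|}{p}\Bigr).
\]

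Substituting back into the bound for $\sigma_*^{12}$ gives
\[
\sigma_*^{12}\lesssim |D|^6|A|^9(|B||C|)^{10}\Bigl(1+\frac{|B||C|}{p}\Bigr),
\]
whence, taking twelfth roots,
\[
\sigma_*\lesssim |A|^{3/4}|D|^{1/2}(|B||C|)^{5/6}\Bigl(1+\frac{|B||C|}{p}\Bigr)^{1/12}.
\]
The elementary concavity inequality $(1+x)^{1/12}\le 1+x^{1/12}$ for $x\ge 0$ (a consequence of the concavity of $t\mapsto t^{1/12}$, since the function $1+x^{1/12}-(1+x)^{1/12}$ vanishes at $x=0$ and has nonnegative derivative) then yields the stated form of the corollary. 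I do not anticipate any serious obstacle: the only new input beyond Theorem \ref{t:hyp_incidences} is the substitution of Theorem \ref{t:E^*_progr} at the correct point, and the rest is straightforward bookkeeping.
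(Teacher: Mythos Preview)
Your approach is essentially the paper's own proof: rerun the argument of Theorem \ref{t:hyp_incidences} and, at the point where $\T_3(G)$ is bounded via Lemma \ref{l:T_3(G)}, replace Lemma \ref{l:D_2} by Theorem \ref{t:E^*_progr} applied to the difference progressions. The structure, the intermediate inequality \eqref{tmp:12.04_3}, and the resulting bound $\T_3(G)\lesssim (|B||C|)^4\bigl(1+\tfrac{|B||C|}{p}\bigr)$ all match the paper.

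There is, however, one genuine slip. You write
\[
\sum_x r^{2}_{(B-B)(C-C)}(x)=\E^{\times}(B-B,C-C),
\]
but in the paper's convention (made explicit in the proof of Lemma \ref{l:T_3(G)} and in the main term of Lemma \ref{l:D_2}) the symbol $r_{(B-B)(C-C)}(x)$ counts quadruples $(b_1,b_2,c_1,c_2)$ with $(b_1-b_2)(c_1-c_2)=x$, \emph{with multiplicity}. Hence the displayed quantity is not $\E^{\times}(B-B,C-C)$ but rather the count of $8$--tuples, and the correct relation (which the paper uses) is the inequality
\[
\sum_x r^{2}_{(B-B)(C-C)}(x)\le (|B||C|)^2\,\E^{\times}(B-B,C-C)\ll (|B||C|)^2\left(\frac{|B|^2|C|^2}{p}+|B||C|\log^2 p\right).
\]
With this correction you obtain directly
\[
\T_3(G)\ll \frac{(|B||C|)^5}{p}+(|B||C|)^4\log^2 p\lesssim (|B||C|)^4\Bigl(1+\frac{|B||C|}{p}\Bigr),
\]
which is exactly the bound you wanted (and exactly what the paper derives). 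Your final inequality is therefore correct, but only because you then artificially inflated your too-small intermediate terms; the equality you asserted is false by a factor of $(|B||C|)^2$. Fix this line and the proof is clean and coincides with the paper's.
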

\begin{proof}
	Indeed, by Theorem \ref{t:E^*_progr}, we know that 
	% \cite[Theorem 2]{CSZ}, we have for any arithmetic progressions $B$ and $C$ 
	\[
	\E^\times (B,C) = \frac{|B|^2 |C|^2}{p} + O(|B||C| \log^2 p) \,.
	\]
	We have 
	\[
	\sum_x r^{2}_{(B-B)(C-C)} (x) \le (|B||C|)^2 \E^{\times} (B-B, C-C) \,.
	\]
	After that apply the arguments of the proof of Theorem \ref{t:hyp_incidences} and our upper bound for $\E^\times (B,C)$.
	It gives us, in particular, 
	%\begin{equation}\label{tmp:06.04_0'}
	\[
	\T_3 (G (-B,C)) \le |B| |C| \sum_x r^{2}_{(B-B)(C-C)} (x) + |B|^4 |C|^4 
	\ll
	\]
	\[
	%\ll
	(|B||C|)^3  \E^{\times} (B-B, C-C) + |B|^4 |C|^4 \ll (|B||C|)^3  \E^{\times} (B-B, C-C) 
	\ll
	%\]
	%\[
	%	 \ll
	\frac{(|B||C|)^5}{p} + (|B||C|)^4 \log^2 p 
	%\,.
	\]
	and after that we substitute this bound into estimate $\sigma_*$ as in Theorem \ref{t:hyp_incidences}. 
	This completes the proof.
	$\hfill\Box$
\end{proof}

\bigskip 

In a natural way, in the case of arithmetic progressions one can try to estimate higher energies $\T_k (G_\la (B,C))$. 
It turns out that in this situation "first--stage"\,  methods \cite{BG} work rather good and it will be done in the next Section, 
see 
%Theorems \ref{t:B_progr}, \ref{t:B_progr_new} as well as 
Proposition \ref{p:A_abs_Re} and Theorem \ref{t:A_abs_Re_Z}  below.

\section{Asymmetric results}
\label{sec:hyp_second}

%\bigskip
%$\hfill\Box$

In \cite{NG_S} and \cite{sh_as} the authors obtain a series of upper bounds for equation \eqref{f:hyp} in asymmetric cases (i.e. when the set of actions is relatively small). 
Let us recall two results from these papers.

\begin{theorem}
	Let $\la\in \F_p^*$, $f_1, f_2 : \F_p \to \C$ be functions, and $S,T \subseteq \F_p$ be sets. 
	Also, let $G = G_\la (S,T)$ and $|S| |T| \ge p^\eps$.  
	Then  there is $\d = \d (\eps) >0$ such that
	\begin{equation}\label{f:B_any}
	\sum_{g \in G} \sum_x f_1 (x) f_2 (g x) - p^{-1} \left(\sum_x f_1 (x) \right) \cdot \left(\sum_x f_2 (x) \right)
	\le
	2\|f_1 \|_2 \|f_2 \|_2 |S||T| p^{-\d} \,. 
	\end{equation}
	Further, if $A,B,C,D \subseteq \F_p$ are any sets with $|B||C| \ge (|A||D|)^\eps$ and  $\la \in \F_p^*$, then  
	\[
	|\{ (a+b) (c+d) = \lambda ~:~ a\in A,\, b\in B,\, c\in C,\, d\in D \}| - \frac{|A||B||C||D|}{p} 
	\ll
	\]
	\begin{equation}\label{f:B_any_A}
	\ll
	(|A||D|)^{1/2} (|B||C|)^{1-\d} \,. 
	\end{equation}
	\label{t:B_any}
\end{theorem}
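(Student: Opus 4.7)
My plan is to prove \eqref{f:B_any} first and then deduce \eqref{f:B_any_A} from it by specialisation. For the bilinear bound, the strategy is to apply the counting Lemma \ref{l:actions}, reduce the resulting expression to a higher energy $\T_k (G_\la (S,T))$ of the set of matrices $G$, and control this energy by a sum--product estimate via Lemma \ref{l:T_3(G)} combined with Lemma \ref{l:D_2}. After subtracting the means of $f_1$ and $f_2$ (which accounts exactly for the term $p^{-1} (\sum f_1)(\sum f_2)$ up to a negligible error coming from fixed points of the $G$--action on $\F_p$), I may assume $f_1, f_2$ are mean--zero and it remains to bound $\sigma := \sum_{g \in G} \sum_x f_1 (x) f_2 (gx)$.

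Applying Lemma \ref{l:actions} with parameter $k$ gives
\[
|\sigma|^{2^k} \le \|f_1\|^{2^k}_2 \|f_2\|^{2^k - 2}_2 \sum_g r_{(GG^{-1})^{2^{k-1}}} (g) \sum_x f_2 (x) \overline{f_2 (gx)} \,.
\]
The inner expression is a bilinear form in $f_2$ whose kernel $M_H (x,y) = \sum_g r_H (g) [gx = y]$, with $H = (GG^{-1})^{2^{k-1}}$, has entries concentrated near its mean $|G|^{2^k}/p$. Using the elementary fact that a non--identity element of $\SL_2 (\F_p)$ fixes at most two points of $\F_p$, one computes
\[
\|M_H\|^2_F = \sum_{g,g'} r_H (g) r_H (g') |\{x : g^{-1} g' x = x\}| \le p \cdot \T_{2^k} (G) + 2 |G|^{2^{k+1}} \,,
\]
and subtracting the main term coming from mean zero of $f_2$ cancels the contribution of size $|G|^{2^{k+1}}$. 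Hence Cauchy--Schwarz yields $|\sigma|^{2^k} \lesssim \|f_1\|^{2^k}_2 \|f_2\|^{2^k}_2 \cdot \sqrt{p \T_{2^k} (G)}$.

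It remains to estimate $\T_{2^k} (G)$. The monotonicity $\T_l (G) \le |G|^{2(l-3)} \T_3 (G)$ from Section \ref{sec:non-commutative} reduces everything to $\T_3 (G)$, for which Lemma \ref{l:T_3(G)} followed by Lemma \ref{l:D_2} with the trivial input $\E^{+} (S,T) \le (|S||T|)^{3/2}$ yields
\[
\T_3 (G) \lesssim (|S||T|)^{17/4} + (|S||T|)^5 / p \,,
\]
a polynomial saving over the trivial bound $(|S||T|)^5$. Combining everything and taking $2^k$--th roots, under the hypothesis $|S||T| \ge p^\eps$ one obtains \eqref{f:B_any} with $\d = \d (\eps, k) > 0$ for any sufficiently large fixed $k = k(\eps)$. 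Finally, \eqref{f:B_any_A} follows from \eqref{f:B_any} by specialising $f_1 = \mathbf{1}_A$, $f_2 = \mathbf{1}_{-D}$, $S = -B$, $T = C$, and using the dictionary $(a+b)(c+d) = \la \Longleftrightarrow y = g x$ with $g \in G_\la (S,T)$ from the start of Section \ref{sec:hyp_first}; this gives $\|f_1\|_2 \|f_2\|_2 = (|A||D|)^{1/2}$ and the term $|A||B||C||D|/p$ is exactly the subtracted main.

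The main obstacle will be the diagonal/mean--removal step: the naive Frobenius bound on the kernel $M_H$ contains a main term of size $|G|^{2^{k+1}}$, which is the trivial bound we want to beat, so one has to carefully separate the contribution of the identity (which corresponds to $\T_{2^k} (G)$) from that of non--identity elements and verify that for mean--zero $f_2$ the main term is cancelled so that the sum--product saving on $\T_3 (G)$ genuinely propagates. A secondary issue is that the saving on $\T_3 (G)$ is only of relative size $(|S||T|)^{-3/4}$, so after taking $2^k$--th roots the exponent $\d$ degrades to roughly $\eps / 2^k$; this forces a careful choice of $k = k(\eps)$ to balance the two regimes where $(|S||T|)^{17/4}$ and $(|S||T|)^5/p$ respectively dominate the bound on $\T_3 (G)$.
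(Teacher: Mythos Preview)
Your plan works only in a restricted range of $\eps$ and does not prove the theorem as stated. The crux is quantitative: the Frobenius/Cauchy--Schwarz step costs you a factor $\sqrt p$, while a single invocation of the sum--product bound on $\T_3(G)$ gains at most a factor $|G|^{3/4}$ relative to the trivial bound $\T_3(G)\le|G|^5$. Passing to $\T_{2^k}(G)$ via the monotonicity $\T_{2^k}(G)\le|G|^{2(2^k-3)}\T_3(G)$ preserves exactly this same relative gain, so after taking $2^k$--th roots your bound reads
\[
|\sigma|\ \lesssim\ \|f_1\|_2\|f_2\|_2\,\bigl(p^{1/2^{k+1}}|G|^{1-7/(8\cdot2^k)}+|G|^{1-1/2^{k+1}}\bigr)\,.
\]
The second term is harmless, but the first beats the trivial bound $\|f_1\|_2\|f_2\|_2|G|$ only when $|G|^{7/8}>p^{1/2}$, i.e.\ when $|S||T|>p^{4/7}$. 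For $\eps\le4/7$ no choice of $k$ rescues the argument: increasing $k$ shrinks both the loss $p^{1/2^{k+1}}$ and the gain $|G|^{7/(8\cdot2^k)}$ at the \emph{same rate}, and their ratio stays $p^{(4-7\eps)/(8\cdot2^k)}\ge1$. The ``balancing'' you mention in the last paragraph cannot be carried out.

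What the paper's proof (imported from \cite[Lemma~53]{sh_as}) actually relies on is the Bourgain--Gamburd expansion machinery in $\SL_2(\F_p)$ \cite{BG}, which in turn rests on Helfgott's growth theorem \cite{HH}. The point there is that the saving in $\T_l(G)$ over the trivial bound \emph{amplifies} as $l$ grows: the $l$--fold convolution of the uniform measure on $G$ flattens, so that for every $\eps>0$ one has $\T_l(G)\le|G|^{2l}p^{-3+o(1)}$ once $l=l(\eps)$ is large enough. Your route --- one appeal to $\T_3$ followed by the crude monotonicity --- discards precisely this amplification, and that is the missing idea. (As a secondary matter, your claim that mean--subtraction cancels the $|G|^{2^{k+1}}$ contribution to $\|M_H\|_F^2$ is also not quite right: a direct computation of $\|M_H-c\|_F^2$ still leaves a term of order $|G|^{2^{k+1}}$, which again dominates $p\,\T_{2^k}(G)$ exactly when $|G|$ is small.)

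Finally, for the deduction of \eqref{f:B_any_A}: the hypothesis is $|B||C|\ge(|A||D|)^\eps$, not $|B||C|\ge p^\eps$, so one cannot specialise \eqref{f:B_any} directly. The paper notes that an additional rough incidence input (Lemma~\ref{l:k-transitive} or \cite{Brendan_rich}) is needed to handle the regime where $|B||C|$ is small compared to $p$.
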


The first part of Theorem \ref{t:B_any} is Lemma 53 from \cite{sh_as} and the second part follows by the same arguments (with $S=B$, $T=C$, $f_1(x) = A(x) - |A|/p$, $f_2 (x) = D(x)$, say) 
if one uses, in addition, any rough incidence result in $\SL_2 (\F_p)$ see, e.g.,   \cite{Brendan_rich} or  our Lemma \ref{l:k-transitive}. 
%say, 
%, e.g.,
%or
%our 
% Section \ref{sec:hyp_first}. 

Now if the sets $B$, $C$ in our sets $G_\la (B,C)$, $G_\la (B)$ are special, then one can improve Theorem \ref{t:B_any} as was done in [Theorem 11]\cite{NG_S} (or see the sketch of the proof of Theorem \ref{t:B_progr_new} below).

\begin{theorem}
	Let 
	%$\la \in \F_p^*$, 
	$A,D \subseteq \F_p$ be any sets,
	% $|A|=|D|$ 
	and 
	$B = 2\cdot [N]$, 
	%	$B = [N]$,
	% $C=[N]$,  
	%$5 \le 
	%$m:= \min\{N,M\} 
	$N \le p^{\tau}$, $0<\tau < 1/8$. 
	Suppose that $|D| \le p^{1-\d}$, $\d = C_1^{-1/\tau}$.
	Then  
	\begin{equation}\label{f:B_progr}
	|\{ (a+b) (b+d) = 1 ~:~ a\in A,\, b\in B,\, %c\in C,\, 
	d\in D \}| 
	\ll
	\sqrt{|A||D|} N \cdot N^{- C_2 \d/\tau} \,. 
	\end{equation}
	Here $C_1, C_2 >1$ are absolute constants.
	\label{t:B_progr}
\end{theorem}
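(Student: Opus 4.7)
The plan is to follow the asymmetric scheme of \cite[Theorem 11]{NG_S}. Writing $(a+b)(b+d) = 1$ as $g_b x = y$ with $g_b = u_b v_b \in \SL_2 (\F_p)$, $x \in -D$, $y \in A$, our quantity becomes
$$\sigma = \sum_{g \in G} \sum_x (-D)(x) A(gx), \qquad G := G_1 (B) = \{g_b : b \in B\}, \quad |G| = N.$$
Replacing $(-D)$ by its balanced function removes the main term $|A||B||D|/p$ and leaves an error term $\sigma_*$ to estimate.

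Next, apply Lemma \ref{l:actions} a bounded number $k$ of times with $f_1$ the balanced function of $-D$ and $f_2 = A$. After $k$ iterations one obtains
$$|\sigma_*|^{2^k} \le |D|^{2^{k-1}} |A|^{2^{k-1} - 1} \sum_g r_{(GG^{-1})^{2^{k-1}}}(g) \sum_x A(x) A(gx) \,.$$
Cauchy--Schwarz in $g$, combined with the high transitivity of the $\SL_2$-action on $\F_p \cup \{\infty\}$ (entering through Lemma \ref{l:k-transitive}), reduces the right--hand side to the higher energy $\T_{2^{k-1}}(G)$, and the task becomes to beat the trivial bound $\T_m (G) \le N^{4m-1}$ for $m = 2^{k-1}$.

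The key input is the ``Zaremba--type'' observation exploited in \cite{NG_S}: the set $B = 2\cdot [N]$ lifts to $\Z$, and the matrices $g_b$, $b \in 2\Z_{>0}$, generate essentially a free subsemigroup of $\SL_2 (\Z)$. Any word $g_{b_1} g_{b_2}^{-1}\cdots g_{b_{2m-1}} g_{b_{2m}}^{-1}$ corresponds to a continued fraction with even partial quotients, and these are in bijection with their evaluations in $\SL_2 (\Z)$. Consequently, in $\SL_2 (\Z)$ one has $\T_m (G) = N^{2m}$ rather than $N^{4m-1}$, a polynomial saving by a factor of order $N^{2m-1}$. Provided the entries of these words, which are of size $\lesssim N^{2m}$, stay below $p$, the no--collision property survives reduction modulo $p$, giving the same bound for $G \subset \SL_2 (\F_p)$.

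The main obstacle is quantitative: one must turn the structural bound $\T_m (G) \le N^{2m}$ into a \emph{polynomial} saving on $\sigma$ of the form $N^{-C_2 \delta/\tau}$. The constraint $N^{2m} \lesssim p$, i.e.\ $m \lesssim 1/(2\tau)$, sets the maximal depth of the iteration of Lemma \ref{l:actions}, and the choice $\delta = C_1^{-1/\tau}$ is calibrated to this budget. One has to track constants through the $k \sim \log (1/\tau)$ iterations and, where necessary, supplement the bookkeeping with a Helfgott--type growth or Balog--Szemer\'edi--Gowers amplification step to convert the energy bound into the sharp saving factor. The choice $B = 2\cdot [N]$ rather than $[N]$ is essential here: freeness of the semigroup fails with arbitrary positive partial quotients, and without it one loses the structural $\T_m$--estimate on which everything rests.
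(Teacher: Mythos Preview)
Your outline correctly identifies the action setup $g_b x = y$, the amplification via Lemma \ref{l:actions}, and the role of the free--subgroup structure behind $B = 2\cdot[N]$ (this is Lemma \ref{l:Lemma 27}). The constraint $m \lesssim 1/\tau$ coming from reduction modulo $p$ is also right. But the core of the argument is misplaced.

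Your step ``Cauchy--Schwarz in $g$ combined with Lemma \ref{l:k-transitive} reduces to $\T_m(G)$'' does not work in this asymmetric regime. If one bounds $\sum_g r_{(GG^{-1})^m}(g)\sum_x A(x)A(gx)$ via H\"older and $3$--transitivity, the resulting estimate carries an extra factor $|A|^{O(1/m)}$; with $|A|$ potentially as large as $p$ and $m \asymp 1/\tau$ this is $N^{O(1)}$, swamping any saving coming from the energy. (Incidentally, since $|G|=N$ the trivial bound is $\T_m(G)\le N^{2m-1}$, not $N^{4m-1}$, and Lemma \ref{l:Lemma 27} gives the stronger $\T_m(G)\ll C_*^m N^m$, not $N^{2m}$.)

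In the paper's argument --- the sketch for Theorem \ref{t:B_progr_new} is the template --- the energy bound is not fed into a transitivity estimate. It is used only to certify that the convolution measure $\nu = r_{(GG^{-1})^{2m}}$ is \emph{flat}: $\|\nu\|_\infty \le \|\nu\|_1/K$ with $K = N^{m/2}$, and likewise that $\nu$ does not concentrate on any proper subgroup of $\SL_2(\F_p)$. At that point the Bourgain--Gamburd expansion (equivalently \eqref{f:B_any} of Theorem \ref{t:B_any} applied with the weighted set of actions $\nu$, or \cite[Theorem 9]{NG_S}) is invoked, yielding
\[
\sum_g \nu(g)\sum_x D(x)D(gx) \ll \frac{|D|^2\|\nu\|_1}{p} + |D|\,\|\nu\|_1\, p^{-\eta}
\]
with $\eta>0$ an absolute constant. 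The hypothesis $|D|\le p^{1-\delta}$ controls the first term, and taking the $4m$--th root produces the saving $N^{-C_2\delta/\tau}$.

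So the Helfgott/Bourgain--Gamburd input you mention only in passing is not a ``supplement'' to the bookkeeping --- it is the step that actually does the work. The $\T_m$ bound merely prepares the measure $\nu$; $k$--transitivity alone cannot close the argument when $|A|,|D|$ are large relative to $N$.
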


\begin{corollary}
	Let $A \subseteq \F_p$ be a set.
	%,  and $N \le p^{\tau}$, $\tau <1/8$ be numbers. 
	Suppose that $|A|< p^{0.99}$ and $N\le p^{\tau_0}$, where $\tau_0 < 1/8$ is an absolute constant. 
	Then 
	%there are $i,j \in [N]$ 
	there is $i\in 2 \cdot [N]$ 
	such that 
	\begin{equation}\label{f:A_abs}
	\left|(A+i) \cap \frac{1}{A+i} \right| \ll 
	%\frac{|A|}{N^{c(\tau)}} \,,
	\frac{|A|}{N^{c}} \,,
	\end{equation}
	%	where $c (\tau)>0$ depends on $\tau$ only. 
	where $c>0$ is an absolute constant. 
	\label{c:A_abs} 
\end{corollary}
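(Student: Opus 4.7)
\textbf{Proof proposal for Corollary \ref{c:A_abs}.} The plan is to obtain the existence of a good $i$ by averaging Theorem \ref{t:B_progr} over $B = 2\cdot [N]$. The first step is to recognize the set $(A+i) \cap \frac{1}{A+i}$ in the language of the basic hyperbola equation. An element $x$ belongs to this intersection iff $x = a_1+i$ and $x = 1/(a_2+i)$ for some $a_1,a_2 \in A$ (with $a_2+i\neq 0$); since $a_1$ and $a_2$ are determined by $x$, one has
\[
\left|(A+i)\cap \tfrac{1}{A+i}\right| = \bigl|\{(a_1,a_2)\in A\times A ~:~ (a_1+i)(a_2+i)=1\}\bigr|
\]
up to the negligible contribution from $a_2+i=0$. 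Summing over $i\in B$ therefore yields
\[
\sum_{i\in B} \left|(A+i)\cap \tfrac{1}{A+i}\right| = \bigl|\{(a+b)(b+d)=1 ~:~ a\in A,\, b\in B,\, d\in A\}\bigr|,
\]
which is exactly the quantity bounded in Theorem \ref{t:B_progr} with $D=A$.

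Next, I would fix the constant $\tau_0$. Choose any $\tau_0 < 1/8$ small enough so that $\delta_0 := C_1^{-1/\tau_0}$ satisfies $\delta_0 \le 10^{-2}$; this is possible because $\delta_0 \to 0$ as $\tau_0 \to 0^+$, and it makes $\tau_0$ a bona fide absolute constant depending only on $C_1$. Under the hypothesis $|A|<p^{0.99}$ we then have $|A| \le p^{1-\delta_0}$, so the hypothesis of Theorem \ref{t:B_progr} (with $D=A$ and parameter $\tau = \tau_0$) is satisfied for any $N \le p^{\tau_0}$.

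The last step is just averaging. Theorem \ref{t:B_progr} applied with $D = A$ gives
\[
\sum_{i\in B} \left|(A+i)\cap \tfrac{1}{A+i}\right| \ll \sqrt{|A|\cdot |A|}\, N \cdot N^{-C_2 \delta_0/\tau_0} = |A|\, N^{1-c},
\]
where $c := C_2\delta_0/\tau_0 > 0$ is an absolute constant. Since $|B|=N$, the pigeonhole principle supplies some $i\in 2\cdot [N]$ with $|(A+i)\cap \tfrac{1}{A+i}| \ll |A|/N^{c}$, which is exactly \eqref{f:A_abs}.

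There is no substantive obstacle here: the content of Corollary \ref{c:A_abs} is entirely absorbed by Theorem \ref{t:B_progr}, and the only delicate points are the correct identification of the intersection with the hyperbola equation and the choice of $\tau_0$ small enough to make the side condition $|A|\le p^{1-\delta}$ automatic from $|A|<p^{0.99}$. Both are routine.
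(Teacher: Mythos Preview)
Your proposal is correct and is exactly the intended derivation: the paper states Corollary \ref{c:A_abs} without proof, as an immediate consequence of Theorem \ref{t:B_progr}, and your averaging argument over $i\in 2\cdot[N]$ together with the choice of $\tau_0$ small enough to force $C_1^{-1/\tau_0}\le 10^{-2}$ is precisely how one extracts it. The caveat about ``$a_2+i=0$'' is in fact unnecessary (such pairs never satisfy $(a_1+i)(a_2+i)=1$), but this does not affect the argument.
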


The proof of Theorem \ref{t:B_progr} uses the following result, see \cite[Lemma 27]{NG_S}.

%Finally, we recall a non--abelian result from \cite[Lemma 27]{NG_S}.

\begin{lemma}
	Let
	% $G=G(2\cdot[N],2\cdot[N])$. 
	$G=G(2\cdot[N]) \subset \SL_2 (\Z)$. 
	Then there is an absolute constant $C_*>0$  such that for an arbitrary integer $s$ the following holds
	\[
	\T_{s} (G) \ll C_*^{s} |G|^{s} \,. 
	\]
	In $\F_p$ the same is true for all $s$ such that 
	\begin{equation}\label{cond:Lemma 27}
	s\le \frac{1}{4} \log_N p \,.
	\end{equation} 
	\label{l:Lemma 27}
\end{lemma}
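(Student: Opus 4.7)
The plan is to exploit the classical freeness of the principal congruence subgroup $\Gamma(2) \subset \SL_2(\Z)$. Modulo $\{\pm I\}$, $\Gamma(2)$ is freely generated by the two parabolic elements $u_2 = \bigl(\begin{smallmatrix}1 & 2 \\ 0 & 1\end{smallmatrix}\bigr)$ and $u^*_2 = \bigl(\begin{smallmatrix}1 & 0 \\ 2 & 1\end{smallmatrix}\bigr)$, so every product of elementary matrices $u_a$, $u^*_b$ with $a, b \in 2\Z \setminus \{0\}$ has a \emph{unique} alternating reduced word representation. This rigidity is the engine of the argument.

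The first step is to unfold $\T_s (G)$. Writing $g_i = u_{a_i} v_{a_i}$ with $a_i \in 2\cdot[N]$ and using the identity $v_a v_b^{-1} = u^*_{a-b}$ (with $\lambda = 1$) from the beginning of Section~\ref{sec:hyp_first}, each pair collapses as
\[
g_{2j-1} g_{2j}^{-1} = u_{a_{2j-1}} u^*_{a_{2j-1}-a_{2j}} u_{-a_{2j}}.
\]
Taking $s = 2m$ (the odd case being analogous), the product of $s$ alternating factors becomes an alternating word
\[
W = u_{x_0} u^*_{y_1} u_{x_1} u^*_{y_2} \cdots u^*_{y_m} u_{x_m},
\]
with $x_0 = a_1$, $x_m = -a_{2m}$, $y_j = a_{2j-1} - a_{2j}$, and $x_i = a_{2i+1} - a_{2i}$ for $1 \le i \le m-1$. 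An analogous expansion with primed parameters applies on the right--hand side of the defining equation of $\T_s (G)$.

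In the \emph{generic case}, where all interior $x_i$ (for $1 \le i \le m-1$) and all $y_j$ are nonzero on both sides, freeness forces $W = \tilde W$ to imply termwise equality of parameter strings, which in turn uniquely recovers the $a_i$'s and gives $g_i = g_{s+i}$ for every $i$. This contributes exactly $|G|^s$. The \emph{degenerate} contributions correspond to patterns where some interior parameter vanishes, producing cancellations $u_0 = u^*_0 = I$ and a shorter reduced word. If $k$ interior parameters vanish on the left and $k'$ on the right, matching reduced forms forces $k = k'$; each collision $y_j = 0$ imposes $a_{2j-1} = a_{2j}$, absorbing one free $G$--coordinate. A direct combinatorial count over at most $\binom{2m-1}{k}^2$ position patterns, with $N$ collapsed-value choices per collision, gives the total contribution $\ll N^s \sum_{k} \binom{2m-1}{k}^2 \ll 4^s N^s$, yielding $\T_s (G) \ll C_*^s |G|^s$ with an absolute $C_*$.

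For the transition to $\SL_2 (\F_p)$, I would observe that every matrix arising in the reduced word $W$ is a product of at most $3s$ elementary matrices each with integer entries bounded by $2N$, so its entries have absolute value at most $(CN)^{3s}$ for an absolute $C$. The hypothesis $s \le \tfrac{1}{4} \log_N p$ forces $(CN)^{3s} < p$ (for $N$ large; the bounded--$N$ case is trivial), so the reduction $\SL_2 (\Z) \to \SL_2 (\F_p)$ is injective on the relevant set of matrices and the entire $\SL_2 (\Z)$ count transfers verbatim to $\F_p$. The main obstacle is the combinatorial bookkeeping of the degenerate case: one must verify that simultaneous collapses of several $y_j$ and interior $x_i$ positions do not create relations beyond those captured by reduced--word matching, and that the $\binom{2m-1}{k}^2$ bound on patterns really is of the right order. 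The $\F_p$ transfer is then routine, provided the explicit entry estimate is made quantitative enough to match the constant $\tfrac{1}{4}$ in the hypothesis.
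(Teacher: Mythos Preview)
The paper does not prove Lemma~\ref{l:Lemma 27}; it is quoted from \cite{NG_S}. Your approach---expanding the $\T_s$ equation into an alternating word in $u$'s and $u^*$'s and invoking freeness of the group generated by $u_2, u^*_2$---is exactly the mechanism the paper itself uses when it proves the closely related Lemma~\ref{l:Lemma 27_new} (see equations \eqref{f:eq_T}--\eqref{f:I_free} in the proof of Theorem~\ref{t:A_abs_Re_Z}). Your $\F_p$ transfer by bounding matrix entries is likewise the same device used in Lemma~\ref{l:Lemma 27_new}; note that the product on each side of the $\T_s$ equation has only $s+1$ elementary factors after merging (not $3s$), each with entries $O(N)$, so the entry bound is $(CN)^{s+1}$, and this is how the constant $\tfrac14$ (improvable to $\tfrac12-\eps$, see Remark~\ref{r:0.5}) arises.

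On the degenerate case, your instinct is right that this is where the work lies, and your sketch is not yet a proof. Two concrete issues. First, ``$k=k'$'' does not follow directly, because once one interior parameter vanishes and two adjacent syllables merge, the \emph{merged} exponent can itself vanish, triggering a further collapse; the reduced length of $W$ need not equal $2m+1-2k$. Second---and this is the point that distinguishes the one--parameter family from the two--parameter $G_\la(B,C)$ and is the reason the bound here is the genuinely stronger $C_*^s|G|^s$ rather than the $|B|^{3k-1}|C|^{3k}$ of Lemma~\ref{l:Lemma 27_new}---the one--parameter structure limits these cascades. Concretely, the alternating--sign partial sums of $(x_0,-y_1,x_1,-y_2,\dots)$ are exactly $a_1,a_2,\dots,a_{2m}\in 2\cdot[N]$, all nonzero; e.g.\ when $y_1=0$ one has $x_0+x_1=a_3\ne 0$, so no secondary collapse occurs at that spot. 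Exploiting this positivity (or arguing by induction on the syllable length of the reduced word) is what closes the degenerate count at $\ll C^s N^s$; without making this explicit, the combinatorial bookkeeping in your last paragraph is incomplete.
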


\begin{remark}
	Actually, one can check from the proof of \cite[Lemma 27]{NG_S}, see \cite[Theorem 25,29]{NG_S} that condition \eqref{cond:Lemma 27} can be replaced by 
	$s\le (\frac{1}{2} - \eps) \log_N p$ for any $\eps>0$ and sufficiently large $p$ and $N$.  
	This square root condition looks rather natural.  
	Further, in $\R$ one can consider  a more general family $G=G(\o \cdot[N]) \subset \SL_2 (\R)$, where $|\o|\ge 2$, see the proof of Theorem \ref{t:A_abs_Re_Z}. 
	\label{r:0.5}
\end{remark}

In the real setting one can easily calculate the constant $c$ from \eqref{f:A_abs} in a simple way.
Surprisingly, that our saving in the asymmetric case of sets of rather different 
%equal 
cardinality (when $|A|, |D|$ are large comparable to $N$, see below)
is better than the famous Szemer\'edi--Trotter Theorem gives us (of course it is because a pair of our sets are arithmetic progressions).
Although the focus of this paper is $\F_p$ we give a proof of this result here because its simplicity and because we will use some parts of the proof later.

\begin{proposition}
	Let $\la \neq 0$ be any number, $A \subset \R$ be a set and  let $N\ge 1$ be an integer. 
	%$N, M \ge 1$ be integers. 
	%$B\subset \Z$  be sets. 
	Then for $|\omega| \ge 2$ one has 
	\begin{equation}\label{f:A_abs_Re}
	|\{ (a+b) (b+d) = 1 ~:~ a\in A,\, d\in D,\,  b \in  \o \cdot [N] %[N] 
	\}| 
	\ll \sqrt{|A||D|} N \max\{ |D|^{-1/2}, N^{-1/5} \} \,.
	\end{equation}
	More precisely, if for a certain $l$ the following holds $|D|^2 \ge N^{l}$, then 
	\begin{equation}\label{f:A_abs_Re2}
	|\{ (a+b) (b+d) = 1 ~:~ a\in A,\, d\in D,\,  b\in \o \cdot [N] %[N] 
	\}| 
	\ll \sqrt{|A||D|} N^{2/3} \cdot |D|^{1/6l} \,.
	\end{equation}
	\label{p:A_abs_Re} 
\end{proposition}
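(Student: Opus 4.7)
The plan is to recast the equation $(a+b)(b+d)=1$ as an incidence problem for the set $G = G_1(\omega\cdot[N]) \subset \SL_2 (\R)$ of M\"obius transformations acting on $\R$: writing $g = u_b v_b$ for $b \in \omega\cdot[N]$ and relabelling $x = -d$, $y = -a$ as in \eqref{f:basic_eq}, the count becomes $\sigma = \sum_{g\in G}\sum_x D(x)\,A(gx)$ with $|G| = N$. This is exactly the setting of Sections \ref{sec:non-commutative}--\ref{sec:hyp_first}; in particular the specific one-parameter family $G$ is where Lemma \ref{l:Lemma 27} (see also Remark \ref{r:0.5}) applies, giving the sharp bound $\T_s(G)\ll C_*^s N^s$ for every integer $s\ge 2$ in the real setting.

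I then apply Lemma \ref{l:actions} with $f_1 = A$, $f_2 = D$ and an even integer $n$ (to be optimised) to get
$\sigma^n \leq |A|^{n/2} |D|^{n/2-1} \sum_g r_{(GG^{-1})^{n/2}}(g)\,|D\cap g^{-1}D|$.
The inner sum is estimated by H\"older's inequality with conjugate exponents $(3/2,3)$: the first factor $(\sum_g r(g)^{3/2})^{2/3}$ is controlled by interpolation between $\|r\|_1 = N^n$ and $\|r\|_\infty \leq \T_{n/2}(G) \leq (C_*N)^{n/2}$ (the latter from \eqref{f:r_infty} and Lemma \ref{l:Lemma 27}); the second factor $(\sum_{g\in \supp(r)} |D\cap g^{-1}D|^3)^{1/3}$ is bounded by a multiple of $|D|^2$ using the triple transitivity of the $\SL_2 (\R)$-action on $\R$ together with the quadratic identity $(a-x)(a-y)=\lambda$ recorded in the proof of Lemma \ref{l:T_3(G)} (which yields $|\{g\in G : gx=y\}|\leq 2$) and Lemma \ref{l:k-transitive}. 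Assembling these estimates and taking $n$-th roots produces an upper bound of shape $\sigma\ll \sqrt{|A||D|}\,|D|^{1/n}\,N^{2/3}$ from the dominant H\"older term, plus a trivial tail that must be beaten.

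The hypothesis $|D|^2 \geq N^l$ is exactly what ensures that the ``main'' term dominates when one takes $n = 6l$ (or the nearest even integer), and this choice yields the more precise bound $\sqrt{|A||D|}\,N^{2/3}|D|^{1/(6l)}$ claimed in \eqref{f:A_abs_Re2}. The cruder max-form bound \eqref{f:A_abs_Re} then follows by specialising $l$: for $|D|\geq N^{2/5}$ one selects $l$ so that $|D|^{1/(6l)}\ll N^{2/15}$, which produces the saving $N^{-1/5}$; for $|D|< N^{2/5}$ the saving $|D|^{-1/2}$ (i.e.\ the bound $\sqrt{|A|}\,N$) comes from a small-$n$ application, or equivalently from Cauchy--Schwarz applied directly to $\sigma = \sum_g \sum_x D(x) A(gx)$ combined with $|D|\le N^{2/5}$. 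The main obstacle will be obtaining both the $N^{2/3}$ exponent --- which forces H\"older with the cubic moment of $|D\cap g^{-1}D|$ rather than a plain Cauchy--Schwarz --- and the $|D|^{1/(6l)}$ factor simultaneously: this requires delicately balancing the structured $\T$-bound against the trivial tail in the H\"older estimate so that the former wins precisely under the stated hypothesis $|D|^2\geq N^l$.
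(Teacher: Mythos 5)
Your skeleton matches the paper's: rewrite the count as $\sigma=\sum_{g\in G}\sum_x D(x)A(gx)$ with $G=G(\o\cdot[N])$, apply Lemma \ref{l:actions}, feed in $\T_s(G)\ll C_*^s N^s$ from Lemma \ref{l:Lemma 27}/Remark \ref{r:0.5}, and run a dichotomy over the exponent. The gap is in the central estimate of $\sum_g \nu(g)w(g)$, where $\nu=r_{(GG^{-1})^{k}}$ and $w(g)=|D\cap g^{-1}D|$: the paper does this with the Szemer\'edi--Trotter theorem, and your two substitutes each lose a power. First, interpolating $\|\nu\|_{3/2}$ between $\|\nu\|_1=N^{2k}$ and $\|\nu\|_\infty\le\T_{k}(G)\ll (C_*N)^{k}$ gives only $\|\nu\|_{3/2}\ll N^{5k/3}$; the bound you need, $\|\nu\|_{3/2}\ll N^{4k/3}$, comes from Cauchy--Schwarz against the \emph{second} moment, $\sum_g\nu^{3/2}\le(\sum_g\nu^2)^{1/2}(\sum_g\nu)^{1/2}=\T_{2k}(G)^{1/2}N^{k}\ll N^{2k}$. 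With your $L^\infty$ endpoint the main term is $N^{5/6}$ per root, not $N^{2/3}$. Second, and more seriously, bounding $(\sum_g w(g)^3)^{1/3}\ll|D|^2$ by $3$--transitivity (Lemma \ref{l:k-transitive}) is exactly what the paper avoids here: the proposition is stated over $\R$ precisely so that Szemer\'edi--Trotter can be applied to the point set $D\times D$ and the curves $y=gx$, giving $\sum_{g\in S_\tau}w(g)\ll(|D|^2|S_\tau|)^{2/3}+|D|^2+|S_\tau|$, i.e.\ an effective cubic--moment bound $|D|^{4/3}$ in place of $|D|^2$. That extra $|D|^{-2/3}$ is the sole source of the factor $|D|^{1/6k}$ in \eqref{tpm:07.04.2019_2}; with $|D|^2$ you only get $|D|^{1/2k}$, and the dichotomy over consecutive $k$ then optimises to a saving of $N^{-1/9}$ at best (only $N^{-1/12}$ with the $L^\infty$ interpolation as written), not $N^{-1/5}$.

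The bookkeeping for \eqref{f:A_abs_Re2} is also off: you manufacture $|D|^{1/(6l)}$ by taking $n=6l$, but then the tail term is $\sqrt{|A||D|}\,N|D|^{-1/(6l)}$ and the hypothesis $|D|^2\ge N^l$ no longer forces the main term to dominate (one would need $|D|\ge N^l$). In the paper the exponent $1/6l$ appears already at level $k=l$, coming from the Szemer\'edi--Trotter factor $|D|^{4/3}$, and $|D|^2\ge N^{l}$ is precisely the dominance condition at that level. Finally, deducing \eqref{f:A_abs_Re} by ``specialising $l$'' in \eqref{f:A_abs_Re2} leaves the range $N^{2/5}\le|D|\le N^{1/2}$ uncovered (no integer $l$ satisfies both $|D|^2\ge N^l$ and $|D|^{1/(6l)}\le N^{2/15}$ there); the paper instead proves \eqref{f:A_abs_Re} directly by a ``first $k$ at which the tail bound holds'' argument, which you would need to reinstate.
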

\begin{proof}
	%	We consider a slightly more general situation $C=[M]$ because most of our arguments work in this case as well. Of course we will put $N=M$.   
	%	Multiplying \eqref{f:hyp} by two we can assume that, actually, $b \in 2\cdot [N]$.
	%, $c\in 2\cdot [M]$.
	Denote by $\sigma$ the left-hand side of \eqref{f:A_abs_Re} and let 
	%	$G=G(2\cdot [N], 2\cdot [M])$.  
	$G=G(\o \cdot [N])$.  
	Using Lemma \ref{l:actions}, we obtain that for any $k\ge 1$ the following holds 
	\[
	\sigma^{2k} \le |A|^{k} |D|^{k-1} \cdot \sum_g r_{(GG^{-1})^{k}} (g) \sum_x D (x) D (g x) \,.	
	\]
	Applying the Szemer\'edi--Trotter Theorem \cite{ST}, we see that either 
	\begin{equation}\label{tpm:07.04.2019_1}
	\sigma \ll |G| \sqrt{|A| |D|} |D|^{-1/2k}
	\end{equation} 
	or
	\begin{equation}\label{tpm:07.04.2019_2} 
	\sigma \ll |G|^{1/3} \sqrt{|A| |D|} |D|^{1/6k} \T_{2k} (G)^{1/6k} \,.
	\end{equation} 
	By Lemma \ref{l:Lemma 27} %(and here we use the fact that $N=M$) 
	(also, see  Remark \ref{r:0.5})
	we know that $\T_{s} (G) \ll C_*^{s} |G|^{s}$, where $C_*>0$ is an absolute constant and $s$ is an arbitrary integer. 
	Hence in the second case, we have
	\begin{equation}\label{tpm:07.04.2019_2+} 
	\sigma \ll |G|^{2/3} \sqrt{|A| |D|} |D|^{1/6k} \,.
	\end{equation}
	Now let $k$ be the first number such that \eqref{tpm:07.04.2019_1} takes place.
	We can assume that $k>1$ because otherwise we are done.
	Then \eqref{tpm:07.04.2019_2+} holds with $k-1\ge 1$. 
	Comparing  bounds \eqref{tpm:07.04.2019_1}, \eqref{tpm:07.04.2019_2+}, we obtain 
	\[
	\sigma \ll \sqrt{|A||D|} |G|^{\frac{3k-2}{4k-3}} \le \sqrt{|A||D|} |G|^{\frac{4}{5}} 
	%\,.
	\]
	as required. 
	To prove \eqref{f:A_abs_Re2} suppose that \eqref{tpm:07.04.2019_2+} does not hold with $k=l$. 
	Then by \eqref{tpm:07.04.2019_1}, we get
	\[
	|G|^{2/3} \sqrt{|A| |D|} |D|^{1/6l} \ll \sigma \ll |G| \sqrt{|A| |D|} |D|^{-1/2l}
	\]
	or, in other words, $|D| \ll |G|^{l/2} 
	%= (NM)^{l/2} 
	= N^{l/2}$ 
	and this contradicts with our assumption. 
	This completes the proof.
	$\hfill\Box$
\end{proof}

\bigskip 

\begin{example}
	Suppose that $Q$ is an arithmetic progression on length $2M$ and put\\  $A=Q \bigsqcup (\bigsqcup_{j=1}^N (Q^{-1} + j))$, $M\ge N$.
	Then it is easy to see that for any $i,j \in [N]$ the set  $(A-i) \cap (A-j)^{-1}$  contains $[M]$. 
	Also, choosing $Q$ in an appropriate way, we can assume that $|A| \sim N |Q|$. Hence our saving $1/5$ in  	\eqref{f:A_abs_Re} (or analogously  the saving ${1/3}$ in  \eqref{f:A_abs_Re2}) cannot be replaced by any number strictly 
	%less 
	greater 
	than $1$. 
\end{example}

Now we formulate an analogue of Proposition  \ref{p:A_abs_Re} for an arbitrary set $B \subset \Z$.

\begin{theorem}
	Let $A,D \subset \R$, $B,C\subset \Z$  be sets, and $\la \neq 0$ be any number.
	%, $|\la| \le |C|/4$. 
	Then 
	%	\begin{equation}\label{f:A_abs_Re_Z}
	\[
	|\{ (a+b) (c+d) = \lambda ~:~ a\in A,\, d\in D,\, b \in B,\, c\in C \}|
	\ll_{|\la|}
	\] 
	\begin{equation}\label{f:A_abs_Re_Z}
	\ll_{|\la|} 
	\sqrt{|A||D|} |B||C| \cdot \max\{ |D|^{-1/2}, \rho(B,C) \}  \,,
	\end{equation}
	%	where $c>0$ is an absolute constant. 
	where
	\[
	\rho(B,C) := \max_{k\ge 2} \left( \frac{\E^{+} (C)^{k-1} \E^{+} (B)^{k-2}}{|B|^{4k-6}|C|^{4k-4}}\right)^{1/(8k-6)} 
	\le
	(|B|^k |C|^{k-1})^{-\frac{1}{8k-6}} \,.
	\]
	More precisely, if for a certain $l$ the following holds $|D|^4 \ge |B|^{4l-2} |C|^{4l} \E^{+} (C)^{-l} \E^{+} (B)^{-l+1}$, then 
	\[
	|\{ (a+b) (c+d) = \lambda ~:~ a\in A,\, d\in D,\, b \in B,\, c \in C \}|
	\ll_{|\la|} 
	\]
	\begin{equation}\label{f:A_abs_Re_Z2}
	\ll_{|\la|} 
	(|B||C|)^{1/3} \sqrt{|A| |D|} \cdot |D|^{1/6l} (|B|^2 \E^{+} (C)^{l} \E^{+} (B)^{l-1})^{1/6l}  \,.
	\end{equation}
	\label{t:A_abs_Re_Z} 
\end{theorem}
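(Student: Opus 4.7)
The plan is to adapt the strategy of Proposition~\ref{p:A_abs_Re}, replacing the arithmetic-progression-specific input of Lemma~\ref{l:Lemma 27} by a direct bound on $\T_{2k}(G_\lambda(B,C))$ in terms of the additive energies $\E^+(B)$ and $\E^+(C)$, valid for arbitrary $B,C\subset\Z$.

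First I would make the same substitution as in Section~\ref{sec:hyp_first} to rewrite the count as $\sigma=\sum_{g\in G}\sum_x(-D)(x)\,A(gx)$ with $G=G_\lambda(-B,C)$. Applying Lemma~\ref{l:actions} with parameter $k\ge 1$ gives
\[
\sigma^{2k}\le|A|^k|D|^{k-1}\sum_g r_{(GG^{-1})^k}(g)\sum_x D(x)\,D(gx).
\]
Since $A,D\subset\R$, the Szemer\'edi--Trotter theorem in $\R^2$ applies to the family of hyperbolas $\{y=gx\}$, and a dyadic decomposition of the multiplicity $r_{(GG^{-1})^k}$, performed exactly as in Proposition~\ref{p:A_abs_Re}, produces the familiar dichotomy
\[
\sigma\ll|G|\sqrt{|A||D|}\,|D|^{-1/(2k)}\quad\text{or}\quad\sigma\ll|G|^{1/3}\sqrt{|A||D|}\,|D|^{1/(6k)}\T_{2k}(G)^{1/(6k)}.
\]
The first alternative at $k=1$ already supplies the $|D|^{-1/2}$ branch of the maximum in \eqref{f:A_abs_Re_Z}.

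The key new ingredient is the bound
\[
\T_{2k}(G_\lambda(B,C))\ll|B|^2\E^+(C)^k\E^+(B)^{k-1}+|C|^2\E^+(B)^k\E^+(C)^{k-1},
\]
which for $k=1$ is exactly Lemma~\ref{l:T_3(G)}. For general $k$, I would extend the matrix computation of Lemma~\ref{l:T_3(G)}: writing
\[
g_1g_2^{-1}\cdots g_{4k-1}g_{4k}^{-1}=u_{\tau_0}u^*_{\omega_1}u_{\tau_1}\cdots u^*_{\omega_{2k}}u_{\tau_{2k}},
\]
with each interior $\omega_i\in\lambda^{-1}(C-C)$ weighted by $r_{C-C}(\lambda\omega_i)$ and each interior $\tau_i\in B-B$ weighted by $r_{B-B}(\tau_i)$, and with $\tau_0,\tau_{2k}$ carrying the boundary variables $a_1,-a_{4k}\in-B$, the continuant recursion $r_n=r_{n-1}+s_{n-1}\omega_n$, $s_n=s_{n-1}+r_n\tau_n$ shows that the identity ``product$\,=\,e$'' imposes two linear equations that determine $\tau_0$ and $\tau_{2k}$ together with a single residual equation $r_{2k}=0$ which in turn pins down $\omega_{2k}$. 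Iteratively applying Cauchy--Schwarz to the remaining sum over $\omega_1,\tau_1,\ldots,\omega_{2k-1},\tau_{2k-1}$ collapses the weighted count into $|B|^2\E^+(C)^k\E^+(B)^{k-1}$; the degenerate case $s_{2k-1}=0$ (where the solve for $\omega_{2k}$ fails) is absorbed into the symmetric $B\leftrightarrow C$ term via the parallel calculation on the top row of the matrix.

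With this $\T_{2k}$ bound in hand, substituting $k=l$ into the Szemer\'edi--Trotter branch produces \eqref{f:A_abs_Re_Z2} precisely in the range of $|D|$ specified in the statement; maximizing over $l$ and combining with the trivial branch recovers \eqref{f:A_abs_Re_Z} with the optimized quantity recognized as $\rho(B,C)$. The main obstacle is the $\T_{2k}(G)$ bound itself: the continuant polynomials are intricate and the iterated Cauchy--Schwarz must be orchestrated carefully so as to extract exactly $k$ copies of $\E^+(C)$ and $k-1$ copies of $\E^+(B)$ without spurious losses, and the degenerate cases require a parallel top-row analysis that gives rise to the symmetric summand. Once the $\T_{2k}$ bound is established, the remainder of the argument is a direct adaptation of the proof of Proposition~\ref{p:A_abs_Re}.
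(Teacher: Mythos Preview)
Your overall framework matches the paper exactly: rewrite the count as a $G$-action, apply Lemma~\ref{l:actions}, feed the resulting sum $\sum_g r_{(GG^{-1})^k}(g)\sum_x D(x)D(gx)$ into the Szemer\'edi--Trotter dichotomy, and then everything reduces to an upper bound for $\T_{2k}(G_\la(-B,C))$ in terms of $\E^+(B)$ and $\E^+(C)$. Once such a bound is in hand, the deduction of \eqref{f:A_abs_Re_Z} and \eqref{f:A_abs_Re_Z2} proceeds exactly as you describe and as in Proposition~\ref{p:A_abs_Re}.

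The gap is in your proposed proof of the $\T_{2k}$ bound. The paper does \emph{not} attack the continuant equations directly. Instead it uses that $B,C\subset\Z$ in a structural way: set $m=2\lceil|\la|\rceil$, split $B$ into its even/odd parts and $C$ into its residue classes modulo $m$, and apply Corollary~\ref{c:non-abelian_norm} to reduce to $\T_{2k}$ on the pieces (this splitting is precisely the source of the $\ll_{|\la|}$ in the statement). On each piece the alternating product in \eqref{f:eq_T}--\eqref{f:I_free} becomes a word in the two unipotents
\[
\begin{pmatrix}1&2\\0&1\end{pmatrix},\qquad \begin{pmatrix}1&0\\t&1\end{pmatrix}\quad(|t|\ge 2),
\]
and these generate a \emph{free} subgroup of $\SL_2(\Z)$ by the ping--pong lemma. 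Hence the only solutions to \eqref{f:I_free} are the trivial ones where every letter vanishes, and counting these gives $|B|^2(\E^+(C))^k(\E^+(B))^{k-1}$ directly, with no Cauchy--Schwarz iteration and no degenerate-case analysis.

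Your continuant approach, by contrast, does not use integrality of $B,C$ at all and would, if it worked, prove a $\la$-uniform bound over arbitrary subsets of $\R$. That is suspicious, and indeed the argument as sketched has real obstacles: the equation ``product $=e$'' is a system of polynomial (not linear) relations among the $\omega_i,\tau_i$, your proposed ``solve for $\tau_0,\tau_{2k},\omega_{2k}$'' only determines three of the four matrix entries and leaves a nontrivial variety of degenerate configurations (not a single codimension-one case), and there is no evident mechanism by which an iterated Cauchy--Schwarz on the remaining free variables collapses to exactly $\E^+(C)^k\E^+(B)^{k-1}$ rather than a much weaker product of $\T^+_k$-type quantities. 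The free-group trick sidesteps all of this in one stroke; you should replace the continuant analysis with it.
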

\begin{proof}
	We use the arguments from the proof of Proposition \ref{p:A_abs_Re}.
	In particular, 	
	%using 
	applying 
	Lemma \ref{l:actions}, we see that 
	%we need 
	our aim is 
	to estimate the quantity $\T_{2k} (G_\la  (-B,C))$ as in  \eqref{tpm:07.04.2019_2} 
	but before we need  some preparations.  
	%	Multiplying \eqref{f:hyp} by two we can assume that, actually, $b,c \in 2\cdot \Z$.
	Put $m = 2\lceil |\la| \rceil$ and $G = G_\la (-B,C)$. 
	Split  $B$ onto odd/even numbers $B_0$, $B_1$, further, split $C \subset \Z$ 
	%or $B,C \subset \F_p$ it is just split this sets 
	onto congruence classes $C_j$ modulo $m$ and use Lemma \ref{l:T_2^k} or its consequence Corollary \ref{c:non-abelian_norm} to estimate $\T_{2k} (G)$ via $\T_{2k}$ on sets $G_\la (B_i,C_j)$.
	%  defined by these congruences.   
	In the notation of the beginning of Section \ref{sec:hyp_first}, we get
	\[
	g_1 g^{-1}_2 \dots g_{2k-1} g^{-1}_{2k} = u_{a_1} u^{*}_{\la^{-1} (b_2-b_1)} u_{a_3-a_2} \dots u^*_{\la^{-1} (b_{2k}-b_{2k-1})} u^{-1}_{a_{2k}} 
	=
	\bar{g}_1 \bar{g}^{-1}_2 \dots \bar{g}_{2k-1} \bar{g}^{-1}_{2k} 
	=
	\]
	\begin{equation}\label{f:eq_T}
	=
	u_{a'_1} u^{*}_{\la^{-1} (b'_2-b'_1)} u_{a'_3-a'_2} \dots u^*_{\la^{-1} (b'_{2k}-b'_{2k-1})} u^{-1}_{a'_{2k}}  \,,  
	\end{equation}
	where variables $a_j, a'_j$ and $b_j,b'_j$ are from $(-B),C$, correspondingly.   
	% \in B$. 
	%	Consider $\mu B$ instead of $B$ and $\mu A$ instead of $A$ and $\la \mu$ for any number $\mu$.
	%	Clearly, this does not change equation \eqref{f:A_abs_Re_Z}. 
	%	Now take $\mu = 2(\min_{b,b'\in B} |b-b'|)^{-1}$.
	%	Then the distance between elements of $\mu B$ is at least $2$. 
	%Another way to obtain a free subgroup in the case $B\subset \Z$ or $B \subset \F_p$, say,  is the following: 
	%	Now split our set $B$ as $B=B' \bigsqcup B''$, where $B'$ consists of even and $B''$ consists of odd numbers, correspondingly. 
	%	Write $G' = G(B',B')$ and $G'' = G(B'',B'')$. 
	%Then by Lemma \ref{l:T_2^k} it is enough to estimate  $\T_{2^k} (G (B',B'))$, $\T_{2^k} (G (B'',B'')).$
	% and both these quantities can be bounded similarly.
	%Indeed, 
	Further, 
	it is well--known (see, e.g., \cite{MKS_book}) that the matrices 
	\begin{equation}\label{f:matrices_st}
	\left( {\begin{array}{cc}
		1 & s \\
		0 & 1 \\
		\end{array} } \right)
	, \quad 
	\left( {\begin{array}{cc}
		1 & 0 \\
		t & 1 \\
		\end{array} } \right)	
	\end{equation}	
	generate a free subgroup of $\SL_2 (\Z)$, provided $|s|, |t| \ge 2$ or even when $|st| \ge 4$ (it easily follows from the ping--pong lemma).  
	%	Hence from \eqref{f:eq_T}, we see that 
	%	Another way to obtain a free subgroup in the case $B,C \subset \Z$ or $B,C \subset \F_p$ it is just split this sets onto odd/even numbers and use Lemma \ref{l:T_2^k}.   
	Rewriting \eqref{f:eq_T} as 
	\begin{equation}\label{f:I_free}
	(u^{*}_{\la^{-1} (b'_{2k}-b'_{2k-1})})^{-1} \dots (u^{*}_{\la^{-1} (b'_2-b'_1}))^{-1} u_{a_1-a'_1} u^{*}_{\la^{-1} (b_2-b_1)} u_{a_3-a_2} \dots u^*_{\la^{-1} (b_{2k}-b_{2k-1})} u_{a'_{2k-1}-a_{2k}}  = I 
	\end{equation}
	we see that the number solutions to the last equation is $|B|^2 (\E^{+} (C))^{k} (\E^{+} (B))^{k-1}$
	(for $k=1$ it coincides with the second bound of Lemma \ref{l:T_3(G)} in the symmetric case).
	Indeed, since $B_i-B_i \subset 2\cdot \Z$,  $C_j-C_j \subset m\cdot \Z$, it follows that all $u_a$ and $u^*_b$ are powers of matrices from \eqref{f:matrices_st} with $s=2$ and $|t| = |m/\la| \ge 2$, correspondingly, 
	and hence equation \eqref{f:I_free} has no nontrivial solutions. 
	Thus, as in \eqref{tpm:07.04.2019_1}, \eqref{tpm:07.04.2019_2}, we 
	%obtain 
	have 
	either 
	\begin{equation}\label{tpm:07.04.2019_1'}
	\sigma \ll |B||C| \sqrt{|A| |D|} |D|^{-1/2k}
	\end{equation} 
	or
	%, using Lemma \ref{l:k-transitive}, we get 
	%\[
	%	\sigma^{6k} \ll |A|^{3k} |D|^{3k+1} |G|^{2k} \T_{2k} (G) \,.
	%\]
	%In other words,
	\begin{equation}\label{tpm:07.04.2019_2'} 
	\sigma \ll (|B||C|)^{1/3} \sqrt{|A| |D|} |D|^{1/6k} (|B|^2 \E^{+} (C)^{k} \E^{+} (B)^{k-1})^{1/6k}
	%(\E^{+} (B))^{1/3} \,,
	\end{equation} 
	and hence as before 
	\[
	\sigma \ll \sqrt{|A| |D|} |B||C| \cdot \left( \frac{\E^{+} (C)^{k-1} \E^{+} (B)^{k-2}}{|B|^{4k-6}|C|^{4k-4}}\right)^{1/(8k-6)}
	%(\E^{+} (B) |B|^{-4})^{\frac{k-1}{4k-3}} 
	= \sqrt{|A| |D|} |B||C| \rho(B,C) 
	%	\le \sqrt{|A| |D|} |B||C|  (\E^{+} (B) |B|^{-4})^{1/5} 
	%\,.
	\]
	as required. 
	To obtain \eqref{f:A_abs_Re_Z2} we use the same calculations as in Proposition \ref{p:A_abs_Re}.
	This completes the proof.
	$\hfill\Box$
\end{proof}

%\begin{remark}
%	Of course, instead of taking $B\subset \Z$ it is enough to have the condition that there are $O(1)$ elements of $B$ in any segment of length two, say
%	(it easily follows from the ping--pong lemma).
%\end{remark}

\bigskip 

As we have seen the proof of Theorem \ref{t:A_abs_Re_Z} gives us an analogue  of Lemma \ref{l:Lemma 27}, which we formulate in $\Z$ and in  $\F_p$.  
Write $a^{+} := \max\{ a,1\}$.

\begin{lemma}
	Let 
	%$N$ be  a positive integer, 
	$B,C \subseteq \Z$, 
	$\la \neq 0$ be a real number.
	%$|\la| \le |C|/4$ and  
	Put 
	% $G=G_\la (2\cdot[N],2\cdot[N])$. 
	%$G=G(2\cdot[N])$. 
	$G=G_\la (B,C)$. 
	Then for an arbitrary integer $s$ the following holds
	\[
	\T_{2k} (G) \le  	(8|\la|^{+})^{4k} |C|^{3k} |B|^{3k-1}  \,. 
	\]
	Now, let  $B,C \subseteq [N] \subset \F_p$ and $\la \in \F^*_p$,
	$|\la|\le N^2$. 
	%, $|\lambda| \le N/4$.
	Then 
	\begin{equation}\label{cond:Lemma 27_new}
	%	(1+4N^2)^s \le \frac{p}{2} \,,
	\T_{2k} (G) \le (8|\la|)^{4k} \left( \frac{2(2N^2)^{2k}}{p} + 1\right)^4 |B|^{3k-1} |C|^{3k} \,.
	%	N^{3s-1} \,. 
	\end{equation} 
	%provided $|\lambda| \le N^2$. 
	\label{l:Lemma 27_new}	
\end{lemma}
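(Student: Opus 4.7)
The plan is to follow the scheme of Theorem~\ref{t:A_abs_Re_Z}: first split $B$ and $C$ into residue classes and apply the triangle inequality for the norm $\T_{2k}^{1/(4k)}$ (Corollary~\ref{c:non-abelian_norm}) to reduce to a single piece; on each piece collapse the $\T_{2k}$-equation to an identity of a word in alternating $u$- and $u^*$-matrices, as in the derivation of \eqref{f:I_free}, and invoke the ping-pong argument for the matrices \eqref{f:matrices_st}. The $\F_p$ version will then follow by lifting to $\Z$ and controlling fibres via Cauchy--Schwarz.

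For the $\Z$-bound, put $m := 2\lceil|\la|^+\rceil$, so that $|\la^{-1} m| \ge 2$, and split $B = B_0 \sqcup B_1$ by parity and $C = \bigsqcup_{j=0}^{m-1} C_j$ by residues modulo $m$; then $B_i - B_i \subseteq 2\Z$ and $\la^{-1}(C_j - C_j) \subseteq \la^{-1} m \Z$, and $G(B,C)$ is the disjoint union of the $2m$ pieces $G(B_i, C_j)$. By Corollary~\ref{c:non-abelian_norm},
\[
\T_{2k}(G(B,C)) \le (2m)^{4k} \max_{i,j} \T_{2k}(G(B_i, C_j)) \le (8|\la|^+)^{4k} \max_{i,j} \T_{2k}(G(B_i, C_j)) \,.
\]
On a fixed piece, the collapse carried out after \eqref{f:eq_T} in the proof of Theorem~\ref{t:A_abs_Re_Z} rewrites the $\T_{2k}$-equation as the free-group relation \eqref{f:I_free}. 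The matrices of \eqref{f:matrices_st} with $|s| \ge 2$ and $|t| = |\la^{-1} m| \ge 2$ generate a free subgroup of $\SL_2(\R)$ by the ping-pong lemma, so triviality of this word in the free group forces $k$ pairings among the $C$-differences, $k-1$ pairings among the internal $B$-differences, and leaves two free endpoint choices, yielding the count $|B_i|^2 \E^+(C_j)^k \E^+(B_i)^{k-1}$ exactly as in the derivation of \eqref{tpm:07.04.2019_2'}. The trivial bounds $\E^+(B_i) \le |B|^3$ and $\E^+(C_j) \le |C|^3$ then give $\T_{2k}(G(B_i, C_j)) \le |B|^{3k-1}|C|^{3k}$, which combined with the preceding display proves the $\Z$-statement.

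For the $\F_p$-statement, lift $a \in B$, $b \in C$ to $\{1, \dots, N\} \subseteq \Z$ and $\la$ to its canonical representative in $\{1, \dots, p-1\}$; since $|\la| \le N^2$, every $g_j$ and every $\mathrm{adj}(g_j) = \la g_j^{-1}$ is an integer matrix with entries of absolute value at most $2N^2$. Multiplying the $\F_p$-equation defining $\T_{2k}$ by $\la^k$ on each side converts it into a congruence modulo $p$ between two products of $2k$ such integer matrices, each such product being an integer matrix with entries of absolute value at most $M := (2N^2)^{2k}$. Let $r_{\F_p}(T_0)$ count tuples producing the $\F_p$-matrix $T_0$ and $r_\Z(T)$ count integer tuples producing the integer matrix $T$; then $r_{\F_p}(T_0) = \sum_{T \equiv T_0 \bmod p} r_\Z(T)$, and Cauchy--Schwarz combined with the elementary estimate $\#\{T : T \equiv T_0 \bmod p, \ \|T\|_\infty \le M\} \le (2M/p + 1)^4$ gives
\[
\T_{2k,\F_p}(G) = \sum_{T_0} r_{\F_p}(T_0)^2 \le \left(\frac{2M}{p} + 1\right)^{4} \T_{2k,\Z}(G) \,.
\]
Substituting the already-established $\Z$-bound delivers \eqref{cond:Lemma 27_new}.

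The substantive step will be the free-group reduction and its precise count $|B_i|^2 \E^+(C_j)^k \E^+(B_i)^{k-1}$: this requires careful tracking of how consecutive $u$- and $u^*$-factors cancel in the free reduction, but the bookkeeping is exactly that of the proof of Theorem~\ref{t:A_abs_Re_Z}. Once this count is in hand, the $\F_p$ extension is a routine lift-and-Cauchy--Schwarz manoeuvre with the entry bound $M = (2N^2)^{2k}$.
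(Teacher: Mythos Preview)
Your argument for the $\Z$--statement is exactly the paper's: split $B$ by parity and $C$ modulo $m=2\lceil|\la|\rceil$, apply Corollary~\ref{c:non-abelian_norm}, and on each piece use the free--group identity \eqref{f:I_free} to obtain the count $|B_i|^2\,\E^{+}(C_j)^{k}\,\E^{+}(B_i)^{k-1}$, which one then bounds trivially.

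For the $\F_p$--statement your route differs from the paper's. The paper also lifts to $\Z$ and bounds the entries of the products by $(2N^2)^{2k}$, but then it fixes the shift matrix $s$ and the tuple $(g'_1,\dots,g'_{2k})$ and bounds, via the free--group uniqueness behind \eqref{f:I_free}, the number of tuples $(g_1,\dots,g_{2k})$ hitting the fixed target by $|B|^{k-1}|C|^{k}$; multiplying by $|G|^{2k}$ choices for the primed tuple and $(2(2N^2)^{2k}/p+1)^4$ choices for $s$ gives the bound. You instead compare the $\F_p$-- and $\Z$--counts directly by Cauchy--Schwarz on the fibres of reduction mod $p$, obtaining $\T_{2k,\F_p}(G)\le(2M/p+1)^4\,\T_{2k,\Z}(G)$ and then plugging in the first part. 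This is a genuine, and somewhat cleaner, alternative: it bypasses the separate $\|r\|_\infty$--type estimate and reuses the $\Z$--bound as a black box. The paper's route, on the other hand, makes explicit that the representation function itself is small pointwise (not just in $\ell^2$), which is a slightly stronger intermediate statement. Both give the same final inequality. One small remark: to get the entry bound $M=(2N^2)^{2k}$ on the product you should use the submultiplicative operator $\ell^2$--norm (as the paper does) rather than the max--entry norm, since the latter is not submultiplicative for $2\times 2$ matrices.
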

\begin{proof}
	Take $m = 2\lceil |\la| \rceil \le 4|\la|^{+}$ and split $B$ onto odd/even numbers and $C$ onto congruence classes modulo $m$.
	Using Corollary \ref{c:non-abelian_norm} and calculations in \eqref{f:eq_T}, \eqref{f:I_free}, we obtain
	\[
	\T_{2k} (G) \le (2m)^{4k} |B|^2  (\E^{+} (C))^{k} (\E^{+} (B))^{k-1} \le (8|\la|^{+})^{4k} |B|^2  (\E^{+} (C))^{k} (\E^{+} (B))^{k-1} 
	\le 
	(8|\la|^{+})^{4k} |C|^{3k} |B|^{3k-1} %\,. 
	\] 
	as required. 
	
	%	We need to prove bound \eqref{cond:Lemma 27_new} only.
	%Indeed 
	Now let us obtain bound \eqref{cond:Lemma 27_new} and again we split $B,C$ modulo two and $m$, correspondingly,  but before let us remark that
	by the definition of the set   $G$
	%=G_\la (2\cdot[N],2\cdot[N])$ 
	the operator $l^2 (\R^2)$--norm of any element of $g\in G$ is 
	\[
	\| g\| := \sup_{\|x\|_2 =1} \|gx\|_2 \le  \sqrt{1+|a|^2 + |b|^2 + (|ab| +|\lambda|)^2}  \le \sqrt{1+ 2 N^2 + (N^2+N^2)^2} \le 2N^2 \,.
	\]
	Hence $\| g_1 \dots g_{2k} \| \le (2N^2)^{2k}$ and $g_1 \dots g_s \equiv g'_1 \dots g'_{2k} \pmod p$ implies that 
	\begin{equation}\label{f:Garaev_g}
	g_1\dots g_{2k} = g'_1 \dots g'_{2k} + s p \,,
	\end{equation}
	where for a matrix $s$ one has $p \|s\| \in [-(2N^2)^{2k}, (2N^2)^{2k}]$	(see similar arguments in \cite{Margulis}, \cite{Gamburd}, \cite{CG}).
	Clearly, there are at most $(2(2N^2)^{2k} p^{-1} + 1)^4$ of such matrices $s$. 
	Fixing $s$ and $g'_1, \dots, g'_{2k} \in G$ in \eqref{f:Garaev_g}, we need to solve this equation in $g_1,\dots, g_{2k}$.
	Thanks to \eqref{f:I_free} there are at most $|B|^{k-1} |C|^{k}$ choices for   $g_1,\dots, g_{2k}$. 
	This completes the proof.
	$\hfill\Box$
\end{proof}

\begin{remark}
	Notice that there is a  universal way to estimate the energy $\T_{2k} (G_\la (B,C))$ in any field, namely, by Lemma \ref{l:T_2^k} we always have
	$\T^2_{2k} (G_\la (B,C)) \le \T^{+}_{4k} (B)\T^{+}_{4k} (C)$. 
	Again it connects the problem about incidences for hyperbolas with ordinary additive energies of sets.  
\end{remark}

The same arguments as in the proof of Theorem \ref{t:B_progr}, Theorem  \ref{t:A_abs_Re_Z} and Lemma \ref{l:Lemma 27_new} (or see the proof of \cite[Theorem 24]{NG_S}) 	 
%(and the last remark) 
give us an  analogues result for {\it subsets} of $[N]$ and for our two--parametric family of transformations from $\SL_2 (\F_p)$.

\begin{theorem}
	Let $\la \in \F_p^*$, $A,D \subseteq \F_p$ be any sets, %$|A|=|D|$ 
	and $B, C \subseteq [N]$, $|B||C| \ge N^\eps$, 
	%, $C\subseteq [N]$, 
	%$m:= \min\{N,M\} 
	$N \le p^{\tau}$, $0<\tau < 1/8$. 
	Suppose that $|D| \le p^{1-\d}$, $\d = C_1^{-1/\eps}$ and $|\la| \le (|B||C|)^{1/8}/8$.
	Then  
	\begin{equation}\label{f:B_progr_new}
	|\{ (a+b) (c+d) = \lambda ~:~ a\in A,\, b \in B,\, c\in C,\, d\in D \}| 
	\ll
	\sqrt{|A| |D|} |B||C| \cdot (|B||C|)^{- C_2 \d/\eps} \,. 
	\end{equation}
	Here $C_1, C_2 >1$ are absolute constants.
	\label{t:B_progr_new}
\end{theorem}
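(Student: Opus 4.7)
The plan is to follow the strategy of Theorem~\ref{t:B_progr}, Proposition~\ref{p:A_abs_Re}, and Theorem~\ref{t:A_abs_Re_Z}, substituting Lemma~\ref{l:Lemma 27_new} for the progression-specific Lemma~\ref{l:Lemma 27}. First I would rewrite $(a+b)(c+d) = \la$ exactly as in the proof of Theorem~\ref{t:hyp_incidences}: setting $y = a \in A$, $x = -d \in -D$, $a' = -b \in -B$, $b' = c \in C$, the equation becomes $y = gx$ with $g = u_{a'}v_{b'} \in G := G_\la(-B,C) \subseteq \SL_2(\F_p)$. Let $\sigma$ denote the count of solutions; passing to balanced functions $f_{-D}$ and $f_A$ subtracts off the main term $|A||B||C||D|/p$ and leaves $\tilde\sigma = \sum_{g\in G}\sum_x f_{-D}(x)f_A(gx)$ as the object to bound.

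Next, apply Lemma~\ref{l:actions} in its finite-group form for a positive even integer $2k$:
\[
|\tilde\sigma|^{2k} \le |D|^k |A|^{k-1}\sum_g r_{(GG^{-1})^k}(g)\, I(g),
\qquad I(g) := \sum_x f_A(x)\overline{f_A(gx)}.
\]
Cauchy--Schwarz on the outer sum, together with $\sum_g r_{(GG^{-1})^k}(g)^2 = \T_{2k}(G)$, gives $\bigl(\sum_g r(g) I(g)\bigr)^2 \le \T_{2k}(G) \sum_{g \in \SL_2(\F_p)} I(g)^2$. A short bilinear computation using the 2-transitive action of $\SL_2(\F_p)$ on $\mathbb{P}^1(\F_p)$ evaluates $\sum_{g\in\SL_2(\F_p)} I(g)^2 = (p^2-1)\|f_A\|_2^4 \le p^2 |A|^2$, so combining yields
\[
|\tilde\sigma|^2 \ll |A||D|\, p^{1/k}\, \T_{2k}(G)^{1/(2k)}.
\]

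Now invoke Lemma~\ref{l:Lemma 27_new}. The hypothesis $|\la|\le (|B||C|)^{1/8}/8$ absorbs $(8|\la|)^{4k}$ into the factor $(|B||C|)^{k/2}$, and restricting $k \le \tfrac14 \log p / \log(2N)$ (which is of order $1/\tau$ under $N\le p^\tau$) keeps the factor $(2(2N^2)^{2k}/p+1)^4$ bounded. The conclusion is $\T_{2k}(G) \ll (|B||C|)^{7k/2}$, so $\T_{2k}(G)^{1/(4k)} \ll (|B||C|)^{7/8}$, and substituting back produces
\[
|\tilde\sigma| \ll \sqrt{|A||D|}\,(|B||C|)^{7/8}\, p^{1/(2k)}.
\]

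Finally, I would optimise over $k$. Choosing $k$ at the top of its admissible range and using $|B||C| \ge N^\eps$ with $N\le p^\tau$ converts $p^{1/(2k)}$ into a slowly growing power of $|B||C|$; combined with the $(|B||C|)^{7/8}$ factor this yields the target exponent $(|B||C|)^{1 - C_2\d/\eps}$ provided the constants $C_1, C_2$ in $\d = C_1^{-1/\eps}$ are calibrated correctly. The separated main term $|A||B||C||D|/p$ is at most $\sqrt{|A||D|}\,|B||C|\cdot p^{-\d/2}$ under $|D| \le p^{1-\d}$, and this is also dominated by the target bound for the same choice of $\d$. The principal obstacle is precisely this bookkeeping step: the upper constraint $k \lesssim 1/\tau$ from Lemma~\ref{l:Lemma 27_new} and the requirement that the residual factor $p^{1/(2k)}$ be beaten by $(|B||C|)^{1/8}$ force a delicate relation between $\d$, $\eps$, and $\tau$, and it is exactly this relation that dictates the double-exponential calibration $\d = C_1^{-1/\eps}$.
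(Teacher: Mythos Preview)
Your argument breaks down at the Cauchy--Schwarz step. After applying Lemma~\ref{l:actions} you bound
\[
\Bigl(\sum_g r_{(GG^{-1})^k}(g)\,I(g)\Bigr)^2 \le \T_{2k}(G)\sum_{g\in\SL_2(\F_p)} I(g)^2,
\]
and the right--hand side picks up the full factor $(p^2-1)\|f_A\|_2^4$. This introduces $p^{1/(2k)}$ into your final bound $|\tilde\sigma|\ll\sqrt{|A||D|}(|B||C|)^{7/8}p^{1/(2k)}$. Now track the constraints: Lemma~\ref{l:Lemma 27_new} forces $(2N^2)^{2k}\lesssim p$, i.e.\ $k\lesssim 1/(4\tau)$, whence $p^{1/(2k)}\gtrsim p^{2\tau}\ge N^2\ge |B||C|$. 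Your bound is then at least $\sqrt{|A||D|}(|B||C|)^{15/8}$, which is worse than trivial. Equivalently, to beat $(|B||C|)^{1/8}$ you would need $p^{1/(2k)}\le (|B||C|)^{1/8}\le N^{1/4}\le p^{\tau/4}$, i.e.\ $k>2/\tau$, contradicting $k\lesssim 1/(4\tau)$. No calibration of $\d,\eps,\tau$ can close this gap; the Cauchy--Schwarz over all of $\SL_2(\F_p)$ is simply too lossy.

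The paper's proof avoids this loss entirely. It fixes $m$ with $N^{4m}\sim p$, forms $\nu=r_{(GG^{-1})^{2m}}$, and uses Lemma~\ref{l:Lemma 27_new} together with $|\la|\le(|B||C|)^{1/8}/8$ to obtain the flatness estimate $\|\nu\|_\infty\le |G|^{4m}/K$ with $K=|G|^{m/2}$ (and the analogous bound on proper subgroups). It then feeds $\nu$ into the Bourgain--Gamburd--type expansion machinery (Theorem~\ref{t:B_any}, equivalently \cite[Theorem~9]{NG_S}), which converts flatness of a measure on $\SL_2(\F_p)$ directly into a saving $p^{-\eta}$ with $\eta\asymp 2^{-\log p/\log K}$. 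That iteration---not a single Cauchy--Schwarz---is the essential analytic input, and it is precisely what produces the double--exponential dependence $\d=C_1^{-1/\eps}$. Your proposal identifies Lemma~\ref{l:Lemma 27_new} correctly but replaces the expansion step with a crude $L^2$ bound; to repair it you must invoke Theorem~\ref{t:B_any} (or its underlying flattening/expansion lemma) at the point where you currently apply Cauchy--Schwarz.
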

{\par \noindent \hbox{\rm S\,k\,e\,t\,c\,h\, o\,f\, t\,h\,e\, p\,r\,o\,o\,f.}~}
Let $\sigma$ be the left--hand side of \eqref{f:B_progr_new} and $G=G_\la (-B,C)$. 
We take $m$ such that $N^{4m} \sim p$ and thus by Lemma \ref{l:Lemma 27_new} one has $\T_{2m} (G) \ll (8 |\la|)^{4m} |G|^{3m}$. 
Considering $\nu (g) = r_{(GG^{-1})^{2m}} (g)$, we have by estimate \eqref{f:r_infty} that $\| \nu \|_\infty \ll (8 |\la|)^{4m} |G|^{3m}$
and similar for the  intersection of $\nu$ with any proper subgroup of $\SL_2 (\F_p)$, see \cite{BG}, \cite{NG_S}.  
By Lemma \ref{l:T_2^k} one has
\[
\sigma^{4m} \le |A|^{2m} |D|^{2m-1} \cdot \sum_g r_{(GG^{-1})^{2m}} (g) \sum_x D (x) D (g x) = 
|A|^{2m} |D|^{2m-1} \cdot \sum_g \nu (g) \sum_x D (x) D (g x)\,.
\]
Put $K = |G|^{m/2}$.
Thanks to our condition $|\la| \le (|B||C|)^{1/8}/8$, we have $\| \nu \|_\infty \ll (8 |\la|)^{4m} |G|^{3m} \le |G|^{4m} / K$ 
and similar for the  intersection of $\nu$ with any proper subgroup of $\SL_2 (\F_p)$.
Then by general expansion result in $\SL_2 (\F_p)$, see \cite[Theorem 9]{NG_S} or just formula of Theorem \ref{t:B_any}, we get
\[
\sigma^{4m} \ll \frac{|A|^{2m} |D|^{2m+1} |G|^{4m}}{p} + |A|^{2m} |D|^{2m} |G|^{4m} p^{-\eta} \,,
\]	
where $\eta \ll 2^{-k}$ and $k \ll \log p/\log K \sim \log N/\log |G| \le \eps^{-1}$.
The first term in the last formula is negligible because  our assumption $|D| \le p^{1-\d}$. 
Thus, our saving is $p^{\eta/4m} \gg |G|^{C_2 \d/\eps}$.  
This completes the proof.
$\hfill\Box$

\bigskip

%Actually, one can probably relax the condition on $N$ slightly and 
We write Theorem \ref{t:B_progr_new} 
% this way to compare it with Theorem \ref{t:B_progr}.
similar to Theorem \ref{t:B_progr} for compare these two results. 
Of course constants $C_1,C_2$ in \eqref{f:B_progr_new} are worse than in \eqref{f:B_progr}.

\section{On bilinear forms of Kloosterman sums}
\label{sec:Kloosterman}

%\bigskip
%$\hfill\Box$

Let $\F$ be a finite field, $\a : \F \to \C$, $\beta : \F \to \C$ be two weights and let 
\[
K(n,m) = \sum_{x \in \F^*} e( nx + mx^{-1}) = K(mn, 1) 
\]
be the Kloosterman sum. 
We are interested in bilinear forms of Kloosterman sums \cite{KMS}--\cite{KMS_gen}, that is, expressions 
\[
S(\a,\beta) = \sum_{n,m} \a(n) \beta (m) K(n,m) \,.
\] 
Using the definition of the Fourier transform \eqref{F:Fourier}, we see that
\begin{equation}\label{f:S_Fourier}
S(\a,\beta) = \sum_x \FF{\a} (x) \FF{\beta} (x^{-1}) \,.
\end{equation}
From the Parseval identity \eqref{F_Par} and the Cauchy--Schwarz inequality, we obtain 
\begin{equation}\label{f:S_basic1}
S(\a,\beta) \le p \|\a\|_2 \|\beta\|_2 
\end{equation}
and applying usual upper bound for Kloosterman sum, as well as the Cauchy--Schwarz inequality again, we get
\begin{equation}\label{f:S_basic2}
S(\a,\beta) \le 2\sqrt{p} \|\a\|_1 \|\beta\|_1 \le  2\sqrt{p} \|\a\|_2 \|\beta\|_2 \sqrt{|\supp \a| |\supp \beta|}\,. 
\end{equation}
Both basic bounds \eqref{f:S_basic1}, \eqref{f:S_basic2} give $p^{3/2}$ for, say,  $\a$ and $\beta$ equal the characteristic function of some sets of sizes $\sqrt{p}$. 
This $p^{3/2}$ estimate is a kind of barrier and our task is to beat it for wide range of functions $\a$, $\beta$.

The next general  result 
%shows 
demonstrates 
that 
%the estimation of 
the quantity 
$S(\a,\beta)$  
%how the estimate
is connected with a sum--product question, namely, with the counting of incidences for some hyperbolas. 
Actually, even simple formula \eqref{f:S_Fourier} shows that this problem has the sum--product flavour.
Indeed, suppose for simplicity that $\a$ is the characteristic function of a progression, then the question about estimation of bilinear sums is equivalent to the problem how 
the inverse of a progression correlates with the set of 
large Fourier coefficients of $\beta$. 
%with the inverse of a progression.
In other words, 
it is a question about 
how additive and multiplicative structure agree.

\begin{theorem}
	Let $\a_1, \a_2, \beta_1, \beta_2  : \F_p \to \C$ be functions, $\eps>0$. 
	% and $A=\supp \a$, $B=\supp \beta$.
	Then either 
	\begin{equation}\label{f:d-est-}
	S(\a_1 \a_2,\beta_1 \beta_2) \lesssim p^{1/2+\eps} \min\{ \| \a_1 \|_2 \| \a_2 \|_2 \| \beta_1 \beta_2 \|_2, \| \beta_1 \|_2 \| \beta_2 \|_2 \| \a_1 \a_2 \|_2 \} 
	\end{equation}
	or
	\begin{equation}\label{f:d-est}
	S(\a_1 \a_2,\beta_1 \beta_2) \lesssim \frac{\| \a_1 \|_2 \| \beta_1 \|_2 \| \a_2 \|_1 \| \beta_2 \|_1}{p} + \| \a_1 \|_2 \| \beta_1 \|_2 \| \a_2\|_W \| \beta_2\|_W p^{1-\d}  \,,
	\end{equation}
	where $\d (\eps)>0$ depends on $\eps$ only. 
	%is a positive constant. 
	In particular, if 
	%$\a_2$, $\beta_2$ are the characteristic functions of some arithmetic progressions, then
	$\| \a_2\|_W, \| \beta_2\|_W \lesssim 1$ and if $\| \a_2\|^2_2$ or $\| \beta_2\|^2_2$ is at most $p^{1-c}$, $c>0$, then 
	% or 
	\begin{equation}\label{f:d-est'}
	S(\a_1 \a_2,\beta_1 \beta_2) \lesssim \| \a_1 \|_2 \| \beta_1 \|_2  p^{1-\d}  \,,
	\end{equation}
	where $\d (c) >0$ is a positive constant.
	Here the sign $\lesssim$ depends on $\log (\|\FF{\a}_1 \|_\infty \|\FF{\a}_2\|_\infty \|\FF{\beta}_1\|_\infty \|\FF{\beta}_2\|_\infty)$. 
	\label{t:Kloosterman}
\end{theorem}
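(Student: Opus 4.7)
The plan is to convert $S(\a_1\a_2,\beta_1\beta_2)$ into a weighted count of incidences on the modular hyperbola $(\xi_1+\xi_2)(\eta_1+\eta_2)=1$ and then to apply the expansion bound of Theorem~\ref{t:B_any} after a dyadic decomposition of $|\FF{\a_2}|$ and $|\FF{\beta_2}|$. Starting from \eqref{f:S_Fourier} together with the convolution identity $\FF{\a_1\a_2}=p^{-1}\FF{\a_1}*\FF{\a_2}$, one gets
\[
S(\a_1\a_2,\beta_1\beta_2)=\frac{1}{p^{2}}\sum_{(\xi_1+\xi_2)(\eta_1+\eta_2)=1}\FF{\a_1}(\xi_1)\FF{\a_2}(\xi_2)\FF{\beta_1}(\eta_1)\FF{\beta_2}(\eta_2)\,.
\]
Setting $g=u_{-\xi_2}v_{\eta_2}$, $f_1(x)=\FF{\beta_1}(-x)$, $f_2(y)=\FF{\a_1}(y)$, this is precisely the form $\sum_{g\in G}\sum_{x}f_1(x)f_2(gx)$ appearing in Theorem~\ref{t:B_any}, with $G=G_1(-B,D)$ carrying extra multiplicative weights $\FF{\a_2}(\cdot),\FF{\beta_2}(\cdot)$ on the acting group---a situation sanctioned by the remark after that theorem.

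Next I would dyadically decompose the Fourier magnitudes, defining $B_j=\{v:|\FF{\a_2}(v)|\in(2^{j-1},2^{j}]\}$ and analogously $D_k$; Parseval and definition \eqref{def:Wiener} supply the two key controls $2^{2j}|B_j|\le p\|\a_2\|_2^{2}$ and $\sum_{j}2^{j}|B_j|\le p\|\a_2\|_W$ (and similarly for $D_k$). For each pair $(j,k)$ I distinguish two cases. In the \emph{large} case $|B_j||D_k|\ge p^{\eps}$, the weighted form of Theorem~\ref{t:B_any} on $G_1(-B_j,D_k)$ with the bounded complex phases $2^{-j}\FF{\a_2}$, $2^{-k}\FF{\beta_2}$ (split into four real bounded parts if needed) yields an error of order
\[
p^{-2}\cdot 2^{j}|B_j|\cdot 2^{k}|D_k|\cdot\|\FF{\a_1}\|_2\|\FF{\beta_1}\|_2\cdot p^{-\d}=p^{-1-\d}\|\a_1\|_2\|\beta_1\|_2\cdot 2^{j}|B_j|\cdot 2^{k}|D_k|\,;
\]
summation over the $O(\log^{2}p)$ dyadic pairs, combined with $\sum_{j}2^{j}|B_j|\le p\|\a_2\|_W$, produces exactly the saving summand $\|\a_1\|_2\|\beta_1\|_2\|\a_2\|_W\|\beta_2\|_W\,p^{1-\d}$ of \eqref{f:d-est}. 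The accumulated main term $p^{-1}(\sum f_1)(\sum f_2)$ is then dominated by $\|\a_1\|_2\|\beta_1\|_2\|\a_2\|_1\|\beta_2\|_1/p$ via $p|\a_j(0)|\le\|\a_j\|_1$ and $\sum_v|\FF{\a_2}(v)|\le p\|\a_2\|_W$. In the \emph{small} case $|B_j||D_k|<p^{\eps}$, Theorem~\ref{t:B_any} is unavailable, so I instead apply the Weil bound \eqref{f:S_basic2} to the Fourier truncations of $\a_2,\beta_2$ to levels $B_j,D_k$: using $\|\a_2^{(j)}\|_1\le\sqrt{|B_j|}\|\a_2^{(j)}\|_2$ with $\|\a_2^{(j)}\|_2^{2}\le p^{-1}\cdot 2^{2j}|B_j|$, and summing, one reaches the alternative estimate \eqref{f:d-est-}.

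Finally, the particular case \eqref{f:d-est'} is immediate from the dichotomy: under $\|\a_2\|_W,\|\beta_2\|_W\lesssim 1$ and $\|\a_2\|_2^{2}\le p^{1-c}$, the bound \eqref{f:d-est} already reads $\lesssim\|\a_1\|_2\|\beta_1\|_2\,p^{1-\d}$ (using $\|\a_j\|_1\le p\|\a_j\|_W$ to control the first summand), while the alternative \eqref{f:d-est-} collapses to the same form once $\eps$ is chosen smaller than $c/2$. The main obstacle I anticipate is the rigorous verification of the weighted Theorem~\ref{t:B_any} for complex-valued weights on the acting group: although the remark after \eqref{f:B_any} explicitly sanctions this, implementing it for four arbitrary complex weights (splitting into real/imaginary bounded parts and invoking a union bound) and keeping the logarithmic losses absorbed into the $\lesssim$ symbol requires some careful bookkeeping; a secondary difficulty is to confirm that the accumulated main term from the large dyadic pieces actually matches the first summand of \eqref{f:d-est} without accruing spurious factors of $p$.
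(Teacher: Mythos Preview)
Your overall strategy---rewrite $S(\a_1\a_2,\beta_1\beta_2)$ via \eqref{f:S_Fourier} and the convolution law as a weighted hyperbola sum, dyadically decompose the Fourier magnitudes, and feed the large pieces into Theorem~\ref{t:B_any}---is exactly the route the paper takes. The paper in fact level--sets all four transforms $\FF{\a_1},\FF{\a_2},\FF{\beta_1},\FF{\beta_2}$ and pigeonholes to a single quadruple $(A,A',B,B')$, whereas you level--set only $\FF{\a_2},\FF{\beta_2}$ and sum; for the large case this difference is cosmetic and your Wiener--norm summation $\sum_j 2^j|B_j|\le p\|\a_2\|_W$ delivers the second term of \eqref{f:d-est} cleanly.

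The gap is in your treatment of the small case $|B_j||D_k|<p^{\eps}$. The inequality $\|\a_2^{(j)}\|_1\le\sqrt{|B_j|}\,\|\a_2^{(j)}\|_2$ is false: $B_j$ is the \emph{Fourier} support of $\a_2^{(j)}$, not the physical support, and a constant function ($|B_j|=1$) already violates it. The correct Cauchy--Schwarz bound is $\|\a_2^{(j)}\|_1\le\sqrt{p}\,\|\a_2^{(j)}\|_2$, and combining this with Weil and $2^{2j}|B_j|\le p\|\a_2\|_2^2$ yields only
\[
S_{(j,k)}\ \ll\ p^{1/2}\,\|\a_1\|_2\|\a_2\|_2\|\beta_1\|_2\|\beta_2\|_2\,,
\]
with a product of four $L^2$ norms rather than the $\|\beta_1\beta_2\|_2$ demanded by \eqref{f:d-est-}. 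This matters for \eqref{f:d-est'}, since the hypothesis $\|\beta_2\|_W\lesssim 1$ does not control $\|\beta_2\|_2$ (recall $\|f\|_W\le\|f\|_2$).

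The paper's remedy is to treat the small case \emph{asymmetrically}: after pigeonholing also the $\FF{\a_1}$--level set $A$ (at magnitude $\Delta$), one has
\[
S\ \lesssim\ \Delta\Delta' p^{-1}\sum_x (A*A')(x)\,|\FF{\beta_1\beta_2}(x^{-1})|\,,
\]
and a direct Cauchy--Schwarz in $x$, using $(A*A')(x)\le|A'|$ pointwise together with Parseval, gives $S^2\lesssim p^{1+\eps}\|\a_1\|_2^2\|\a_2\|_2^2\|\beta_1\beta_2\|_2^2$ when $|A'|\le p^{\eps}$. The point is that $\FF{\beta_1\beta_2}$ is kept \emph{undecomposed} here; your symmetric decomposition of both $\FF{\a_2}$ and $\FF{\beta_2}$ forfeits this, and the Weil bound cannot recover it. A minimal patch to your argument is: for each small pair $(j,k)$, bound instead by Cauchy--Schwarz against $\FF{\beta_1\beta_2}$ (or $\FF{\a_1\a_2}$) rather than invoking \eqref{f:S_basic2}.
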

\begin{proof}
	%Since $\a(x) = \a(x) A(x)$, $\beta(x) = \beta(x) B(x)$, it follows that via 
	By \eqref{f:S_Fourier} and \eqref{f:inverse}, applied for the convolution, we have
	\[
	S(\a_1 \a_2,\beta_1 \beta_2) = |\F|^{-2} \sum_x (\FF{\a}_1 * \FF{\a}_2)(x) (\FF{\beta}_1 * \FF{\beta}_2) (x^{-1}) \,.
	\]
	Let $A_j = \{ x ~:~ 2^{j-1} < |\FF{\a}_1 (x)| \le 2^{j} \}$, $A'_j = \{ x ~:~ 2^{j-1} < |\FF{\a}_2 (x)| \le 2^{j} \}$,
	$B_j = \{ x ~:~ 2^{j-1} < |\FF{\beta}_1 (x)| \le 2^{j} \}$, $B'_j = \{ x ~:~ 2^{j-1} < |\FF{\beta}_2 (x)| \le 2^{j} \}$.
	The number of such sets is  at most $L = 2\log (\|\FF{\a}_1 \|_\infty \|\FF{\a}_2\|_\infty \|\FF{\beta}_1\|_\infty \|\FF{\beta}_2\|_\infty)$ 
	and our sing $\lesssim$ below depends on this quantity.  
	By the pigeon--hole principle there are $j_1,j_2,j_3,j_4$ and sets $A_{j_1}$, $A'_{j_2}$, $B_{j_3}$, $B'_{j_4}$, which we denote as $A$, $A'$, $B$, $B'$
	and numbers $\D=2^{j_1}$, $\D' = 2^{j_2}$, $\rho=2^{j_3}$, $\rho' = 2^{j_4}$ such that
	\[
	S(\a_1 \a_2,\beta_1 \beta_2) \lesssim  \D \D' |\F|^{-1} \sum_x (A * A') (x) |\FF{\beta_1 \beta_2} (x^{-1})| \lesssim \D \D' \rho \rho'  |\F|^{-2} \sum_x (A * A') (x) (B * B') (x^{-1}) \,.
	\]
	If
	$|A'|$ (or $|B'|$) is
	at most $|\F|^\eps$, then  by \eqref{f:Wiener}, the Parseval and the previous formulae, we have 
	\[
	S^2(\a_1 \a_2,\beta_1 \beta_2) \lesssim (\D \D' |\F|^{-1} )^2 |\F| \|\beta_1 \beta_2 \|_2^2 |A| |A'|^2 
	\le
	|\F|^{1+\eps} \|\beta_1 \beta_2 \|_2^2 \| \a_1\|_2^2 \| \a_2 \|_2^2  
	\]
	%\[
	%	S(\a_1 \a_2,\beta_1 \beta_2) \lesssim \D \D'  |\F|^{-1+c} \| \beta_1 \beta_2\|_W
	%		\le 	
	%				\| \a_1 \|_1 \| \beta_1 \|_W \| \beta_2 \|_W   |\F|^{-1+2c} 
	%			\le
	%			\| \a_1 \|_2 \| \beta_1 \|_2 \| \beta_2 \|_W   |\F|^{2c} 
	%			\,.
	%\]
	and thus we obtain \eqref{f:d-est-}. 
	%it is better, than \eqref{f:d-est} for sufficiently small $c$ (we take $c=1/4$ for simplicity). 
	Now we use the fact that $\F$ equals $\F_p$ to apply our incidences results for hyperbolas. 
	Using 
	%the right--hand side of 
	bound \eqref{f:B_any} from  Theorem \ref{t:B_any}, as well as formula \eqref{f:Wiener}, we get 
	\[
	S(\a_1 \a_2,\beta_1 \beta_2) \lesssim \frac{\D \D' \rho \rho' |A| |B|}{p^3} + \| \a_1 \|_2 \| \beta_1 \|_2 \D' \rho' |A'| |B'| p^{-1-\d} 
	\ll
	\]
	\[ 
	\ll
	\frac{\| \a_1 \|_W \| \beta_1 \|_W \| \a_2 \|_1 \| \beta_2 \|_1}{p} +
	\| \a_1 \|_2 \| \beta_1 \|_2 \| \a_2\|_W \| \beta_2\|_W p^{1-\d}  
	\ll
	\]
	\[
	\ll
	\frac{\| \a_1 \|_2 \| \beta_1 \|_2 \| \a_2 \|_1 \| \beta_2 \|_1}{p} +
	\| \a_1 \|_2 \| \beta_1 \|_2 \| \a_2\|_W \| \beta_2\|_W p^{1-\d}  
	\]
	as required.

	Finally, inequality \eqref{f:d-est'} follows from \eqref{f:d-est-}, \eqref{f:d-est} by the inverse formula \eqref{f:inverse}, which gives 
	\[
	\| \beta_1 \beta_2 \|_2 \le \| \beta_2 \|_\infty \| \beta_1 \|_2 \le \| \beta_2 \|_W \| \beta_1 \|_2 \lesssim  \| \beta_1 \|_2 
	%\,.
	\]
	%	from Lemma \ref{l:P_norms} (for progressions) which says that $\| \a_2\|_W, \| \beta_2\|_W \lesssim 1$. 
	%	(and the same is for Bohr sets, say).
	and hence bound \eqref{f:d-est-} is negligible. 
	The first term in \eqref{f:d-est} is less than the second one, again because  \eqref{f:Wiener} (which gives $\| \a_2 \|_1$, $\| \beta_2 \|_1 \lesssim p$) and our assumption that 
	$\| \a_2\|^2_2$ or $\| \beta_2\|^2_2$ is at most $p^{1-c}$.  
	This completes the proof.
	$\hfill\Box$ 
\end{proof}

\bigskip

%Bound \eqref{f:d-est_intr} of Theorem \ref{t:Kloosterman_intr} follows from \eqref{f:d-est'} because if 

Once again the main advantage of our result is its generality. 
For example, one can easily 
%replace 
consider more general sets than arithmetic progressions in \eqref{f:d-est'}, say,  Bohr sets  of bounded dimension \cite{TV}. 
%It follows from the proof that $5/8$ can be replaced to $1/2+\eps$ and then $\d$ in  = \d(\eps)$.

It is easy to check that the last result is better than \cite[Theorem 7]{Shp_Sato}, as well as \cite[Theorem 1.17(2)]{FKM}. 
Thus, using relatively simple methods from Additive Combinatorics we break $p^{3/2}$ 
%the square root 
barrier in this  problem.

Now we obtain a result on bilinear Kloosterman sums in  a specific situation when the supports of the weights belong to arithmetical progressions. 
%Of course one can replace  the set $[N]+t_1$ below to rather arbitrary function with small Wiener norm in the spirit of Theorem \ref{t:Kloosterman}. 
%For example, in  \eqref{f:Kloosterman_NM_1} one can replace the main term $N^{19/48} M^{7/48} p^{23/24}$ to 
%$\|\FF{\a}\|_{L^{4/3}} N^{7/48} M^{7/48} p^{23/24}$ for the weight $\a$, $\supp \a \subseteq [N]+t_1$  and similar for the main term in \eqref{f:Kloosterman_NM_2}. 

\begin{theorem}
	Let 
	$\a : \F_p \to \C$, 
	$\beta : \F_p \to \C$  be a function, 
	$\supp \a \subseteq [N] + t_1$, 
	$\supp \beta \subseteq [M] +t_2$ and $t_1,t_2 \in \F_p$ be some shifts.  
	Then
	%	\begin{equation}\label{f:Kloosterman_NM}
	%		S(\a,\beta) \lesssim p^{1/6}  \|\FF{\a} \|_{4/3}  \| \beta \|_2 (MN)^{1/6} + (\E^{+} (\a))^{1/4} p^{3/4} + p \| \a \|_W \| \beta \|_W \,,
	%	\end{equation}
	%	and
	\begin{equation}\label{f:Kloosterman_NM_1}
	S(\a,\beta) \lesssim \| \beta \|_2 \left(\|\FF{\a}\|_{L^{4/3}} N^{7/48} M^{7/48} p^{23/24}   
	+  (\|\a\|_2 \| \a\|_1)^{1/2} p^{3/4} + \| \a \|_W  p \right) 
	\,,
	\end{equation}
	and if 
	%$M^2 <pN$, 
	$M^2 N^2 \|\FF{\a}\|^{12}_{L^{4/3}} < p \| \a\|_2^{12}$,  
	then  
	%and $t_1=t_2=0$, then 
	\begin{equation}\label{f:Kloosterman_NM_2}
	S(\a,\beta) \lesssim \| \beta \|_2 \left(\|\FF{\a}\|^{6/7}_{L^{4/3}} \|\a \|^{1/7}_2 N^{1/7} M^{1/7} p^{13/14} 
	+  (\|\a\|_2 \| \a\|_1)^{1/2} p^{3/4} + \| \FF{\a} \|_{L^{4/3}} p^{13/12} \right) \,.
	\end{equation}
	%	Finally, if $N=M \le p^{1/21}$ and $t_1=t_2=0$, then 
	%\begin{equation}\label{f:Kloosterman_NM_3}
	%S(\a,\beta) \lesssim \sqrt{p} \| \a\|_2 \| \beta \|_2 N^{63/64}  \,.
	%\end{equation}
	Here the sign $\lesssim$ depends on $\log (MN \|\FF{\a}\|_\infty \|\FF{\beta}\|_\infty)$. 
	\label{t:Kloosterman_NM}
\end{theorem}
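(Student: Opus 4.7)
The starting point is the Fourier identity \eqref{f:S_Fourier}, $S(\a,\beta) = \sum_{\xi \neq 0} \FF{\a}(\xi) \FF{\beta}(\xi^{-1})$. I apply Cauchy--Schwarz in the $m$--variable to obtain $|S(\a,\beta)|^2 \le \|\beta\|_2^2 \cdot T$, where $T = \sum_{m \in [M]+t_2} |K_\a(m)|^2$ and $K_\a(m) = \sum_n \a(n) K(n,m)$. Expanding the Kloosterman kernels and executing the $m$--summation via $\sum_{m \in [M]+t_2} e(m\xi) = e(t_2\xi)\FF{[M]}(-\xi)$, one arrives at
\[
T = \sum_{x,y\neq 0} \FF{\a}(-x)\, \ov{\FF{\a}(-y)} \, e(t_2(x^{-1}-y^{-1})) \, \FF{[M]}(y^{-1}-x^{-1}).
\]
Peeling off the diagonal $x=y$ (which contributes at most $Mp\|\a\|_2^2$), together with the subsidiary piece where $|\FF{[M]}|$ is comparable to $M$, produces after taking square roots and multiplying by $\|\beta\|_2$ the $(\|\a\|_2\|\a\|_1)^{1/2}p^{3/4}\|\beta\|_2$ and $\|\a\|_W p\|\beta\|_2$ pieces of \eqref{f:Kloosterman_NM_1}, using Lemma~\ref{l:P_norms} for $\|[M]\|_W \lesssim 1$.

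For the off--diagonal bulk, I dyadically decompose $|\FF{\a}(-x)|$ into level sets $A_\D = \{x\neq 0 : |\FF{\a}(-x)|\sim \D\}$ and $|\FF{[M]}(w)|$ into level sets $W_\rho$, where $|W_\rho|\lesssim p/\rho$ by Lemma~\ref{l:P_norms}. The off--diagonal portion of $T$ is then controlled, up to logarithmic factors, by
\[
\sum_{\D,\D',\rho} \D\D'\rho\cdot N(A_\D,A_{\D'},W_\rho), \quad N(A,B,W):=|\{(x,y,w)\in A\times B\times W : x^{-1}-y^{-1}=w\}|.
\]
For each $w\neq 0$ the equation $x^{-1}-y^{-1}=w$ factors as the hyperbola $(y+1/w)(1/w - x)=1/w^2$, precisely of the form \eqref{f:basic_eq}. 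Thus $N(A_\D,A_{\D'},W_\rho)$ is majorised by the four--set hyperbola incidence count of Corollary~\ref{c:hyp_incidences}, taken with hyperbola parameters $b = 1/w \in W_\rho^{-1}$ and $d = 1/w \in W_\rho^{-1}$ (so $|B|=|D|=|W_\rho|\lesssim p/\rho$), point variables $a = y \in A_\D$ and $-c = x\in A_{\D'}$, and the resulting varying parameter $\la = 1/w^2$ summed over $(W_\rho^{-1})^2$.

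Applying the second (main) branch of Corollary~\ref{c:hyp_incidences} yields a bound of shape $|A_\D|^{3/4}(p/\rho)^{41/48+1/2}|A_{\D'}|^{41/48}$; substituting Chebyshev's inequality $|A_\D|\le p\|\FF{\a}\|_{L^{4/3}}^{4/3}\D^{-4/3}$ and summing over $\D,\D'$ and the dyadic range $\rho \in [1,M]$, after optimisation, produces the incidence term $\|\FF{\a}\|_{L^{4/3}} N^{7/48} M^{7/48} p^{23/24}\|\beta\|_2$ in \eqref{f:Kloosterman_NM_1}. For \eqref{f:Kloosterman_NM_2} I instead apply the first branch $|A|^{1/4}|B||C||D|^{1/2}$ of Corollary~\ref{c:hyp_incidences}: the hypothesis $M^2 N^2 \|\FF{\a}\|_{L^{4/3}}^{12} < p\|\a\|_2^{12}$ is precisely the crossover threshold at which the $1/4$--branch beats the $3/4$--branch, giving the alternative exponents $1/7$, $6/7$, $13/14$ together with the $\|\FF{\a}\|_{L^{4/3}}p^{13/12}\|\beta\|_2$ error term. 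The main obstacle will be the delicate bookkeeping of the dyadic sums (and selecting the right dyadic weight against which to balance $\|\FF{\a}\|_{L^{4/3}}^{4/3}$) needed to land exactly on $7/48$ (resp.\ $1/7$, $13/14$), and checking that the side contributions from ``small--$|W_\rho|$'' and ``small--$\D$'' regimes collapse into the explicit error terms rather than spoiling the leading bound.
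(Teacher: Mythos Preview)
Your reduction has two genuine gaps that prevent it from reaching the stated bounds.

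\textbf{The hyperbola corollary is not applicable in your setup.} After Cauchy--Schwarz and level--set decomposition you arrive at the count $N(A_\D,A_{\D'},W_\rho)$ for the equation $(y+1/w)(1/w-x)=1/w^2$. Here $b=1/w$, $d=1/w$ and $\la=1/w^2$ are all determined by the \emph{single} parameter $w$; they do not range over four independent sets, and $\la$ is not fixed. Corollary~\ref{c:hyp_incidences} bounds $|\{(a,b,c,d)\in A\times B\times C\times D:(a+b)(c+d)=\la\}|$ for a \emph{fixed} $\la$, so at best you may apply it once per $w\in W_\rho$ and sum, which costs an extra factor $|W_\rho|$ and ruins the exponent. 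In fact, since $x^{-1}-y^{-1}=w$ is equivalent to the purely additive relation $X-Y=w$ with $X\in A_\D^{-1}$, $Y\in A_{\D'}^{-1}$, the hyperbola machinery brings nothing here. The paper avoids this by a different decomposition: one writes $\a=\a\cdot B$, $\beta=\beta\cdot C$ with $B=[N]+t_1$, $C=[M]+t_2$, so that $S=p^{-2}\sum_x(\FF{\a}*\FF{B})(x)(\FF{\beta}*\FF{C})(x^{-1})$, and after level--setting \emph{all four} transforms $\FF{\a},\FF{B},\FF{C},\FF{\beta}$ into sets $A,B',C',D$, one lands on the genuine four--set count $|\{(a+b')(c'+d)=1\}|$ with $\la=1$ fixed, to which Corollary~\ref{c:hyp_incidences} applies directly.

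\textbf{The parameter $N$ never enters your argument.} Your quantity $T$ involves only $\FF{\a}$ and $\FF{[M]}$; the hypothesis $\supp\a\subseteq[N]+t_1$ is never used, so there is no source for the factor $N^{7/48}$. In the paper this factor arises precisely from the level sets $B'$ of $\FF{[N]+t_1}$ via Lemma~\ref{l:P_norms}, giving $(\D_2^{48/41}|B'|)^{41/48}\ll p^{41/48}N^{7/48}$.

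Finally, bound \eqref{f:Kloosterman_NM_2} is not a ``crossover branch'' of Corollary~\ref{c:hyp_incidences}. The paper proves it by a separate smoothing step: one approximates $\FF{\a_0}$ and $\FF{\beta_0}$ by their averages over auxiliary progressions $Q_1,Q_2$ (with $L^2$ error $O(N^2Q_1^2/p+M^2Q_2^2/p)$), then applies the sharper Corollary~\ref{c:hyp_incidences_progr}, which exploits $\E^\times$ of progressions. Optimising $|Q_1|,|Q_2|$ under the constraints $|Q_1|N\ll p$, $|Q_2|M\ll p$ yields the exponents $1/7$, $6/7$, $13/14$ and the side term $p^{13/12}$; the hypothesis $M^2N^2\|\FF{\a}\|_{L^{4/3}}^{12}<p\|\a\|_2^{12}$ is exactly what makes those constraints satisfiable at the optimum.
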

\begin{proof}
	We can suppose that $N,M$ and $p$ are sufficiently large because otherwise the result is trivial. 
	Let us begin with \eqref{f:Kloosterman_NM_1}.
	Let $B$ and $C$ be the characteristic functions of the arithmetic progressions $[N]+t_1$ and $[M]+t_2$, respectively. 
	%	Also, let $\beta' (x) = \beta (-x)$. 
	Then for any weights $\a \subseteq B$, $\beta \subseteq C$ we can write $\a = \a B$ and $\beta = \beta C$. 
	After that we repeat the arguments of the proof of Theorem \ref{t:Kloosterman}. 
	%	but before notice that thanks to  \eqref{f:S_Fourier} and Lemma \ref{l:P_norms} the following holds 
	%\begin{equation}\label{tmp:13.04_-1}
	%	S(\a, \beta) \le p \| B \|_W \sum_x |\FF{B} (x)| |\FF{\beta} (x^{-1})|^2 \lesssim p \sum_x |\FF{\beta} (x^{-1})|^2 |\FF{B} (x)| = p S_* (\a,\beta) \,.
	%\end{equation}
	%Further, 
	Namely, 
	splitting the level sets of the functions 
	%$C-C,\beta * \beta', \a,B$, 
	$\FF{\a}, \FF{B}, \FF{C}, \FF{\beta}$, 
	we obtain sets $A$, $B',C',D$ and positive numbers $\D_1,\D_2,\D_3, \D_4$ such that 
	\begin{equation}\label{tmp:13.04_1}
	S (\a, \beta) (p^2 \D_1\D_2\D_3 \D_4)^{-1} \lesssim  
	\frac{|A||B'||C'||D|}{p} + |A|^{1/4} |B'||C'| |D|^{1/2} + 
	\end{equation}
	\begin{equation}\label{tmp:13.04_1-}
	+ |A|^{3/4} |D|^{1/2} (|B'||C'|)^{41/48} \,. 
	%		\left( 1+ \left(\frac{|B'||C'|}{p} \right)^{1/12} \right)  \,.	
	\end{equation}
	Here we have used Corollary \ref{c:hyp_incidences} 
	and again in \eqref{tmp:13.04_1} and below our sing $\lesssim$ depends on  
	%$L = \log (MN\|\FF{\beta}\|_\infty )$.
	$L = \log (MN \|\FF{\a}\|_\infty \|\FF{\beta}\|_\infty)$.
	We will show later that the first two terms in \eqref{tmp:13.04_1} give the last two terms in 
	%\eqref{f:Kloosterman_NM}, 
	\eqref{f:Kloosterman_NM_1} and now let us consider the third term in \eqref{tmp:13.04_1-}.  
	%are negligible. 
	From Parseval identity \eqref{F_Par}, we have 
	\begin{equation}\label{tmp:13.04_2'-}
	(\D^2_4 |D|)^{1/2} \le \sqrt{p} \| \beta \|_2 \,.
	\end{equation}
	%	and, similarly, 
	%\begin{equation}\label{tmp:13.04_2}
	%	(\D^{4/3}_1 |A|)^{3/4} \le \|\FF{\a} \|_{4/3} 
	%	%\le p^{3/4} \| \a \|^{4/3}_2 \,.
	%\end{equation}
	In view of Lemma \ref{l:P_norms}, we get 
	\begin{equation}\label{tmp:13.04_2'}
	(\D^{48/41}_2 |B'|)^{41/48} \ll p^{41/48} N^{7/48}
	\end{equation}
	and the same for $A$ and $C'$. 
	Thus, we obtain 
	%(one can check that the term $\left(\frac{|B'||C'|}{p}\right)^{1/12}$ in \eqref{tmp:13.04_1-} is negligible)
	\[
	S(\a,\beta) \lesssim 
	p^{23/24}   
	\| \beta \|_2 \|\FF{\a}\|_{L^{4/3}} (NM)^{7/48} 
	\]
	as required.
	It remains to show that two terms in \eqref{tmp:13.04_1} give the last two terms in  \eqref{f:Kloosterman_NM_1}.
	%	(the arguments for \eqref{f:Kloosterman_NM} are similar).   
	%are negligible. 
	In view of Lemma \ref{l:P_norms} and inequality  \eqref{f:Wiener} the first one gives 
	\[
	(\D_1\D_2\D_3 \D_4) \frac{|A||B'||C'||D|}{p^3} \lesssim  p \| \a \|_W \| \beta \|_W \ll p \| \a \|_W \| \beta \|_2
	\]
	and 	by the same lemma and the Parseval identity, as well as \eqref{f:energy_Fourier}, \eqref{tmp:13.04_2'-}, \eqref{tmp:13.04_2'}, we have for the second  term 
	\[
	(\D_1\D_2\D_3 \D_4) \frac{|A|^{1/4}|B'||C'||D|^{1/2}}{p^2} \lesssim p^{3/4} \E^{+} (\a)^{1/4} \| \beta \|_2 
	\lesssim  
	p^{3/4} (\|\a\|_2 \| \a\|_1)^{1/2} \| \beta \|_2 \,.
	\]

	Now let us prove \eqref{f:Kloosterman_NM_2}. 
	Let $Q_1$, $Q_2$ be symmetric arithmetic progressions with steps equal one such that 
	\begin{equation}\label{cond:Q_1,Q_2}
	|Q_1| N \ll p \,, \quad \quad |Q_2| M \ll p \,.
	\end{equation}
	Let $Z \subseteq \F_p$ be any set and write $\FF{\a}_{Z} (r):= \FF{\a} (r) Z(r)$ and similar for $\FF{\beta}$ 
	(in this part of the proof one can put, simply, $Z=\F_p$).
	% but we need a more general case later).   
	If $t_1 = 0$, then we have
	\[
	\| \FF{\a}_Z (r) - |Q_1|^{-1} (\FF{\a}_Z * Q_1) (r) \|_2^2 
	=
	|Q_1|^{-2} \sum_{r\in Z} \left| \sum_{s\in Q_1} (\FF{\a}_Z (r) - \FF{\a}_Z (r+s)) \right|^2
	=
	\]
	\[
	= |Q_1|^{-2} \sum_{r\in Z} \left| \sum_{x = 1}^N \a(x) e(-rx) \sum_{s\in Q_1} (1-e(-sx)) \right|^2 
	\le
	\]
	\[
	\le
	|Q_1|^{-2} \sum_{r\in \F_p} \left| \sum_{x = 1}^N \a(x) e(rx) \sum_{s\in Q_1} (1-e(sx)) \right|^2 
	=
	|Q_1|^{-2} 
	p \sum_{x = 1}^N |\a(x)|^2 \left| \sum_{s\in Q_1} (1-e(sx)) \right|^2 
	\]
	and because $Q_1$ is a symmetric set, as well as conditions \eqref{cond:Q_1,Q_2}, we obtain
	\[
	\| \FF{\a}_Z (r) - |Q_1|^{-1} (\FF{\a}_Z * Q_1) (r) \|_2^2 \ll p |Q_1|^{-2}  \sum_{x = 1}^N |\a(x)|^2 \left|\sum_{s\in Q_1} \frac{|s|^2 |x|^2}{p^2} \right|^2 
	\ll
	\| \a\|_2^2 \cdot \frac{N^4 Q^4_1}{p^3} \,.
	\]
	The same holds for $\beta$
	and below we will 
	%denote by  
	write 
	$\FF{\a} (r) = \FF{\a_0} (r) e(-t_1 r)$, where $\supp \a_0 \subseteq [N]$ and similar for $\beta$. 
	Clearly, $\| \a \|_{L^q} = \| \a_0 \|_{L^q}$, $\|\a\|_q = \|\a_0 \|_q$ and $\| \beta \|_{L^q} = \| \beta_0 \|_{L^q}$, $\|\beta\|_q = \|\beta_0 \|_q$ for any $q$. 
	Hence by \eqref{f:S_Fourier}, the Cauchy--Schwarz inequality and formula \eqref{F_Par}, we get 
	\[
	S(\a,\beta) = \sum_r \FF{\a_0} (r) e(-rt_1) 
	\FF{\beta} (r^{-1})
	= 
	|Q_1|^{-1} \sum_r (\FF{\a_0} * Q_1) (r) e(-rt_1) 
	\FF{\beta} (r^{-1}) + \|\a\|_2 \| \beta \|_2 \cdot  O\left( \frac{N^2 Q^2_1}{p} \right)
	%	=
	\]
	\[
	= 
	|Q_1|^{-1} \sum_r (\FF{\a_0} * Q_1) (r^{-1}) \FF{\beta_0} (r) e(-t_2 r^{} - t_1 r^{-1})
	+ \|\a\|_2 \| \beta \|_2 \cdot  O\left( \frac{N^2 Q^2_1}{p} \right)
	=
	\]
	\begin{equation}\label{f:S_Q_1,Q_2}
	%S(\a,\beta) 
	= (|Q_1| |Q_2|)^{-1} \sum_r (\FF{\a_0} * Q_1) (r) (\FF{\beta_0} * Q_2) (r^{-1})  e(-t_2 r^{-1} - t_1 r^{}) + \|\a\|_2 \| \beta \|_2 \cdot  O\left( \frac{N^2 Q^2_1}{p} + \frac{M^2 Q^2_2}{p} \right) \,.
	\end{equation}
	Now our task is to estimate the first sum in \eqref{f:S_Q_1,Q_2}, which we denote as $\sigma$. 
	As before splitting the level sets of the functions 
	$\FF{\a}_0$, $\FF{\beta}_0$,	we obtain sets $A$, $D$  and numbers $\D_1,\D_2$ such that 
	\begin{equation}\label{f:S_Q_1,Q_2+}
	\sigma \lesssim \D_1 \D_2 (|Q_1| |Q_2|)^{-1} \sum_r (A * Q_1) (r) (D * Q_2) (r^{-1}) \,.
	\end{equation}
	The trick with $\FF{\a}_{Z} (r)$, $\FF{\beta}_{Z} (r)$ allows us to choose $Q_1$, $Q_2$ not depending on the sets $A,D$ 
	%and we will need it at the third part of the proof. 
	(but here, actually,  we do not need in this additional information). 
	Applying Corollary \ref{c:hyp_incidences_progr}, we have 
	\begin{equation}\label{tmp:17.04_1}
	\sigma (\D_1 \D_2)^{-1} \lesssim \frac{|A||D|}{p} + 
	|A|^{1/4} |D|^{1/2} + |A|^{3/4} |D|^{1/2} (|Q_1||Q_2|)^{-1/6} \left( 1+ \left(\frac{|Q_1||Q_2|}{p} \right)^{1/12} \right)  \,.
	\end{equation}
	Again two first terms in the last formula do not exceed  two last terms in \eqref{f:Kloosterman_NM_2}. 
	Additionally, we consider the term $\left(\frac{|Q_1||Q_2|}{p} \right)^{1/12}$ later.
	As before, one has
	\[
	\D_1 \D_2 |A|^{3/4} |D|^{1/2} (|Q_1||Q_2|)^{-1/6} \ll p^{5/4} \| \FF{\a} \|_{L^{4/3}} \| \beta \|_2 (|Q_1||Q_2|)^{-1/6} \,.
	\]
	Using the last formula and recalling \eqref{f:S_Q_1,Q_2}, we see that the optimal choice of the parameters $Q_1,Q_2$ is 
	$NQ_1 = M Q_2$ and hence  $Q_1 = p^{27/28} M^{1/14} N^{-13/14} (\| \FF{\a} \|_{L^{4/3}} \| \a \|^{-1}_2 )^{3/7}$, 
	$Q_2 = p^{27/28} M^{-13/14} N^{1/14} (\| \FF{\a} \|_{L^{4/3}} \| \a \|^{-1}_2 )^{3/7}$.
	The 
	%choice 
	assumption 
	%$M^2 < pN$ 
	$M^2 N^2 \|\FF{\a}\|^{12}_{L^{4/3}} < p \| \a\|^{12}_2$ 
	guarantees that  conditions \eqref{cond:Q_1,Q_2} hold. 
	Thus 
	\[
	S(\a,\beta) \lesssim \| \beta \|_2 N^{1/7} M^{1/7}  \|\FF{\a}\|^{6/7}_{L^{4/3}} \|\a \|^{1/7}_2 p^{13/14} \,.
	\]
	Finally, let us consider the situation when the term  $\left(\frac{|Q_1||Q_2|}{p} \right)^{1/12}$ in \eqref{tmp:17.04_1} dominates.
	In this case $|Q_1| |Q_2| > p$ and
	\[
	\D_1 \D_2 |A|^{3/4} |D|^{1/2} (|Q_1||Q_2|)^{-1/6} \left(\frac{|Q_1||Q_2|}{p} \right)^{1/12}
	\le 
	p^{5/4}  \| \FF{\a} \|_{L^{4/3}} \| \beta \|_2 p^{-1/6} = p^{13/12}  \| \FF{\a} \|_{L^{4/3}} \| \beta \|_2
	\]
	as required. 	
	%This completes the proof.
	$\hfill\Box$
\end{proof}

\bigskip

%\begin{remark}
%	One can use Theorem \ref{t:B_progr} in the proof of the third part of  Theorem \ref{t:Kloosterman_NM} but it gives the same as Thereom \ref{t:Kloosterman}. 
%	Also, one can see from the proof of the third bound \eqref{f:Kloosterman_NM_3} that taking $N$ sufficiently small relatively to $p$, that is, $1 \ll N\le p^\eps$, we obtain the saving $\| \a \|_2 \| \beta \|_2 \sqrt{p} N^{2-1/8 + \d(\eps)}$, where $\d(\eps) \to 0$ as $\eps \to 0$.  
%\end{remark}

It was proved in \cite[Theorem 6.1]{BFKMM} that under some mild assumptions on $N$, $M$ and the case of the initial interval $[N]$ one has 
\[
S([N],\beta) \ll (\| \beta \|_1 \| \beta \|_2)^{1/2} p^{3/4+o(1)}  M^{1/12} N^{7/12} \,.
\] 
%For the critical case $M=N=p^{1/2}$ 
%%this bound 
%the saving here 
%coincides with \eqref{f:Kloosterman_NM_1}. 
Our bound \eqref{f:Kloosterman_NM_1} is better 
%for large $M$ 
(let $\beta (x) = [M](x)$ for simplicity)
%, namely, 
in the case when $p^{10} \ll M^{9} N^9$ and $M^2\gg N^{}$, $M^{4} N^7 \gg p^3$.  
Obviously, 
%our  
more precise estimate \eqref{f:Kloosterman_NM_2}  is even better.

\bigskip

\noindent{I.D.~Shkredov\\
	Steklov Mathematical Institute,\\
	ul. Gubkina, 8, Moscow, Russia, 119991}
\\
and
\\
IITP RAS,  \\
Bolshoy Karetny per. 19, Moscow, Russia, 127994\\
and 
\\
MIPT, \\ 
Institutskii per. 9, Dolgoprudnii, Russia, 141701\\
{\tt ilya.shkredov@gmail.com}

\end{document}